\newtheorem*{thm-no-num}{Theorem}
\newtheorem*{df-no-num}{Definition}
\newtheorem{thm}{Theorem} [section]
\newtheorem{prop}[thm]{Proposition} 
\newtheorem{lm}[thm]{Lemma} 
\newtheorem{cor}[thm]{Corollary} 
\theoremstyle{remark}
\newtheorem{rmk}[thm]{Remark}
\theoremstyle{definition} 
\newtheorem {df}[thm]{Definition}
\newcommand{\bA}{\mathbb{A}}
\newcommand{\PP}{\mathbb{P}}
\newcommand{\ZZ}{\mathbb{Z}}
\newcommand{\OO}{\mathcal{O}}
\newcommand{\cl}[1]{\mathcal{#1}}
\newcommand*{\sheafhom}{\mathcal{H}\kern -.5pt om}
\newcommand{\Gm}{\mathbb{G}_m}
\newcommand{\GLt}{\textnormal{GL}_3}
\newcommand{\PGLt}{\textnormal{PGL}_2}
\newcommand{\SLt}{\textnormal{SL}_2}
\newcommand{\ccan}{\omega_{C/S}}
\newcommand{\sm}{_{\rm sm}}
\newcommand{\pr}{{\rm pr}}
\begin{document}
\title[The Chow ring of the stack of hyperelliptic curves of odd genus]{The Chow ring of the stack of hyperelliptic curves of odd genus}
\author[A. Di Lorenzo]{Andrea Di Lorenzo}
\address{Scuola Normale Superiore, Piazza Cavalieri 7, 56126 Pisa, Italy}
\email{andrea.dilorenzo@sns.it}
\date{\today}

\begin{abstract}
	We find a new presentation of the stack of hyperelliptic curves of odd genus as a quotient stack and we use it to compute its integral Chow ring by means of equivariant intersection theory.
\end{abstract}
\maketitle
\tableofcontents
\section*{Introduction}
\noindent
There is a well defined intersection theory with integral coefficients for quotient stacks, first developed in \cite{EdiGra}, generalizing some ideas contained in \cite{Tot}. In \cite{EdiGra} the authors defined the integral Chow ring $A^*(\cl{X})$ of a smooth quotient stack $\cl{X}=[U/G]$, and they also showed that if $\cl{X}$ is Deligne-Mumford, then the ring $A^*(\cl{X})\otimes\mathbb{Q}$ coincides with the rational Chow ring of Deligne-Mumford stacks, whose notion had already been introduced in \cite{Gil,Mum,Vis89}.

Since then, some explicit computations of integral Chow rings of interesting algebraic stacks have been carried on: in \cite{EdiGra} the authors computed $A^*(\overline{\cl{M}}_{1,1})$, the integral Chow ring of the compactified moduli stack of elliptic curves, and Vistoli in the appendix \cite{Vis} computed $A^*(\cl{M}_2)$, the integral Chow ring of the moduli stack of curves of genus $2$.
Furthermore, among moduli stack of curves, the integral Chow ring of the stack of at most $1$-nodal rational curves had been computed in \cite{EdiFul08} and the integral Chow ring of the stack of hyperelliptic curves of even genus had been determined explicitly in \cite{EdiFul}.

The main goal of this paper is to compute the integral Chow ring of $\cl{H}_g$, the moduli stack of hyperelliptic curves of genus $g$, when $g\geq 3$ is an odd number. Our main result is the following:
\begin{thm-no-num}
	$A^*(\cl{H}_g)=\mathbb{Z}[\tau,c_2,c_3]/(4(2g+1)\tau,8\tau^2-2g(g+1)c_2,2c_3) $
\end{thm-no-num}
\noindent
We also provide a geometrical interpretation of the generators of this ring.

The content of the theorem above had already been presented in the paper \cite{FulViv}, but recently R.Pirisi pointed out a mistake in the proof of \cite{FulViv}*{lemma $5.6$} which is crucial in order to complete the computation (for a more detailed analysis of this, see lemma \ref{lm:2-divisibility} and the following remark). Actually, the content of corollary \ref{cor:generators im i} implies that the proof of \cite{FulViv}*{lemma $5.6$} cannot be fixed, because its consequences, in particular \cite{FulViv}*{lemma $5.3$}, are wrong.

The main difference between our methods and the ones in \cite{FulViv} consists of the presentation of $\cl{H}_g$ as a quotient stack that is used in order to carry on the equivariant computations. Indeed, in \cite{FulViv} the authors exploit a presentation that had been first obtained in \cite{ArsVis}, which involves the algebraic group $\PGLt\times\Gm$. 
The group $\PGLt$ is a non-special group, i.e. there exist $\PGLt$-torsors over certain base schemes that are not Zariski-locally trivial but only \'{e}tale-locally trivial.
A consequence of this fact, which can be interpreted in numerous distinct ways, is that in general equivariant computations involving $\PGLt$ may be hard to carry on. For instance, not every projective space endowed with an action of $\PGLt$ can be seen as the projectivization of a $\PGLt$-representation: this makes the computation of the $\PGLt$-equivariant Chow ring of $\PP^1$ a non trivial challenge.

On the other hand, equivariant computations involving a special group, i.e. a group $G$ such that every $G$-torsor can be trivialized Zariski-locally, are more approachable. An important example of special group is the general linear group ${\rm GL}_n$. A key result of the present work is the following theorem, where a presentation of $\cl{H}_g$ as a quotient stack with respect to the action of a special group is explicitly obtained:
\begin{thm-no-num}
	There exists a scheme $U'$ such that $\cl{H}_g=[U'/\GLt\times\Gm]$
\end{thm-no-num}

To obtain the presentation above, we introduce the notions of $\GLt$-counterpart of a $\PGLt$-scheme and of $\GLt\times\Gm$-counterpart of a $\PGLt\times\Gm$-scheme. More precisely, we give the following definitions:
\begin{df-no-num}
	Let $k$ be a field, and let $X$ be a scheme of finite type over ${\rm Spec}(k)$ endowed with a $\PGLt$-action. Then the $\GLt$-counterpart of $X$ is a scheme $Y$ endowed with a $\GLt$-action such that $[Y/\GLt]\simeq[X/\PGLt]$.
\end{df-no-num}
\begin{df-no-num}
	Let $k$ be a field, and let $X$ be a scheme of finite type over ${\rm Spec}(k)$ endowed with a $\PGLt\times\Gm$-action. Then the $\GLt\times\Gm$-counterpart of $X$ is a scheme $Y$ endowed with a $\GLt\times\Gm$-action such that $[Y/\GLt\times\Gm]\simeq[X/\PGLt\times\Gm]$. 
\end{df-no-num}
We show that every $\PGLt$-scheme (resp. $\PGLt\times\Gm$-scheme) admits a $\GLt$-counterpart (resp. $\GLt\times\Gm$-counterpart), by explicit construction. These results are then applied to produce a new description of $\cl{H}_g$ as a quotient stack: indeed, the presentation contained in \cite{ArsVis} is of the form
$$ \cl{H}_g\simeq [(\bA(1,2g+2)\setminus\Delta')/\PGLt\times\Gm] $$
where $\bA(1,2g+2)$ denotes the affine space of binary forms of degree $2g+2$, the closed subscheme $\Delta'$ is the hypersurface parametrising binary forms with multiple roots and the action is defined as
$$(A,\lambda)\cdot (f(x,y))=\lambda^{-2}\det(A)^{g+1}f(A^{-1}(x,y))$$
By describing explicitly the $\GLt\times\Gm$-counterpart of $\bA(1,2g+2)\setminus\Delta'$, we obtain the new presentation.

The fact that $\GLt\times \Gm$ is special enables us to use a set of new tools that were not available using the presentation of \cite{ArsVis}, and these new tools allows us to complete the computation of the integral Chow ring of the moduli stack of hyperelliptic curves of odd genus. 
\subsection*{Description of contents}
The whole paper is ideally divided in two parts, the first one that goes from section \ref{sec:prelim} to section \ref{sec:new pres} and is more stack-theoretical, the second one that goes from section \ref{sec:proj bundles} to the end which is more computational.

In section \ref{sec:prelim} we introduce the notion of $\GLt$-counterpart of a $\PGLt$-scheme, we prove the existence of a $\GLt$-counterpart for every $\PGLt$-scheme and some related results on equivariant Chow groups.
In section \ref{sec:new pres} we give a new presentation of $\cl{H}_g$, for $g\geq 3$ an odd number. We also introduce a class of vector bundles, denoted $V_n$, which will play a central role in the remainder of the notes.
In section \ref{sec:proj bundles}, we study some intersection theoretical properties of the projective bundles $\PP(V_n)$.
In section \ref{sec:chow 1} we begin the computation of the Chow ring of $\cl{H}_g$, obtaining the generators and some relations. 
Section \ref{sec:chow 2} is the technical core of the paper: here is where the new presentation of $\cl{H}_g$ obtained in the second section will prove to be particularly useful in order to find other relations for the Chow ring of $\cl{H}_g$.
The computation of $A^*(\cl{H}_g)$ is completed in section \ref{sec:chow 3}, where we also provide a geometrical interpretation of the generating cycles of $A^*(\cl{H}_g)$. For the convenience of the reader, a more detailed description of the contents can be found at the beginning of every section.

We assume the knowledge of the basic tools of equivariant intersection theory. An excellent survey of the techniques used in this paper may be found in \cite{FulVis}*{section $2$}. We adopt a notation which is slightly different from the one adopted in \cite{FulVis}: for us the ring $A^*_{\Gm^{\oplus n}}({\rm Spec}(k))$ is generated by the cycles $\lambda_i$, for $i=1,...,n$, and the Chern classes $c_i$ are the elementary symmetric polynomials in $\lambda_1,...,\lambda_n$ of degree $i$. In particular, this means that the $\GLt$-equivariant ring of a point is generated by $c_1$, $c_2$ and $c_3$. We will use this notation throughout the present work.
\subsection*{Acknowledgements}I wish to thank my advisor Angelo Vistoli for the huge amount of precious time spent talking with me on this and related subjects, and for his uncommon generosity. I also wish to thank Roberto Pirisi for his carefully reading of a preliminary version of this paper.
\section{Preliminaries on $\PGLt$-schemes}\label{sec:prelim}
Fix a base field $k$. We begin with the following definitions.
\begin{df}\label{def: GLt-counterpart of a scheme}
	Let $X$ be a scheme of finite type over ${\rm Spec}(k)$ endowed with a $\PGLt$-action. Then a $\GLt$-counterpart of $X$ is a scheme $Y$ endowed with a $\GLt$-action and an isomorphism $\varphi:[Y/\GLt]\simeq[X/\PGLt]$.
\end{df} 
\begin{df}\label{def:GLt counterpart of a morphism}
	Let $X$ and $X'$ be two schemes of finite type over ${\rm Spec}(k)$ endowed with a $\PGLt$-action, and let $f:X\to X'$ be a $\PGLt$-equivariant morphism. Then a $\GLt$-counterpart of $f$ is the datum of: 
	\begin{enumerate}
		\item[(i)] A $\GLt$-equivariant morphism $g:Y\to Y'$ between two schemes endowed with a $\GLt$-action.
		\item[(ii)] Two isomorphisms of quotient stacks $\varphi:[Y/\GLt]\simeq [X/\PGLt]$ and $\varphi':[Y'/\GLt]\simeq [X'/\PGLt]$.
	\end{enumerate}		
	such that the diagram
	$$\xymatrix{
		[X/\PGLt]\ar[r] \ar[d]^{\varphi} & [X'/\PGLt]\ar[d]^{\varphi'} \\
		[Y/\GLt]\ar[r] & [Y'/\GLt] }$$
	commutes, where the top arrow (resp. the bottom arrow) is induced by $f$ (resp. by $g$). 
\end{df}
For sake of readability, when dealing with $\GLt$-counterparts we will only write the scheme or the morphism, without explicitly describing the isomorphism, which will be clear from our construction of $\GLt$-counterparts (see the proof of theorem \ref{prop:counterpart exists}).

The existence of a $\GLt$-counterpart of a $\PGLt$-scheme has some consequences on equivariant Chow groups. Recall from \cite{EdiGra} that if $X$ is a scheme of finite type over ${\rm Spec}(k)$ on which an algebraic group $G$ acts, we can form the equivariant Chow groups $A_i^G(X)$, which can be shown to only depend on the quotient stack $[X/G]$, and thus can be thought as the integral Chow groups of $[X/G]$.

Moreover, if $X\to X'$ is a proper $G$-equivariant morphism between two schemes both endowed with a $G$-action, there is an induced pushforward morphism between $A^G_i(X)$ and $A^G_i(X')$, that seen as a morphism between the Chow groups of the quotient stacks $A_i([X/G])$ and $A_i([X'/G])$ coincides with the pushforward morphism induced by the representable morphism $[X/G]\to [X'/G]$. 

From this we deduce the following result:
\begin{thm}\label{cor:chow diagrams}
	Let $f:X\to X'$ be a $\PGLt$-equivariant proper morphism between two schemes of finite type over ${\rm Spec}(k)$ both endowed with a $\PGLt$-action, and let $g:Y\to Y'$ be its $\GLt$-equivariant counterpart. Then there exists a commutative diagram of equivariant Chow groups of the form
	$$\xymatrix {
		A^{\PGLt}_i(X) \ar[r]^{f_*} \ar[d]^*[@]{\cong} & A^{\PGLt}_i(X') \ar[d]^*[@]{\cong} \\
		A^{\GLt}_i(Y) \ar[r]^{g_*} & A^{\GLt}_i(Y') }$$
\end{thm}
The following theorem assures us that definitions \ref{def: GLt-counterpart of a scheme} and \ref{def:GLt counterpart of a morphism} are not useless:
\begin{thm}\label{prop:counterpart exists}
	Let $f:X\to X'$ be a proper $\PGLt$-equivariant morphism between $\PGLt$-schemes. Then the morphism $f$ always admits a $\GLt$-counterpart.
\end{thm}
In particular, the theorem above tells us that given a $\PGLt$-scheme $X$, we can always find a $\GLt$-counterpart $Y$. The remainder of this section is devoted to the proof of this statement.

Recall the definition of the moduli stack $\cl{M}_0$ of smooth curves of genus $0$ whose objects are relative schemes $(C\to S)$, where $C\to S$ is a smooth and proper morphism whose fibres are curves of genus $0$. For us, a curve over a field is a geometrically connected, proper scheme of dimension $1$. From now on, the relative scheme $C\to S$ will be called a \textit{family of rational curves}.

It is well known that $\cl{M}_0$ is an algebraic stack isomorphic to the classifying stack $\cl{B}\PGLt$, and thus isomorphic to the quotient stack $[{\rm Spec}(k)/\PGLt]$, where the action of the group on the point is the trivial one. In other words, the point ${\rm Spec}(k)$ is a $\PGLt$-torsor over $\cl{M}_0$. 

The next proposition gives us another presentation of $\cl{M}_0$ as a quotient stack, but before its statement we need to introduce some additional notation:  let $\bA(2,2)$ be the affine space parametrising trinary forms of degree $2$, and let $\cl{S}\subset \bA(2,2)$ be the open subscheme of $\bA(2,2)$ parametrising smooth trinary forms of degree $2$. Then we have:
\begin{prop} \label{cor:A_is_torsor}
	The scheme $\cl{S}$ is a $\GLt$-torsor over $\cl{M}_0$ with action defined as $ A\cdot q(\underline{X}):=\det(A)q(A^{-1}\underline{X}) $, where $\underline{X}=(X_0,X_1,X_2)$. In particular, there is an isomorphism $[\cl{S}/\GLt]\simeq \cl{M}_0$.
\end{prop}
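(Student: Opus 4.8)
The plan is to exhibit the morphism $\pi\colon\cl{S}\to\cl{M}_0$ sending a smooth ternary quadratic form $q$ over a base $S$ to the conic $V(q)\subset\PP^2_S$, regarded as a family of rational curves, and then to prove directly that $\pi$ is a $\GLt$-torsor; the isomorphism $[\cl{S}/\GLt]\simeq\cl{M}_0$ is the formal consequence that any $G$-torsor $T\to\cl{X}$ identifies $\cl{X}$ with $[T/G]$. First I would record that $\pi$ is well defined, since a smooth plane conic is proper and smooth with genus-$0$ fibres, and that $\pi$ is $\GLt$-invariant: the matrix $A$ induces an automorphism of $\PP^2_S$ carrying $V(q)$ isomorphically onto $V(A\cdot q)$, so $\pi(A\cdot q)\cong\pi(q)$ in $\cl{M}_0$. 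Hence $\pi$ descends to a morphism $[\cl{S}/\GLt]\to\cl{M}_0$.

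To verify that $\pi$ is a torsor I would check two things: that $\pi$ is faithfully flat, and that the natural morphism $\alpha\colon\GLt\times\cl{S}\to\cl{S}\times_{\cl{M}_0}\cl{S}$, $(A,q)\mapsto(q,A\cdot q)$, is an isomorphism. Faithful flatness holds because a smooth conic bundle becomes trivial after an \'etale cover of the base, so $\pi$ is smooth and surjective; indeed every genus-$0$ curve $C$ is recovered as a plane conic through its complete anticanonical system $|\ccan^{-1}|$, which is defined over the ground field with no rationality hypothesis. The fibre product $\cl{S}\times_{\cl{M}_0}\cl{S}$ parametrises triples $(q_1,q_2,\varphi)$ with $\varphi\colon V(q_1)\xrightarrow{\sim}V(q_2)$ an isomorphism of the associated rational curves, so that $\alpha$ being an isomorphism amounts to the rigidity statement that \emph{every isomorphism between two smooth conics is induced by a unique $A\in\GLt$ under the given action}.

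The key input is that a smooth conic is anticanonically embedded, so any isomorphism $\varphi\colon V(q_1)\to V(q_2)$ carries one anticanonical system onto the other and is therefore the restriction of a unique projective transformation $[A_0]\in\textnormal{PGL}_3$, uniqueness holding because a nondegenerate conic is not contained in any line. It then remains only to pin down the scalar: writing $q_1(A_0^{-1}\underline{X})=c\,q_2(\underline{X})$, a lift $\lambda A_0$ satisfies $(\lambda A_0)\cdot q_1=\lambda\det(A_0)c\,q_2$, so the equation $\lambda\det(A_0)c=1$ determines $\lambda$ uniquely. This is precisely the role of the twist by $\det(A)$: it rigidifies the $\Gm$ of scalars so that a projective automorphism lifts to a single element of $\GLt$. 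Running the same computation with $q_1=q_2=q_0$ shows that the stabiliser of a point is $\{A:\det(A)=1,\ A\in O(q_0)\}=SO(q_0)$, which via the exceptional isomorphism $SO_3\simeq\PGLt$ is exactly the automorphism group of the fibre; this is the conceptual reason the bookkeeping of a torsor works out despite a nontrivial stabiliser, these stabilisers being absorbed into the $\PGLt$-gerbe structure of $\cl{M}_0\simeq\cl{B}\PGLt$.

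I expect the main obstacle to be promoting the rigidity statement from geometric points to arbitrary base schemes and thereby producing an honest inverse morphism to $\alpha$, rather than a mere bijection (which would not suffice in positive characteristic). Concretely, given a family $(q_1,q_2,\varphi)$ over $S$ one must construct the transformation $A\in\GLt(S)$ functorially: this can be done by identifying $V(q_i)$ with $\PP(p_*\ccan^{-1})$ through the anticanonical embedding, using that the formation of $p_*\ccan^{-1}$ commutes with base change (as $h^1=0$ and $h^0=3$ are constant), so that the two planar realisations together with $\varphi$ yield an automorphism of $\OO_S^{\oplus 3}$; the $\det$-twist computation above then fixes it uniquely. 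Combined with the faithful flatness of $\pi$ and the dimension count $\dim\cl{S}-\dim\GLt=6-9=-3=\dim\cl{M}_0$, this identifies $\cl{S}\to\cl{M}_0$ as a $\GLt$-torsor. (One should also keep track of the standing hypothesis $\operatorname{char}k\neq2$, under which $q_0=X_0X_2-X_1^2$ is smooth and $SO(q_0)$ is a smooth group scheme.)
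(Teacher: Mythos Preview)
Your proposal is correct, and the underlying content is the same as the paper's, but the paper organizes the argument differently and thereby sidesteps the obstacle you flag at the end.

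Rather than verifying the torsor axioms for $\pi\colon\cl{S}\to\cl{M}_0$ directly, the paper introduces the frame functor
\[
\cl{E}(S)=\bigl\{(\pi:C\to S,\;\alpha:\pi_*\omega_{C/S}^{-1}\xrightarrow{\sim}\OO_S^{\oplus 3})\bigr\},
\]
which is \emph{manifestly} a $\GLt$-torsor over $\cl{M}_0$ (the group acts simply transitively on the trivializations $\alpha$), and then proves $\cl{E}\simeq\cl{S}$. The identification passes through an auxiliary functor $\cl{E}'$ of pairs (closed immersion $C\hookrightarrow\PP^2_S$, isomorphism $i^*\OO(1)\simeq\omega_{C/S}^{-1}$); the equivalence $\cl{E}\simeq\cl{E}'$ is the universal property of the anticanonical embedding (your observation about $\PP(p_*\omega^{-1})$ and base change), while $\cl{E}'\simeq\cl{S}$ comes from an ideal-sheaf computation showing that the datum $i^*\OO(1)\simeq\omega_{C/S}^{-1}$ is equivalent to an isomorphism $\cl{I}_C\simeq\OO(-2)$, i.e.\ to a choice of defining equation $q$. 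The $\det(A)$-twist then drops out of the adjunction isomorphism $\cl{I}\simeq\omega_{\PP^2_S}(1)$.

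What this buys is that the functorial inverse to your action map $\alpha$ never has to be written down: once $\cl{S}$ is identified with $\cl{E}$, the torsor structure is free. Your direct route is fine, and your diagnosis that the delicate point is promoting the rigidity from geometric points to families via base change for $p_*\omega^{-1}$ is exactly right --- that is precisely the content the paper packages into the functor $\cl{E}$. (One small remark: the $\operatorname{char}k\neq 2$ hypothesis is not imposed in this section of the paper; it only enters later for the hyperelliptic theory.)
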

The proof of this proposition is postponed to the end of the section, as for the moment we prefer to show how it can be applied to prove theorem \ref{prop:counterpart exists}.
\begin{proof}[Proof of th. \ref{prop:counterpart exists}]
	Let $X$ be a $\PGLt$-scheme. Then we can form the quotient stack $[X/\PGLt]$, which fits into the cartesian diagram
	$$ \xymatrix{
		X \ar[r] \ar[d] & [X/\PGLt] \ar[d] \\
		{\rm Spec}(k) \ar[r] & \cl{M}_0 } $$
	It is easy to check that the morphism of stacks $[X/\PGLt]\to \cl{M}_0$ is representable. We can form another cartesian diagram of the form
	$$ \xymatrix{
		Y \ar[r] \ar[d] & [X/\PGLt] \ar[d] \\
		\cl{S} \ar[r] & \cl{M}_0 } $$
	and the representability of the right vertical morphism implies that $Y$ is a scheme. Moreover, we see that $Y$ must be a $\GLt$-torsor over the stack $[X/\PGLt]$. In other terms, there is an isomorphism $[Y/\GLt]\simeq [X/\PGLt]$.
	
	If $X\to X'$ is a proper $\PGLt$-equivariant morphism between two schemes both endowed with a $\PGLt$-action, this induces a proper morphism of quotient stacks $[X/\PGLt]\to [X'/\PGLt]$. We can actually pull back this proper morphism along $\cl{S}\to \cl{M}_0$, obtaining in this way a proper $\GLt$-equivariant morphism $Y\to Y'$, where $Y'$ is the $\GLt$-counterpart of $X'$. In particular, the induced morphism $[Y/\GLt]\to [Y'/\GLt]$, composed in the obvious way with the two isomorphisms $[X/\PGLt]\simeq [Y/\GLt]$ and $[Y'/\GLt]\simeq [X'/\PGLt]$, is equal to the morphism $[X/\PGLt]\to [X'/\PGLt]$.
\end{proof}
\begin{rmk}
	There is another way to obtain the $\GLt$-counterpart of a $\PGLt$-scheme. Recall that if we have a $G$-torsor $X\to Z$ and a morphism of algebraic groups $\varphi:G\to H$, we can construct the associated $H$-torsor of $X$ as $X\times^{G} H=(X\times H)/G$, where the (right) action of $G$ is
	$$ g\cdot (x,h)=(xg,\varphi(g)^{-1}h) $$
	Consider now the morphism of algebraic groups $\PGLt\to\GLt$ induced by the adjoint representation of $\PGLt$. Then this morphism permits us to produce from the $\PGLt$-torsor $X\to [X/\PGLt]$ a $\GLt$-torsor
	$$ Y=X\times^{\PGLt} \GLt \longrightarrow [X/\PGLt] $$
	and it can be checked that $Y$ is the $\GLt$-counterpart of $X$.
\end{rmk}
Now we give a proof of proposition \ref{cor:A_is_torsor} stated at the beginning. We start with two technical lemmas:
\begin{lm} \label{lm:triv}
	Let $L$ be an invertible sheaf on a scheme $\pi:X\rightarrow S$ such that $\pi_*L$ is a globally generated locally free sheaf of rank $n+1$. Then giving an isomorphism $\pi_*L\simeq\mathcal{O}_S^{\oplus n+1}$ induces a morphism $f:X\rightarrow\mathbb{P}_S^n$ and an isomorphism $f^*\mathcal{O}(1)\simeq L$, and vice versa.
\end{lm}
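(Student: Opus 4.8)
The plan is to deduce the statement from the universal property of relative projective space, namely that morphisms $X\to\mathbb{P}_S^n$ over $S$ correspond bijectively (up to isomorphism of the target line bundle) to pairs consisting of an invertible sheaf $L$ on $X$ together with a surjection $\sigma\colon\mathcal{O}_X^{n+1}\twoheadrightarrow L$, the correspondence sending $f$ to $f^*\mathcal{O}(1)$ equipped with the pullback of the tautological surjection $\mathcal{O}_{\mathbb{P}_S^n}^{n+1}\twoheadrightarrow\mathcal{O}(1)$. Thus the whole content of the lemma is to convert the datum of a trivialisation of $\pi_*L$ into such a surjection, and back.

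For the forward direction I would start from an isomorphism $\varphi\colon\pi_*L\xrightarrow{\sim}\mathcal{O}_S^{n+1}$ and pull it back along $\pi$ to get $\pi^*\varphi\colon\pi^*\pi_*L\xrightarrow{\sim}\mathcal{O}_X^{n+1}$. Composing the inverse of $\pi^*\varphi$ with the counit (evaluation) map $\varepsilon\colon\pi^*\pi_*L\to L$ of the adjunction $(\pi^*,\pi_*)$ produces a morphism $\sigma\colon\mathcal{O}_X^{n+1}\to L$. The global generation hypothesis is what ensures that $\varepsilon$, and hence $\sigma$, is surjective, so $\sigma$ is the required line-bundle quotient and determines $f\colon X\to\mathbb{P}_S^n$ together with a canonical identification $f^*\mathcal{O}(1)\simeq L$.

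For the converse, given $f$ with a chosen isomorphism $f^*\mathcal{O}(1)\simeq L$, I would pull back the tautological surjection to obtain $\sigma\colon\mathcal{O}_X^{n+1}\twoheadrightarrow L$ and then apply $\pi_*$. Tracking the $n+1$ standard sections to the images of $f^*x_0,\dots,f^*x_n$ in $\Gamma(X,L)=\Gamma(S,\pi_*L)$ yields a morphism $\mathcal{O}_S^{n+1}\to\pi_*L$, and the key point is that it is an isomorphism: it is a map between locally free sheaves of the same rank $n+1$, so it suffices to prove surjectivity, which I would check after base change to the fibres (where it says the $f^*x_i$ span $H^0(X_s,L_s)$) together with Nakayama's lemma, the compatibility of $\pi_*L$ with base change being forced by the rank hypothesis. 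Finally I would verify that the two constructions are mutually inverse, which amounts to unwinding the adjunction identities relating $\varepsilon$ and the pullback of the tautological quotient.

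The main obstacle I anticipate is precisely this last isomorphism $\mathcal{O}_S^{n+1}\simeq\pi_*L$ in the converse direction: one must control the interaction of $\pi_*$ with pullback and base change, and it is there, rather than in the formal manipulation with the universal property, that the hypotheses that $\pi_*L$ be locally free of rank $n+1$ and globally generated genuinely enter.
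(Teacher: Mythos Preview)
Your approach is exactly what the paper has in mind; its proof simply declares the result ``standard'' and points to the canonical identification $\mathrm{pr}_{2*}\mathcal{O}_{\mathbb{P}^n_S}(1)\simeq\mathcal{O}_S^{n+1}$, which encodes the same universal property you invoke. Your forward direction is correct.

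There is, however, a genuine gap in your converse argument, and it is one the paper's one-line proof does not address either. You propose to show that $\mathcal{O}_S^{n+1}\to\pi_*L$ is surjective by checking on fibres that the $f^*x_i$ span $H^0(X_s,L_s)$. But the $n+1$ sections $f^*x_i$ are only guaranteed to \emph{generate} $L$ (no common zero), not to be linearly independent. For instance, with $X=\mathbb{P}^1_k$, $L=\mathcal{O}(2)$ (so $n=2$), the sections $x^2,\,y^2,\,x^2+y^2$ generate $L$ and define a morphism $f\colon\mathbb{P}^1\to\mathbb{P}^2$ with $f^*\mathcal{O}(1)\simeq L$, yet the induced map $k^3\to H^0(\mathbb{P}^1,\mathcal{O}(2))$ has rank~$2$.

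This is really an imprecision in the lemma as stated: the ``vice versa'' should be restricted to pairs $(f,\,f^*\mathcal{O}(1)\simeq L)$ for which $f$ does not factor through any relative hyperplane (equivalently, the $f^*x_i$ are fibrewise linearly independent). Under that hypothesis your Nakayama argument goes through, and the two constructions are then mutually inverse. In the paper's applications---a smooth conic $C'\hookrightarrow\mathbb{P}^2_S$---this nondegeneracy is automatic, so the imprecision is harmless there.
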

\begin{proof}
	One implication is obvious. For the other one, suppose to have a morphism $f:X\rightarrow\mathbb{P}_S^n$ and an isomorphism $f^*\mathcal{O}(1)\simeq L$. For functoriality of the pushforward we obtain an isomorphism $\pi_*L\simeq \pi_*f^*\OO(1)$. We want to produce an isomorphism of the sheaf on the right with $\OO_S^{\oplus n+1}$.
	
	Let ${\rm pr}_2:\PP^n\times S\to S$ be the canonical projection. Then we have a surjective morphism of locally free sheaves 
	$ {\rm pr}_2^*{\rm pr}_{2*}\OO(1) \to \OO(1) $
	that composed with $f^*$ and $\pi_*$ induces a surjective morphism of locally free sheaves
	$ \alpha:\pi_*\pi^* {\rm pr}_{2*}\OO(1) \to \pi_*f^*\OO(1) $.
	Observe that we have a canonical chain of isomorphisms: \[\OO_S^{\oplus n+1}\simeq \pi_*\pi^*\OO^{\oplus n+1}_S\simeq \pi_*\pi^*{\rm pr}_{2*}\OO(1)\] Thus, composing these isomorphisms with $\alpha$ we get a surjective morphism of locally free sheaves
	$\OO_S^{\oplus n+1} \to \pi_*f^*\OO(1) $. The hypotheses imply that this morphism is actually an isomorphism.
\end{proof}
\begin{lm} \label{lm:bc}
	Let $\pi:C\rightarrow S$ be a family of rational curves. Then the sheaf $\pi_*\ccan^{-1}$ is a locally free sheaf on $S$ of rank $3$ which satisfies the base change property. The morphism $\pi^*\pi_*\ccan^{-1}\rightarrow \ccan^{-1}$ is surjective and induces a closed immersion $C\hookrightarrow\mathbb{P}(\pi_*\ccan^{-1})$.
\end{lm}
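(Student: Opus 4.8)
The strategy is to verify the three assertions in turn, reducing each to a statement on the fibres of $\pi$ and then globalising. Throughout, the key fact is that every geometric fibre $C_s$ is a smooth proper curve of genus $0$, so that $\ccan^{-1}$ restricts to a degree-$2$ line bundle on $C_s$; Riemann--Roch together with Serre duality give $h^0(C_s,\ccan^{-1}|_{C_s})=3$ and $h^1(C_s,\ccan^{-1}|_{C_s})=0$.

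First I would establish local freeness, the rank, and the base change property via the theorem on cohomology and base change. Since $h^1$ vanishes identically on the fibres, the theorem yields $R^1\pi_*\ccan^{-1}=0$, shows that $\pi_*\ccan^{-1}$ is locally free of rank $3$, and guarantees that its formation commutes with arbitrary base change; in particular there is a canonical isomorphism $\pi_*\ccan^{-1}\otimes\kappa(s)\simeq H^0(C_s,\ccan^{-1}|_{C_s})$ for every $s\in S$.

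Next, for the surjectivity of the evaluation map $\pi^*\pi_*\ccan^{-1}\to\ccan^{-1}$, I would argue fibrewise, since surjectivity of a map of coherent sheaves may be checked on fibres by Nakayama. The base-change isomorphism from the previous step identifies the restriction of the evaluation map to $C_s$ with the fibrewise evaluation $H^0(C_s,\ccan^{-1}|_{C_s})\otimes\mathcal{O}_{C_s}\to\ccan^{-1}|_{C_s}$, and the latter is surjective because a degree-$2$ line bundle on a smooth genus-$0$ curve is globally generated. This surjection then defines, by the universal property of the projective bundle (equivalently, by lemma \ref{lm:triv} applied locally over $S$), a morphism $\phi:C\to\mathbb{P}(\pi_*\ccan^{-1})$ of $S$-schemes with $\phi^*\mathcal{O}(1)\simeq\ccan^{-1}$.

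It remains to prove that $\phi$ is a closed immersion, which I expect to be the main obstacle. I would first note that $\phi$ is proper, being a morphism of separated $S$-schemes with $C$ proper over $S$. On each fibre, $\phi_s$ is the anticanonical morphism of the genus-$0$ curve $C_s$, realising it as a smooth conic in $\mathbb{P}^2_{\kappa(s)}$, hence a closed immersion; in particular $\phi_s$ is injective, so $\phi$ is quasi-finite and therefore finite. A finite morphism is a closed immersion precisely when $\mathcal{O}_{\mathbb{P}(\pi_*\ccan^{-1})}\to\phi_*\mathcal{O}_C$ is surjective. Since $C$ is flat over $S$ and $\phi$ is affine, $\phi_*\mathcal{O}_C$ is flat over $S$ and its formation commutes with base change to the fibres $\mathbb{P}(\pi_*\ccan^{-1})_s\simeq\mathbb{P}^2_{\kappa(s)}$; the restriction of $\mathcal{O}\to\phi_*\mathcal{O}_C$ to such a fibre is $\mathcal{O}_{\mathbb{P}^2_{\kappa(s)}}\to(\phi_s)_*\mathcal{O}_{C_s}$, which is surjective because $\phi_s$ is a closed immersion. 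A final application of Nakayama then promotes this fibrewise surjectivity to surjectivity of the map of coherent sheaves, and $\phi$ is a closed immersion. The points requiring the most care are the base-change compatibilities of $\phi_*\mathcal{O}_C$ and the passage from the fibrewise closed-immersion statement to the global one, both of which rest on the flatness of $C$ over $S$.
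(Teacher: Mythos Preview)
Your proof is correct and follows essentially the same approach as the paper: the paper's own proof is the single line ``Follows from the base change theorem in cohomology applied to $\pi_*\ccan^{-1}$,'' and your argument is precisely a careful unpacking of that sentence. You supply considerably more detail than the paper does---especially in the closed-immersion step, where you pass through properness, quasi-finiteness, and a Nakayama argument on $\mathcal{O}\to\phi_*\mathcal{O}_C$---but the underlying strategy is identical.
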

\begin{proof}
	Follows from the base change theorem in cohomology (\cite{Har}*{Th. III.12.11}) applied to $\pi_*\ccan^{-1}$.
\end{proof}
Consider the prestack in groupoids over the category $({\rm Sch}/k)$ of schemes
$$\cl{E}(S)=\left\{ (\pi:C\to S,\alpha:\pi_*\omega_{C/S}^{-1}\simeq \OO_S^{\oplus 3}) \right\}$$
where $C\to S$ is a family of rational curves, and the morphisms $$(C\to S,\alpha:\pi_*\omega_{C/S}^{-1}\simeq \OO_S^{\oplus 3})\to (C'\to S',\alpha':\pi'_*\omega_{C'/S'}^{-1}\simeq \OO_{S'}^{\oplus 3})$$ are given by triples $(\varphi:S'\to S,\psi:C'\simeq \varphi^*C,\phi:\pi'_*\omega_{C'/S'}^{-1}\simeq\varphi^*\pi_*\omega_{C/S}^{-1})$, where $\phi$ must commute with $\alpha$ and $\alpha'$. It can be easily checked that this prestack is equivalent to a sheaf.

Observe that there is a free and transitive action of $\GLt$ on $\cl{E}$, which turns $\cl{E}$ into a $\GLt$-torsor sheaf over $\mathcal{M}_0$.
Consider also the auxiliary prestack 
$$\cl{E}'(S)=\left\{ (({\bf D}),\beta:i^*\OO(1)\simeq \omega_{C/S}^{-1})\right\}$$ 
where $({\bf D})$ is a commutative diagram of the form
$$ \xymatrix{
	C \ar@{^{(}->}[r]^i \ar[d] & \PP^2_S \ar[dl] \\
	S
}$$
with $C\to S$ a family of rational curves, and $i$ a closed immersion. Recall that $\cl{S}$ is the scheme parametrising smooth forms of degree $2$ in three variables.
\begin{lm}\label{lm:E'_equi_A}
	There are isomorphisms $\cl{E}\simeq\cl{E}'\simeq\cl{S}$.
\end{lm}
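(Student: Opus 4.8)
The plan is to prove the two isomorphisms $\cl{E}\simeq\cl{E}'$ and $\cl{E}'\simeq\cl{S}$ separately, constructing in each case a pair of mutually inverse morphisms of sheaves and relying on Lemmas \ref{lm:triv} and \ref{lm:bc}.

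For $\cl{E}\simeq\cl{E}'$, I would apply Lemma \ref{lm:triv} directly with $L=\omega_{C/S}^{-1}$ and $n=2$. Given an object $(\pi:C\to S,\alpha:\pi_*\omega_{C/S}^{-1}\simeq\OO_S^{\oplus 3})$ of $\cl{E}$, Lemma \ref{lm:bc} guarantees that $\pi_*\omega_{C/S}^{-1}$ is locally free of rank $3$ and globally generated, so Lemma \ref{lm:triv} turns the trivialization $\alpha$ into a morphism $f:C\to\PP^2_S$ together with an isomorphism $\beta:f^*\OO(1)\simeq\omega_{C/S}^{-1}$. The only point left to check is that $f$ is a closed immersion, but this is exactly the content of the last sentence of Lemma \ref{lm:bc}, so $(f,\beta)$ defines an object of $\cl{E}'$. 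The ``vice versa'' clause of Lemma \ref{lm:triv} supplies the inverse construction, sending $(i,\beta)$ to the trivialization $\alpha$ recovered from $\beta$; compatibility with morphisms and the fact that the two assignments are mutually inverse follow formally from the bijection asserted in Lemma \ref{lm:triv}.

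For $\cl{E}'\simeq\cl{S}$, the idea is that an object of $\cl{E}'$ is precisely a family of smooth conics together with a rigidification of its defining equation. Starting from $(i:C\hookrightarrow\PP^2_S,\beta)$, every geometric fibre of $C$ is a smooth plane conic, so $\cl{I}_C\otimes\OO(2)\simeq p^*M$ for a line bundle $M$ on $S$ (where $p:\PP^2_S\to S$), and twisting the inclusion $\cl{I}_C\hookrightarrow\OO_{\PP^2_S}$ by $\OO(2)$ and pushing forward yields a map $M\to p_*\OO(2)$ into the rank-$6$ bundle of quadratic forms. To land in $\bA(2,2)$ rather than in its projectivization, I would produce a canonical trivialization $M\simeq\OO_S$ using adjunction: the isomorphism $\omega_{C/S}\simeq i^*\omega_{\PP^2_S/S}\otimes\cl{N}_{C/\PP^2_S}$, combined with $\beta$ and its tensor powers to rewrite every restriction of $\OO(1)$ in terms of $\omega_{C/S}$, forces $\pi^*M\simeq\OO_C$ and hence $M\simeq\OO_S$ canonically. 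The resulting global quadratic form $q$ is smooth because $C\to S$ is smooth, giving an $S$-point of $\cl{S}$. Conversely, I would send $q\in\cl{S}(S)$ to the conic $C=V(q)\hookrightarrow\PP^2_S$ equipped with the adjunction isomorphism $\beta$, and the two constructions are visibly inverse to one another.

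The main obstacle is this normalization step in $\cl{E}'\simeq\cl{S}$: without the datum $\beta$ one only recovers the conic up to scaling of its equation, i.e. a point of $\PP(\bA(2,2))$ rather than of $\bA(2,2)$ itself. Verifying that $\beta$ yields a genuinely canonical identification $M\simeq\OO_S$ — so that the quadratic form $q$, and not merely its zero locus, is well defined and functorial in $S$ — is the delicate part, and it is precisely here that adjunction together with $\beta$ must be used carefully. Everything else reduces to the two preliminary lemmas and to standard base-change compatibilities.
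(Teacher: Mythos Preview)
Your proposal is correct and follows essentially the same route as the paper: the first isomorphism comes directly from Lemma~\ref{lm:triv}, and for the second both you and the paper use the adjunction formula $\omega_{C/S}\simeq i^*\omega_{\PP^2_S/S}\otimes\cl{I}_C^{-1}|_C$ together with $\beta$ to convert the isomorphism $i^*\OO(1)\simeq\omega_{C/S}^{-1}$ into a canonical trivialization $\cl{I}_C\simeq\OO(-2)$, which singles out a specific defining equation $q$ rather than merely its zero locus. The paper packages this step as a chain of cohomological identifications $H^0(C,i^*\OO(1)\otimes\omega_{C/S})\simeq H^0(\PP^2_S,\OO(-2)\otimes\cl{I}_C^{-1})$ instead of first writing $\cl{I}_C\otimes\OO(2)\simeq p^*M$ and then trivializing $M$, but the content is the same.
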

\begin{proof}
	The first isomorphism follows from lemma \ref{lm:triv}.
	Suppose to have a commutative triangle 
	$$ \xymatrix{
		C \ar@{^{(}->}[r]^i \ar[d] & \PP^2_S \ar[dl] \\
		S
	}$$
	and an isomorphism $\varphi:i^*\OO(1)\simeq \omega_{C/S}^{-1}$. This one can be seen as a non-zero section of $H^0(C,i^*\OO(1)\otimes\ccan)$. Now we have the following chain of isomorphisms:
	\begin{align*}
	H^0(C,i^*\OO(1)\otimes\ccan)&\simeq H^0(\PP^2_S,i_*(i^*\OO(1)\otimes\ccan))\\
	&\simeq H^0(\PP^2_S,i_*(i^*(\OO(1)\otimes\omega_{\PP^2_S}\otimes\cl{I}^{-1}))\\
	&\simeq H^0(\PP^2_S,\OO(-2)\otimes\cl{I}^{-1}\otimes i_*\OO_C)
	\end{align*}
	where $\cl{I}$ denotes the ideal sheaf of $i(C)\subset \PP^2_S$ and in the last line we used the projection formula and the canonical isomorphism $\omega_{\PP^2}\simeq \OO(-3)$. If $L:=\OO(-2)\otimes\cl{I}^{-1}$, then by twisting the exact sequence
	$$ 0\to \cl{I}\to \OO_{\PP^2_S}\to i_*\cl{\OO}_C\to 0 $$
	by $L$ and by taking the associated long exact sequence in cohomology, we easily deduce the isomorphism
	$$ H^0(\PP^2_S,L\otimes i_*\OO_C)\simeq H^0(\PP^2_S,L) $$
	Now observe that a non-zero global section of $L$ induces an isomorphism $\cl{I}\simeq\OO(-2)$, and vice versa. 
	
	Thus, by dualizing the injective morphism of sheaves $\cl{I}\hookrightarrow \OO_{\PP^2_S}$ and by applying the isomorphism above, we obtain a morphism $\OO_{\PP^2_S}\to\OO(2)$, which is equivalent to choosing a global section $q$ of $\OO(2)$, that will be smooth because of the hypotheses on $C$. 
	
	It is easy to check that the induced morphism
	$$\cl{E}'\longrightarrow \cl{S},\quad ((\boldsymbol{D}),\varphi)\longmapsto q$$	
	is an isomorphism, whose inverse is given by defining $C$ as $Q\subset \PP^2_S$, where $Q$ is the zero locus of $q$.
\end{proof}
%\begin{cor} \label{cor:A_is_torsor}
%	The scheme $\cl{S}$ is a $\GLt$-torsor over $\cl{M}_0$ with action defined as $ A\cdot q(\underline{X}):=\det(A)q(A^{-1}\underline{X}) $. In particular, there is an isomorphism $[\cl{S}/\GLt]\simeq \cl{M}_0$.
%\end{cor}
\begin{proof}[Proof of prop. \ref{cor:A_is_torsor}]
	From lemma \ref{lm:E'_equi_A} we readily deduce proposition \ref{cor:A_is_torsor}. We only have to check that the action of $\GLt$ on $\cl{S}$ is the correct one, but this immediately follows from the isomorphism $\cl{I}\simeq\omega_{\PP^2_S}(1)$ seen in the proof of lemma \ref{lm:E'_equi_A}.
\end{proof}
Now we give other two definitions that are useful for our purposes:
\begin{df}\label{df: GLtxGm counterpart of a scheme}
	Let $X$ be a scheme of finite type over ${\rm Spec}(k)$ endowed with a $\PGLt\times\Gm$-action. Then a $\GLt\times\Gm$-counterpart of $X$ is a scheme $Y$ endowed with a $\GLt\times\Gm$-action and an isomorphism $\varphi:[Y/\GLt\times\Gm]\simeq[X/\PGLt\times\Gm]$.
\end{df}
\begin{df}\label{def:GLtxGm counterpart of a morphism}
	Let $X$ and $X'$ be two schemes of finite type over ${\rm Spec}(k)$ endowed with a $\PGLt\times\Gm$-action, and let $f:X\to X'$ be a $\PGLt\times\Gm$-equivariant morphism. Then a $\GLt\times\Gm$-counterpart of $f$ is the datum of: 
	\begin{enumerate}
		\item[(i)] a $\GLt\times\Gm$-equivariant morphism $g:Y\to Y'$ between two schemes endowed with a $\GLt\times\Gm$-action,
		\item[(ii)] Two isomorphisms $\varphi:[Y/\GLt\times\Gm]\simeq [X/\PGLt\times\Gm]$ and $\varphi':[Y'/\GLt\times\Gm]\simeq [X'/\PGLt]$,
		such that the diagram
		$$\xymatrix{
			[X/\PGLt\times\Gm]\ar[r] \ar[d]^{\varphi} & [X'/\PGLt\times\Gm]\ar[d]^{\varphi'} \\
			[Y/\GLt\times\Gm]\ar[r] & [Y'/\GLt\times\Gm] }$$
	\end{enumerate}
	commutes, where the top arrow (resp. the bottom arrow) is induced by $f$ (resp. by $g$).
\end{df}
Then, just as in the previous case, we have:
\begin{thm}\label{prop:Gm counterpart exists}
	Let $f:X\to X'$ be a proper $\PGLt\times\Gm$-equivariant morphism between $\PGLt\times\Gm$-schemes. Then it always admits a $\GLt\times\Gm$-counterpart.
\end{thm}
\begin{proof}
	The proof of the theorem above works exactly in the same way as the proof of theorem \ref{prop:counterpart exists}: one has only to take into account the action of $\Gm$, but is immediate to check that $\cl{S}$ is a $\GLt\times\Gm$-torsor over $\cl{B}(\PGLt\times\Gm)$, where the action of $\Gm$ is the trivial one. From this the theorem follows.
\end{proof}
\section{A new presentation of $\cl{H}_g$ as a quotient stack}\label{sec:new pres}
\noindent
Fix a base field $k$ of characteristic different from $2$ and an odd integer $g\geq 3$. Let us stress the fact that $g$ will always be odd, as this is a key property in most of the constructions presented here. Recall that by a \textit{family of rational curves} over $S$ we mean a proper and smooth scheme over a $k$-base scheme $S$ such that every fiber is a connected curve of genus $0$. Then a \textit{family of hyperelliptic curves} of genus $g$ over $S$ is defined as a pair $(C\to S,\iota)$ where $C\to S$ is a proper and smooth scheme over a base $k$-scheme $S$ such that every fiber is a connected curve of genus $g$, and $\iota\in{\rm Aut}(C)$ is an involution such that $C/\langle\iota\rangle\to S$ is a family of rational curves.

Let $\cl{H}_g$ be the moduli stack of smooth hyperelliptic curves of genus $g$, whose objects are families of hyperelliptic curves of genus $g$ as defined above, and the morphisms are the isomorphisms over $S$ (the condition of commuting with the involutions is automatically satisfied).
The goal of this section is to give a presentation of this stack as a quotient stack $[U'/\GLt\times\Gm]$, where $U'$ is an certain scheme that will be defined later. This is done in theorem \ref{thm:presentation_Hg}.
\subsection{Properties of hyperelliptic curves}
We briefly recall some basic facts about hyperelliptic curves (for an extensive treament see \cite{KnuKle}). Let $(C\to S,\iota)$ be a family of hyperelliptic curves of genus $g$. By definition $\iota$ induces the hyperelliptic involution on every geometric fiber, hence an $S$-morphism $f:C\to C'$ of degree $2$ which on each geometric fiber corresponds to taking the quotient w.r.t. the hyperelliptic involution. The scheme $C'\to S$ is a smooth family of rational curves.

Families of hyperelliptic curves have a canonical subscheme $W_{C/S}$, called the \textit{Weierstrass subscheme}, that is the ramification divisor of $f$ endowed with the scheme structure given by the zeroth Fitting ideal of $\Omega^1_{C/C'}$. It is finite and \'{e}tale over $S$ of degree $2g+2$, and its associated line bundle, when seen as an effective Cartier divisor, is the dualizing sheaf $\omega_f$ relative to the finite morphism $f$. Clearly, $f$ induces an isomorphism between $W_{C/S}$ and the branch divisor $D$ on $C'$. 
\subsection{Preliminaries on $\cl{H}_g$}
Let $\bA(1,2g+2)\sm$ be the scheme parametrising smooth binary forms of degree $2g+2$. There is an action of $\PGLt\times\Gm$ over this scheme defined as follows: $$(A,\lambda)\cdot (f(x,y))=\lambda^{-2}\det(A)^{g+1}f(A^{-1}(x,y))$$
Observe that the above action is well defined, though the determinant of an element of $\PGLt$ is not.
In \cite{ArsVis} the authors proved that $\cl{H}_g$ is isomorphic to the quotient stack $[\bA(1,2g+2) \sm/\PGLt\times\Gm]$.

Therefore, a new presentation of $\cl{H}_g$ as a quotient stack with respect to the action of $\GLt\times\Gm$ can be obtained by finding a $\GLt\times\Gm$-counterpart of the $\PGLt\times\Gm$-scheme $\bA(1,2g+2)\sm$.

Actually, in this section we will also study some $\GLt$-counterparts and $\GLt\times\Gm$-counterparts of other schemes that will be relevant for our purposes.
\subsection{Computation of $\GLt$-counterparts}
Let $\bA(1,2n)$ be the affine space of the homogeneous polynomials of degree $2n$ in two variables. There is an action of $\PGLt$ on this scheme given by:
$$A\cdot f(x,y):=\det(A)^nf(A^{-1}(x,y))$$
This induces an action of $\PGLt$ on $\PP(1,2n)$. We want to find a $\GLt$-counterpart of this $\PGLt$-scheme.

As in section \ref{sec:prelim}, let $\bA(2,2)$ be the affine space of trinary forms of degree $2$, and let $\PP(2,2)$ be its projectivization. Call $\cl{Q}$ the universal conic over $\PP(2,2)$ and denote $\pr_i$ the projection of the scheme $\PP(2,2)\times \PP^2$ onto its $i^{\rm th}$-factor. Then we have the following exact sequence:
$$ 0\longrightarrow \cl{I}_{\cl{Q}} \longrightarrow \OO_{\PP(2,2)\times \PP^2}\longrightarrow i_*\OO_{\cl{Q}} \longrightarrow 0 $$
After tensoring with $\pr_2^*\OO_{\PP^2}(n)$ and pushing forward along $\pr_1$, due to the fact that $R^1\pr_{1*}(\cl{I}_{\cl{Q}}\otimes\pr_2^*\OO_{\PP^2}(n))$ vanishes, we obtain the short exact sequence:
$$ 0\longrightarrow \pr_{1*}(\cl{I}_{\cl{Q}}\otimes\pr_2^*\OO_{\PP^2}(n)) \longrightarrow \pr_{1*}\pr_2^*\OO_{\PP^2}(n) \longrightarrow \pr_{1*}(i_*\OO_{\cl{Q}}\otimes\pr_2^*\OO_{\PP^2}(n)) \longrightarrow 0$$
The sheaf in the center is isomorphic to $H^0(\PP^2,\OO(n))\otimes \OO_{\PP(2,2)\times \PP^2}$ and hence it is free. Observe also that, due to the isomorphism $\cl{I}_{\cl{Q}}\simeq \pr_1^*\OO(-1)\otimes \pr_2^*\OO_{\PP^2}(-2)$, the sheaf on the left is isomorphic to the locally free sheaf $H^0(\PP^2,\OO(n-2))\otimes \OO(-1)$. 

This implies that the last morphism is surjective and that the sheaf on the right of the sequence is locally free.
\begin{df} \label{def:Vn}
	\hspace{1pt}
	\begin{enumerate}
		\item We define $\overline{V}_n$ as the vector bundle $\pr_{1*}(i_*\OO_{\cl{Q}}\otimes\pr_2^*\OO_{\PP^2}(n))$ on $\PP(2,2)$.
		\item We define $V'_n$ as the vector bundle over $\bA(2,2)\setminus\{0\}$ obtained by pulling back $\overline{V}_n$ along the $\Gm$-torsor $\bA(2,2)\setminus\{0\} \to \PP(2,2)$.
		\item We define the vector bundle $V_n$ as the restriction of $V'_n$ to $\cl{S}$, the open subscheme of $\bA(2,2)$ parametrising quadratic ternary forms of rank $3$.
	\end{enumerate}
\end{df}
\begin{rmk}\label{rmk:point of Vn}
	Another description of $V'_n$ is the following: let $\bA(n,d)$ be the affine space parametrising homogeneous polynomials (forms) of degree $d$ in $n+1$ variables.
	For $n\geq 2$ we can define an injective morphism of trivial vector bundles over $\bA(2,2)\setminus\{0\}$ as follows:
	$$ \bA(2,2)\setminus\{0\}\times\bA(2,n-2)\longrightarrow\bA(2,2)\setminus\{0\}\times\bA(2,n),\quad (q,f)\longmapsto (q,qf)$$
	The quotient is a locally free sheaf on $\bA(2,2)\setminus\{0\}$ and it coincides with $V_n'$.
	
	For any field $K$, the $K$-points of $V'_n$ can be thought as pairs $(q,f)$, where $q$ is quadratic trinary form of rank $3$ over $K$, and $f$ is an equivalence class of ternary forms of degree $n$ over $K$, where two such forms are equivalent if their difference is divisible by $q$.
\end{rmk}
\begin{prop}\label{prop:PV/GLt_iso_Hilb}
	The projective bundle $\PP(V_n)$ is a $\GLt$-counterpart of $\PP(1,2n)$.
\end{prop}
\begin{proof}
	The quotient $[\PP(1,2n)/\PGLt]$ is the stack whose objects are pairs $(C\to S,D)$, where $C\to S$ is a family of rational curves and $D\subset C$ is a divisor such that the induced morphism $D \to S$ is flat and finite of degree $2n$.
	
	Therefore the $\GLt$-counterpart of $\PP(1,2n)$ is the fibred product:
	$$[\PP(1,2n)/\PGLt]\times_{\cl{M}_0}\cl{S}$$ 
	Recall from lemma \ref{lm:E'_equi_A} that we have an isomorphism of $\cl{S}$, the scheme parametrising quadratic ternary forms of rank $3$, with $\cl{E'}$, the stack in sets whose objects are pairs $(({\bf D}),\beta)$, where $({\bf D})$ is a commutative diagram of the form
	$$ \xymatrix{
		C \ar@{^{(}->}[r]^i \ar[d] & \PP^2_S \ar[dl] \\
		S
	}$$
	with $C\to S$ a family of rational curves, $i$ a closed immersion and $\beta$ is an isomorphism of locally free sheaves $i^*\OO(1)\simeq \omega_{C/S}^{-1}$.
	
	Therefore the fibred product above can be described as the stack whose objects are triples $(({\bf D}),\beta,D)$, where $(({\bf D}),\beta)$ is an object of $\cl{E}'$ and $D\subset C$ is as before.
	
	First we construct a morphism from this stack to $\PP(V_n)$. Clearly, there is a natural morphism $p:[\PP(1,2n)/\PGLt]\times_{\cl{M}_0}\cl{S}\to \cl{S}$. 
	
	Given an object $(({\bf D}),\beta,D)$, consider the injective morphism of sheaves $\cl{I}_D\hookrightarrow \OO_C$. After twisting by $\OO(n)$ and pushing forward along $\pi:C\to S$, we get the injective morphism:
	$$ \pi_*\cl{I}_D(n)\longrightarrow \pi_*\OO_C(n) $$
	By cohomology and base change (see \cite{Har}*{Th. III.12.11}) we see that the first sheaf is invertible, whilst the second one, using the projection formula for vector bundles, can easily be proved to be isomorphic to $p^*V_n$. The well known characterization of the morphisms to projective bundles yields a morphism of sets $$[\PP(1,2n)/\PGLt]\times_{\cl{M}_0}\cl{S}(S)\to \PP(V_n)(S)$$
	As everything is functorial, we get a well defined morphism from the $\GLt$-counterpart of $\PP(1,2n)$ to $\PP(V_n)$.
	
	To construct an inverse of this morphism, consider the universal conic $\pi:\cl{Q}\to \PP(2,2)$ and pull it back to $\PP(\overline{V}_n)$ (see definition \ref{def:Vn}), so that we obtain a cartesian diagram:
	\[\xymatrix{
		\cl{P} \ar[r] \ar[d]^{\rho} & \cl{Q} \ar[d]^{\pi} \\
		\PP(\overline{V}_n) \ar[r] & \PP(2,2)}\]	
	By construction $\cl{P}$ is a closed subscheme of $\PP(\overline{V}_n)\times\PP^2$.
	The Euler exact sequence for $\PP(\overline{V}_n)$, pulled back to $\cl{P}$ and after some manipulations, yields an injective morphism:
	$$ \rho^*\OO_{\PP(\overline{V}_n)}(-1) \longrightarrow \rho^*\rho_*(\pr_2^*\OO_{\PP^2}(n)|_{\cl{P}}) $$
	Observe that there is a surjective morphism from the sheaf on the right to $\pr_2^*\OO_{\PP^2}(n)|_{\cl{P}}$. Hence, after twisting by $\pr_2^*\OO_{\PP^2}(-n)|_{\cl{P}}$ we can construct a morphism:
	$$ \rho^*\OO_{\PP(\overline{V}_n)}(-1)\otimes\pr_2^*\OO_{\PP^2}(-n)|_{\cl{P}} \longrightarrow \OO_{\cl{P}} $$
	It is immediate to verify that this defines a Cartier divisor $\cl{D}\subset\cl{P}$ whose associated sheaf of ideals is $\rho^*\OO_{\PP(\overline{V}_n)}(-1)\otimes\pr_2^*\OO_{\PP^2}(-n)|_{\cl{P}}$ and whose projection onto $\PP(\overline{V}_n)$ is flat and finite of degree $2n$.
	
	Pulling back the universal conic and the Cartier divisor to $\PP(V_n)$, we get an object of $[\PP(1,2n)/\PGLt]\times_{\cl{M}_0}\cl{S}(\PP(V_n))$. As everything is functorial, this defines a morphism $\PP(V_n) \to [\PP(1,2n)/\PGLt]\times_{\cl{M}_0}\cl{S}$. It is easy to check that this second morphism is the inverse of the first morphism that we defined, which concludes the proof of the proposition.
\end{proof}
\begin{rmk}
	The scheme $\PP(1,2n)$ can be thought as the Hilbert scheme ${\rm Hilb}^{2n}_{\PP^1_k}$ of $2n$ points on $\PP^1$. Its quotient $[\PP(1,2n)/\PGLt]$ can be identified with the Hilbert stack ${\rm Hilb}^{2n}_{P/\cl{B}\PGLt}$ of $2n$ points relative to the universal torsor ${\rm Spec}(k)$ over the classifying stack $\cl{B}\PGLt$. Equivalently, we can think of this stack as the Hilbert stack ${\rm Hilb}^{2n}_{\cl{C}/\cl{M}_0}$ of $2n$ points relative to the universal rational curve $\cl{C}$ over $\cl{M}_0$.
	
	Proposition \ref{prop:PV/GLt_iso_Hilb} gives us the following presentation of this stack as a quotient stack:
	$$ {\rm Hilb}^{2n}_{\cl{C}/\cl{M}_0}\simeq [\PP(V_n)/\GLt] $$
	Observe that the projective bundle $\PP(V_n)$ itself can be thought as the Hilbert scheme ${\rm Hilb}^{2n}_{\cl{Q}/\cl{S}}$ of $2n$ points relative to the univeral quadric $\cl{Q}$ over $\cl{S}$, the scheme parametrising quadratic ternary forms of rank $3$.
	
	An interesting feature of this new presentation is that provides us with a natural way to partially extend the Hilbert stack ${\rm Hilb}^{2n}_{\cl{C}/\cl{M}_0}$, which is a stack over $\cl{M}_0$, to a stack over the stack of genus $0$ and at most $1$ nodal curves $\cl{M}_0^{\leq 1}$. Indeed, instead of taking $\PP(V_n)$ we can take the projectivization of the vector bundle $V_n^{\leq 1}$ defined over $\bA(2,2)^{\leq 1}$, the scheme parametrising quadrics in three variables of rank strictly greater than $1$. Then the quotient stack $[\PP(V_n^{\leq 1})/\GLt]$ gives a natural enlargement of the Hilbert stack ${\rm Hilb}^{2n}_{\cl{C}/\cl{M}_0}$ to a stack over $\cl{M}_0^{\leq 1}$.
\end{rmk}
\begin{prop}\label{prop:GLt-counterpart of A(1,2n)}
	The vector bundle $V_n$ is a $\GLt$-counterpart of $\bA(1,2n)$.
\end{prop}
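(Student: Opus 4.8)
The goal is to produce a canonical isomorphism of stacks $[V_n/\GLt]\simeq[\bA(1,2n)/\PGLt]$, so that $V_n$ satisfies Definition \ref{def: GLt-counterpart of a scheme}. I would argue functorially, by comparing the groupoids of $S$-points of the two quotient stacks, both viewed as bundles over $\cl{M}_0\simeq\cl{B}\PGLt$.

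First I would identify $[\bA(1,2n)/\PGLt]$ as a moduli problem over $\cl{M}_0$. Since $\bA(1,2n)$ is a $\PGLt$-equivariant vector bundle over a point, $[\bA(1,2n)/\PGLt]$ is the total space of the associated vector bundle over $\cl{B}\PGLt$. The key point is that the $\PGLt$-representation of binary forms of degree $2n$ twisted by $\det^n$ is canonically $H^0(\PP^1,\omega_{\PP^1}^{-n})$, with the linearization induced by the intrinsic $\PGLt$-linearization of $\omega_{\PP^1}$. The twist is exactly what makes the displayed formula $A\cdot f=\det(A)^n f(A^{-1}(x,y))$ descend from $\mathrm{GL}_2$ to $\PGLt$: a central scalar $tI$ acts by $\det(tI)^n=t^{2n}$ on the coefficient and by $t^{-2n}$ on $f(t^{-1}(x,y))$, hence trivially. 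Consequently, for a family of rational curves $\pi:C'\to S$ (i.e.\ a $\PGLt$-torsor over $S$) the associated bundle is $\pi_*\omega_{C'/S}^{-n}$, and therefore $[\bA(1,2n)/\PGLt](S)=\{(\pi:C'\to S,\ \sigma\in H^0(C',\omega_{C'/S}^{-n}))\}$, with the evident isomorphisms as morphisms.

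Next I would compute $[V_n/\GLt]$ in the same terms. The projection $V_n\to\cl{S}$ is $\GLt$-equivariant, and by Proposition \ref{cor:A_is_torsor} the scheme $\cl{S}$ is a $\GLt$-torsor over $\cl{M}_0$, so $[\cl{S}/\GLt]\simeq\cl{M}_0$ and $[V_n/\GLt]$ again lives over $\cl{M}_0$. Using the description $V_n(S)=\{(q,f)\}$ recorded after Definition \ref{def:Vn} — with $q$ a smooth conic $Q\subset\PP^2_S$ and $f\in H^0(Q,\OO_Q(n))$ — passing to the $\GLt$-quotient forgets the projective embedding of the conic, this forgetting being precisely the torsor map $\cl{S}\to\cl{M}_0$, and leaves a family of rational curves $Q\to S$ together with $f\in H^0(Q,\OO_Q(n))$. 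Adjunction on the smooth conic gives $\omega_{Q/S}\simeq(\omega_{\PP^2_S/S}\otimes\OO(Q))|_Q\simeq\OO_Q(-1)$, i.e.\ $\OO_Q(n)\simeq\omega_{Q/S}^{-n}$, exactly as in the proof of Lemma \ref{lm:E'_equi_A}. Hence $[V_n/\GLt](S)=\{(Q\to S,\ f\in H^0(Q,\omega_{Q/S}^{-n}))\}$, which matches the description obtained in the first step.

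Finally I would check that this identification is functorial in $S$ and compatible with the group actions, yielding a canonical equivalence of groupoids. The conceptual backbone I would actually cite is the construction in the proof of Proposition \ref{prop:counterpart exists}: a $\GLt$-counterpart of $\bA(1,2n)$ is the fibre product $\cl{S}\times_{\cl{M}_0}[\bA(1,2n)/\PGLt]$, and unwinding the moduli descriptions of $\cl{S}\simeq\cl{E}'$ (conics $Q\subset\PP^2$ equipped with $\OO_Q(1)\simeq\omega_{Q/S}^{-1}$) and of $[\bA(1,2n)/\PGLt]$ shows this fibre product to be exactly the functor $\{(q,f)\}$ defining $V_n$. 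I expect the main obstacle to be the bookkeeping in the first step: reconciling the explicit determinant-twisted cocycle defining the $\PGLt$-action on $\bA(1,2n)$ with the intrinsic linearization of $\omega^{-n}$, so that the associated bundle is genuinely $\pi_*\omega_{C'/S}^{-n}$ and the $\GLt$-action on the $f$-coordinate of $V_n$ matches it under the adjunction isomorphism. Once that linearization is pinned down, the rest is a direct comparison of moduli functors.
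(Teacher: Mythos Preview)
Your proposal is correct and follows essentially the same route as the paper: both identify $[\bA(1,2n)/\PGLt]$ with the stack of pairs $(C'\to S,\sigma\in H^0(C',\omega_{C'/S}^{-n}))$ (the paper writes $T_{C'/S}^{\otimes n}$, which is the same thing), then realize the $\GLt$-counterpart as the fibre product $\cl{S}\times_{\cl{M}_0}[\bA(1,2n)/\PGLt]$ and match it with $V_n$ via the identification $\OO_Q(n)\simeq\omega_{Q/S}^{-n}$ coming from the datum $\varphi:i^*\OO(1)\simeq T_{C'/S}$ built into the description of $\cl{S}$. The bookkeeping you flag as the main obstacle is exactly what the paper handles by introducing the auxiliary stack $\bA(1,2n)^{\sim}$ with its explicit $\PGLt$-action on the trivialization $\phi$.
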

\begin{proof}
	Observe that $\bA(1,2n)\setminus \{0\}$ is the total space of the $\Gm$-torsor associated to the tautological line bundle $\OO_{\PP(1,2n)}(-1)$ of $\PP(1,2n)$. We can check, using proposition \ref{prop:PV/GLt_iso_Hilb}, that a $\GLt$-counterpart of this line bundle is given by $\OO_{\PP(V_n)}(-1)$. From this we deduce the the $\Gm$-torsor associated to this line bundle is a $\GLt$-counterpart of $\bA(1,2n)\setminus \{0\}$: such a torsor is precisely $V_n$.
\end{proof}
\begin{prop}\label{prop:GLtxGm-counterpart of A(1,2n)}
	Let $\Gm$ acts on $\bA(1,2n)$ by multiplication for $\lambda$. Then a $\GLt\times\Gm$-counterpart of $\bA(1,2n)$ is $V_n$, where $\Gm$ acts as
	$\lambda\cdot (q,f):=(q,\lambda f)$.
\end{prop}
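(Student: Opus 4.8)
The plan is to bootstrap from proposition \ref{prop:GLt-counterpart of A(1,2n)}, which already identifies $V_n$ as a $\GLt$-counterpart of $\bA(1,2n)$, and simply to keep track of the extra $\Gm$-action throughout that identification. Since $\Gm$ commutes with $\PGLt$ (resp. $\GLt$) and acts only on the ``form'' part of the data, I expect the entire construction of the previous proof to be $\Gm$-equivariant once the correct $\Gm$-action on $V_n$ is pinned down.

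More precisely, first I would invoke the construction of the counterpart given in the proof of proposition \ref{prop:Gm counterpart exists}: a $\GLt\times\Gm$-counterpart of $\bA(1,2n)$ is obtained by pulling back the representable morphism $[\bA(1,2n)/\PGLt\times\Gm]\to\cl{B}(\PGLt\times\Gm)$ along the $\GLt\times\Gm$-torsor $\cl{S}=\bA(2,2)\sm\to\cl{B}(\PGLt\times\Gm)$, on which $\Gm$ acts trivially. Because the $\Gm$-action on $\cl{S}$ is trivial, this pullback agrees, as a scheme with $\GLt$-action, with the fibre product $[\bA(1,2n)^{\sim}/\PGLt]\times_{\cl{M}_0}\cl{S}$ computed in the previous proposition, and that fibre product was shown there to be isomorphic to $V_n$. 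Hence the underlying $\GLt$-counterpart is again $V_n$, and the only remaining task is to read off the residual $\Gm$-action.

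To determine this action I would trace the $\Gm$-action $\lambda\cdot f=\lambda f$ on $\bA(1,2n)$ through the chain of equivalences used in the proof of proposition \ref{prop:GLt-counterpart of A(1,2n)}. A form $f$ is identified first with a section $\sigma$ of $T_{C'/S}^{\otimes n}$ via $\psi_{\rm can}^*\phi^*\sigma$, and then, after fixing $\varphi:i^*\OO(1)\simeq T_{C'/S}$, with a section of $\OO_Q(n)$. Each of these identifications is $\OO_S$-linear in the section, so scaling $f$ by $\lambda$ scales the associated section of $\OO_Q(n)$ by $\lambda$ as well; this is exactly the action $\lambda\cdot(q,f)=(q,\lambda f)$ in the statement. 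Combining this with the $\GLt$-equivariant isomorphism already established yields the desired $\GLt\times\Gm$-equivariant isomorphism $[V_n/\GLt\times\Gm]\simeq[\bA(1,2n)/\PGLt\times\Gm]$.

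The only genuinely delicate point I anticipate is the compatibility of the two $\Gm$-actions, i.e.\ checking that the residual $\Gm$-action coming from the fibre-product description really is scalar multiplication on the section and is not twisted by some character of $\Gm$ arising from the torsor $\cl{S}$. Since $\Gm$ acts trivially on $\cl{S}$ and none of the equivalences in the previous proof involves a choice depending on the $\Gm$-scaling, this compatibility should hold on the nose, but it is the step that deserves the most care.
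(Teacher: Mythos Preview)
Your argument is correct: since $\Gm$ acts trivially on $\cl{S}$, the fibre product over $\cl{B}(\PGLt\times\Gm)$ factors as first undoing the $\Gm$-quotient and then forming the fibre product over $\cl{M}_0$, which is exactly the $\GLt$-counterpart $V_n$ already computed; the residual $\Gm$-action is then read off by linearity of all the identifications in the section $\sigma$, giving $(q,f)\mapsto(q,\lambda f)$.

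The paper takes a more elaborate route. Rather than tracing the $\Gm$-action directly through the section, it introduces an auxiliary stack $\bA(1,2n)''$ whose objects carry an additional line bundle $M$ of degree $n$ together with an isomorphism $\psi:M\simeq T_{C'/S}^{\otimes n}$ (equivalently, a trivialization $\psi':\pi_*(M^{-1}\otimes T_{C'/S}^{\otimes n})\simeq\OO_S$), and lets $\Gm$ act on $\psi$ rather than on $\sigma$. The point of this detour is not the present proposition but the subsequent theorem \ref{thm:presentation_Hg}: there $\Gm$ acts on $\bA(1,2g+2)$ by $\lambda^{-2}$, and one needs a ``square-root'' line bundle $L$ of degree $-n/2$ with $\sigma\in H^0(L^{-2})$ to make the action on a trivialization (rather than on the section) match up. The paper's formulation with $(M,\psi)$ is the template that generalizes to that twisted case, whereas your direct linearity argument, while perfectly adequate here, would not adapt as cleanly when the weight is $-2$.
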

\begin{proof}
	It follows from proposition \ref{prop:GLt-counterpart of A(1,2n)}.
\end{proof}
Let $\Delta'\subset \bA(1,2n)$ be the $\PGLt$-invariant, closed subscheme parametrising singular binary forms of degree $2n$. In other terms, the points of $\Delta'$ correspond to global sections $\sigma$ of $\OO_{\PP^1_k}(2n)$ with multiple roots. We want to find its $\GLt$-counterpart.

Consider the set $D'$ inside $V_{n}$ defined as follows:
$$D':=\left\{ (q,f)\text{ such that }V_+(q,f)\subset\PP^2\text{ is singular } \right\} $$
where $V_+(q,f)$ is the closed subscheme of $\PP^2$ defined by the homogeneous ideal $I=(q,f)$.
Let us show how to put a scheme structure on this set: consider the closed subscheme of $\cl{S}\times\bA(2,n)\times\PP^2$ defined as
$$D'':=\left\{ (q,f,u)\text{ such that }u\text{ is a singular point of }V_+(q,f) \right\}$$
Then $D''$ is a scheme, because it can be defined as the locus where
$$q(u)=f(u)=0,\quad \text{rk}(J(q,f)(u))\text{ is not maximal }$$
Here $J(q,f)$ is the Jacobian matrix of $q$ and $f$. Then the image of $D'$ via the proper morphism
$${\rm pr}: \cl{S}\times\bA(2,n)\times\PP^2 \longrightarrow \cl{S}\times\bA(2,n)$$
inherits a scheme structure, and projecting again ${\rm pr}(D'')$ along the quotient morphism
$$ \cl{S}\times\bA(2,n)\longrightarrow V_n$$
we obtain exactly $D'$, which in this way inherits a scheme structure.

Using the same arguments we used to prove proposition \ref{prop:GLt-counterpart of A(1,2n)}, we can deduce the following result:
\begin{prop}\label{prop:counterpart of delta'}
	We have:
	\begin{enumerate}
		\item $D'$ is a $\GLt$-counterpart and a $\GLt\times\Gm$-counterpart of $\Delta'$.
		\item $V_n\setminus  D'$ is a $\GLt$-counterpart and the $\GLt\times\Gm$-counterpart of $\bA(1,2n)\setminus\Delta'$.
	\end{enumerate}
\end{prop}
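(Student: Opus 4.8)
The plan is to deduce both claims from the isomorphisms of quotient stacks already established in Propositions \ref{prop:GLt-counterpart of A(1,2n)} and \ref{prop:GLtxGm-counterpart of A(1,2n)}, by showing that under them the subscheme $D'$ matches $\Delta'$ and the complement matches the complement. First I would record that $D'$ is $\GLt$- and $\Gm$-invariant, hence $\GLt\times\Gm$-invariant: the $\GLt$-action sends $(q,f)$ to $(\det(A)\,q(A^{-1}\underline{X}),f(A^{-1}\underline{X}))$, carrying the intersection $V_+(q,f)$ to its image under the linear automorphism $A$ of $\PP^2$, which preserves singularity; and the $\Gm$-action $\lambda\cdot(q,f)=(q,\lambda f)$ leaves $V_+(q,f)$ unchanged. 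Thus $D'$ and $V_n\setminus D'$ descend to closed (resp.\ open) substacks of $[V_n/\GLt]$ and of $[V_n/\GLt\times\Gm]$.

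The heart of the argument is the following dictionary, extracted from the proof of Proposition \ref{prop:GLt-counterpart of A(1,2n)}. Under the isomorphism
$$ V_n\xrightarrow{\ \sim\ } [\bA(1,2n)^{\sim}/\PGLt]\times_{\cl{M}_0}\cl{S}, $$
a pair $(q,f)$ over $S$ corresponds to the smooth conic $Q=V_+(q)\subset\PP^2_S$, viewed as a family of rational curves, together with the isomorphism $i^*\OO(1)\simeq T_{Q/S}$ and the section of $\OO_Q(n)\simeq T_{Q/S}^{\otimes n}$ cut out by $f$. The zero divisor of this section is exactly the intersection scheme $V_+(q,f)=Q\cap\{f=0\}$. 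Passing to $\bA(1,2n)$ through the canonical trivialization $\phi^*T_{Q/S}\simeq\OO(2)$, the section becomes a binary form of degree $2n$, whose divisor on $\PP^1$ is non-reduced — i.e.\ the form lies in $\Delta'$ — if and only if $V_+(q,f)$ is singular, that is $(q,f)\in D'$; dually, the form has \'{e}tale zero locus over $S$ if and only if $V_+(q,f)\to S$ is smooth, i.e.\ $(q,f)\in V_n\setminus D'$. All of this is functorial in $S$, so it identifies the universal intersection divisor over $V_n$ with the universal zero divisor of the binary form over $\bA(1,2n)$.

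Combining these points, the isomorphism $[V_n/\GLt]\simeq[\bA(1,2n)/\PGLt]$ of Proposition \ref{prop:GLt-counterpart of A(1,2n)} restricts to isomorphisms $[D'/\GLt]\simeq[\Delta'/\PGLt]$ and $[(V_n\setminus D')/\GLt]\simeq[(\bA(1,2n)\setminus\Delta')/\PGLt]$, which is precisely the assertion that $D'$ (resp.\ $V_n\setminus D'$) is a $\GLt$-counterpart of $\Delta'$ (resp.\ of $\bA(1,2n)\setminus\Delta'$). For the $\GLt\times\Gm$-counterpart statements I would run the same comparison starting from Proposition \ref{prop:GLtxGm-counterpart of A(1,2n)}, additionally tracking the $\Gm$-action scaling $f$ (equivalently the binary form): since this action fixes $D'$ and matches the scaling on the $\bA(1,2n)$ side, the same restriction yields $[D'/\GLt\times\Gm]\simeq[\Delta'/\PGLt\times\Gm]$ together with its open analogue.

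The step I expect to be the main obstacle is verifying that $D'$ and the pullback of $\Delta'$ agree as subschemes of $V_n$, and not merely set-theoretically: one must check that the locus cut out by the Jacobian condition defining $D''$ (where $q(u)=f(u)=0$ and $\operatorname{rk}J(q,f)(u)$ is not maximal), after the two proper projections, coincides with the vanishing of the discriminant of the associated binary form. Concretely, for $u\in Q$ the rank of $J(q,f)$ drops exactly when $\{f=0\}$ is tangent to $Q$ at $u$, which is exactly when the divisor $V_+(q,f)$ fails to be reduced there, so the two loci share the same points in every family; since $\Delta'$ is the reduced discriminant hypersurface and $D'$ is the reduced image of $D''$, this point-matching pins them down. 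For the counterpart property in the sense of Definition \ref{def: GLt-counterpart of a scheme}, what is strictly required is only the isomorphism of quotient stacks, so this scheme-structure comparison is needed only to the extent of identifying the correct reduced substacks and of respecting the \'{e}tale-over-$S$ condition that separates the open and closed parts.
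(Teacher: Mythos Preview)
Your proposal is correct and follows essentially the same approach as the paper, which simply remarks that the result is ``almost immediate, using exactly the same arguments used to prove proposition \ref{prop:GLt-counterpart of A(1,2n)}''; you have spelled out precisely those arguments, namely that under the dictionary of that proposition the zero divisor $V_+(q,f)\subset Q$ corresponds to the zero divisor of the associated binary form, so the singular locus $D'$ matches $\Delta'$ and the complements match as well. Your discussion of invariance and of the scheme-structure subtlety is more careful than the paper's own treatment.
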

Let $\Delta\subset\PP(1,2n)$ denotes the image of $\Delta'$ via the projection $(\bA(1,2n)\setminus\{0\})\to\PP(1,2n)$, and let $D\subset\PP(V_n)$ be the projection of $D'$ along $(V_n\setminus\sigma_0)\to\PP(V_n)$.
\begin{cor}\label{cor:counterpart of delta}
	We have:
	\begin{enumerate}
		\item $D$ is a $\GLt$-counterpart of $\Delta$.
		\item $V_n\setminus  D$ is a $\GLt$-counterpart of $\PP(1,2n)\setminus\Delta$ .
	\end{enumerate}
\end{cor}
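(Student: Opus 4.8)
The plan is to mimic exactly the passage from the $\GLt\times\Gm$-counterpart statement to proposition \ref{prop:PV/GLt_iso_Hilb}: both $\Delta$ and $D$ are obtained from $\Delta'$ and $D'$ by deleting the appropriate zero locus and quotienting by the central $\Gm$, so the corollary should follow from proposition \ref{prop:counterpart of delta'} by ``taking the $\Gm$-quotient in stages''. The one general fact I would invoke is that if a central subgroup $\Gm\hookrightarrow G$ acts freely on a scheme $Y$, then $[Y/G]\simeq[(Y/\Gm)/(G/\Gm)]$; applying this with $G=\GLt\times\Gm$ (so $G/\Gm=\GLt$) and with $G=\PGLt\times\Gm$ (so $G/\Gm=\PGLt$) reduces the statement to checking that the relevant $\Gm$-actions are free and that the set-theoretic quotients are the schemes named in the corollary.

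Next I would record the elementary geometric facts. The subscheme $\Delta'\subset\bA(1,2n)$ is a cone, hence $\Gm$-invariant, and it contains the origin; therefore $\bA(1,2n)\setminus\Delta'$ misses $0$, the $\Gm$-action on it is free, and $(\bA(1,2n)\setminus\Delta')/\Gm=\PP(1,2n)\setminus\Delta$, while $(\Delta'\setminus\{0\})/\Gm=\Delta$. On the counterpart side, $D'\subset V_n$ is invariant for the action $\lambda\cdot(q,f)=(q,\lambda f)$, since rescaling $f$ does not change the singular locus of $V_+(q,f)$; and since the counterpart construction is formed by base change along $\cl{S}\to\cl{M}_0$ (resp. $\cl{S}\to\cl{M}_0\times\cl{B}\Gm$), which preserves immersions, the inclusion $\{0\}\subset\Delta'$ corresponds to $\sigma_0\subset D'$. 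Hence $V_n\setminus D'$ misses the zero section, the $\Gm$-action on it is free with quotient $(V_n\setminus D')/\Gm=\PP(V_n)\setminus D$, and $(D'\setminus\sigma_0)/\Gm=D$.

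With these identifications the two assertions follow formally. For part $(2)$, proposition \ref{prop:counterpart of delta'}$(2)$ gives $[(V_n\setminus D')/(\GLt\times\Gm)]\simeq[(\bA(1,2n)\setminus\Delta')/(\PGLt\times\Gm)]$; taking the $\Gm$-quotient in stages on both sides, which is legitimate by the freeness just checked, turns this into $[(\PP(V_n)\setminus D)/\GLt]\simeq[(\PP(1,2n)\setminus\Delta)/\PGLt]$, which is precisely the statement that $\PP(V_n)\setminus D$ is a $\GLt$-counterpart of $\PP(1,2n)\setminus\Delta$. For part $(1)$, I would first restrict the isomorphism $[D'/(\GLt\times\Gm)]\simeq[\Delta'/(\PGLt\times\Gm)]$ of proposition \ref{prop:counterpart of delta'}$(1)$ to the invariant open complements of $\sigma_0$ and of $\{0\}$, and then quotient by $\Gm$ in stages to obtain $[D/\GLt]\simeq[\Delta/\PGLt]$.

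The only point requiring genuine care, and which I would isolate as a lemma if pressed, is the quotient-in-stages isomorphism $[Y/(H\times\Gm)]\simeq[(Y/\Gm)/H]$ for a free $\Gm$-action commuting with $H$; everything else is a bookkeeping check that the named geometric quotients agree. I note also that the object appearing in part $(2)$ should be read as $\PP(V_n)\setminus D$, since $D$ is a subscheme of $\PP(V_n)$.
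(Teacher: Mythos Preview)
Your proposal is correct and follows exactly the approach the paper intends: the corollary is stated without proof, and the passage immediately preceding proposition \ref{prop:PV/GLt_iso_Hilb} makes clear that the deduction is by taking the $\Gm$-quotient of the $\GLt\times\Gm$-counterpart statement, which is precisely what you do. Your observation that part (2) should read $\PP(V_n)\setminus D$ rather than $V_n\setminus D$ is also correct; this is a typo in the paper.
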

\subsection{Some comparison results}\label{subsec:comp res}
In the previous subsection, we found that $\PP(V_n)$ is a $\GLt$-counterpart (see definition \ref{def: GLt-counterpart of a scheme}) of the $\PGLt$-scheme $\PP(1,2n)$. We want now to apply theorem \ref{cor:chow diagrams} to this particular case.
There are two relevant classes of morphisms that we want to consider, which are
\begin{align*}
\psi_{n}:\PP(1,2n)\longmapsto\PP(1,4n),&\quad f\longmapsto f^2 \\
\psi_{n,m}:\PP(1,2n)\times\PP(1,2m)\longrightarrow\PP(1,2n+2m),&\quad (f,g)\longmapsto fg 
\end{align*}
All these maps are $\PGLt$-equivariant and it is immediate to verify that $\GLt$-counterparts of these morphisms (see definition \ref{def:GLt counterpart of a morphism}) are
\begin{align*}
\psi'_n:\PP(V_n)\longmapsto \PP(V_{2n}),&\quad (q,f)\longmapsto (q,f^2)\\
\psi'_{n,m}:\PP(V_n)\times_{\cl{S}}\PP(V_m)\longrightarrow \PP(V_{n+m}),&\quad (q,f,g)\longmapsto(q,fg)
\end{align*}
All the morphisms involved are proper, and theorem \ref{cor:chow diagrams} gives us the following commutative diagrams of equivariant Chow rings:
$$\xymatrix{
	A^*_{\GLt}(\PP(V_n)) \ar[r]^{\psi'_{n*}} \ar[d]^*[@]{\cong} & A^*_{\GLt}(\PP(V_{2n})) \ar[d]^*[@]{\cong} \\
	A^*_{\PGLt}(\PP(1,2n)) \ar[r]^{\psi_{n*}} & A^*_{\PGLt}(\PP(1,4n)) }$$
$$\xymatrix{
	A^*_{\GLt}(\PP(V_n))\otimes_{A^*_{\GLt}(\cl{S})}A^*_{\GLt}(\PP(V_m)) \ar[r]^{\hspace{1cm}\psi'_{n,m*}} \ar[d]^*[@]{\cong} & A^*_{\GLt}(\PP(V_{n+m})) \ar[d]^*[@]{\cong} \\
	A^*_{\PGLt}(\PP(1,2n))\otimes_{A^*_{\PGLt}} A^*_{\PGLt}(\PP(1,2m)) \ar[r]^{\hspace{1.2cm}\psi_{n,m*}} & A^*_{\PGLt}(\PP(1,2n+2m)) }$$
Every morphism obtained composing $\psi_{n,m}$ and $\psi_{n}$ induces a commutative diagram as the ones above. This will be one of the key tools used to compute the Chow ring of $\cl{H}_g$.
\subsection{The main result}
We are ready to give a new presentation of the stack $\cl{H}_g$ as a quotient stack.
\begin{thm}
	\label{thm:presentation_Hg}
	Let $U':=V_{g+1}\setminus D'$. Then we have an isomorphism $$\cl{H}_g\simeq [U'/\GLt\times\Gm]$$ where the action on $U'$ is given by the formula $$(A,\lambda)\cdot(q,f)=(\det(A)q(A^{-1}\underline{X}),\lambda^{-2}f(A^{-1}\underline{X}))$$ 
\end{thm}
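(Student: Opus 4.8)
The plan is to deduce the statement from the presentation $\cl{H}_g\simeq[\bA(1,2g+2)\setminus\Delta'/\PGLt\times\Gm]$ of \cite{ArsVis} recalled above, together with the counterpart results of the previous subsections. Almost everything is already in place. Proposition \ref{prop:counterpart of delta'}.(2) identifies $V_{g+1}\setminus D'$ as a $\GLt\times\Gm$-counterpart of $\bA(1,2g+2)\setminus\Delta'$, and Proposition \ref{prop:GLt-counterpart of A(1,2n)} shows that the $\GLt$-part of the action is $(q,f)\mapsto(\det(A)q(A^{-1}\underline{X}),f(A^{-1}\underline{X}))$, so that the twist $\det(A)^{g+1}$ appearing on the source of \cite{ArsVis} is absorbed into the geometry of the universal conic and does not reappear on $f$. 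Hence the only discrepancy between what has been proved and what the theorem asserts is the character by which $\Gm$ acts: in Proposition \ref{prop:GLtxGm-counterpart of A(1,2n)} the group $\Gm$ scales the section by $\lambda$, whereas the action of \cite{ArsVis} scales the binary form by $\lambda^{-2}$.

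First I would record that the construction of the $\GLt\times\Gm$-counterpart is insensitive to the particular character used for the $\Gm$-action, in the following precise sense. In the proof of Proposition \ref{prop:GLtxGm-counterpart of A(1,2n)} the $\Gm$-action is carried through the chain of identifications $\sigma\leftrightarrow f$ relating the distinguished section $\sigma$ of the source (a section of the line bundle $M$, identified with $T_{C'/S}^{\otimes n}$) to the section $f$ of $\OO_Q(n)$ on the counterpart side. This correspondence is $\OO_S$-linear in the section, so if one instead lets $\Gm$ act on $\bA(1,2n)$ through an arbitrary character $\chi\colon\Gm\to\Gm$, say $\lambda\cdot f=\chi(\lambda)f$, the same argument verbatim produces the counterpart $V_n$ on which $\Gm$ acts by $\lambda\cdot(q,f)=(q,\chi(\lambda)f)$. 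In short, twisting the $\Gm$-action on the source by $\chi$ twists the $\Gm$-action on the counterpart by the very same $\chi$.

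Then I would apply this with $\chi(\lambda)=\lambda^{-2}$ and $n=g+1$. The removal of the singular loci is as in Proposition \ref{prop:counterpart of delta'}: since $V_+(q,f)$ is unchanged under rescaling $f$ by any nonzero scalar, the subscheme $D'$ is invariant under the $\lambda^{-2}$-action just as it was under the $\lambda$-action, so no modification is needed there. I therefore obtain that $U'=V_{g+1}\setminus D'$, equipped with the action $(A,\lambda)\cdot(q,f)=(\det(A)q(A^{-1}\underline{X}),\lambda^{-2}f(A^{-1}\underline{X}))$, is a $\GLt\times\Gm$-counterpart of $\bA(1,2g+2)\setminus\Delta'$ carrying the action of \cite{ArsVis}. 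Combining the resulting isomorphism $[U'/\GLt\times\Gm]\simeq[\bA(1,2g+2)\setminus\Delta'/\PGLt\times\Gm]$ with the presentation of \cite{ArsVis} gives $\cl{H}_g\simeq[U'/\GLt\times\Gm]$, as claimed.

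The main obstacle, and the only point requiring genuine care, is the second paragraph: justifying that the counterpart isomorphism stays equivariant after replacing $\lambda$ by $\lambda^{-2}$. This is \emph{not} a formal consequence of functoriality, since $\lambda\mapsto\lambda^{-2}$ is not an automorphism of $\Gm$ (its kernel is $\mu_2$); it must instead be read off from the explicit, character-independent way the $\Gm$-action is transported along the identification $\sigma\leftrightarrow f$ in the proof of Proposition \ref{prop:GLtxGm-counterpart of A(1,2n)}. Once one checks that this identification is linear in the section and commutes with rescaling by any fixed scalar, the twisted equivariance follows at once and the remainder is assembly.
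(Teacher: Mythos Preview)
Your proof is correct, and it takes a somewhat different route than the paper's. The paper handles the $\lambda^{-2}$ twist by introducing a new auxiliary stack $\bA(1,2n)'$ whose objects carry a line bundle $L$ of degree $-n/2$ (this uses that $n=g+1$ is even) together with a section of $L^{-\otimes 2}$ and a trivialization $\psi:L\simeq T_{C'/S}^{-\otimes n/2}$. Here $\Gm$ acts with weight $1$ on $\psi$, so the induced action on the section is multiplication by $\lambda^{-2}$; the argument of Proposition~\ref{prop:GLtxGm-counterpart of A(1,2n)} then runs through with $\bA(1,2n)'$ in place of $\bA(1,2n)''$. Your approach instead bypasses this auxiliary stack entirely: you use that the $\GLt$-counterpart isomorphism of Proposition~\ref{prop:GLt-counterpart of A(1,2n)} identifies the section $\sigma$ with $f$ $\OO_S$-linearly, hence is $\Gm$-equivariant for $\Gm$ acting through \emph{any} character on the section, and then you specialize to $\chi(\lambda)=\lambda^{-2}$.

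Your route is shorter for the bare isomorphism. The paper's detour through $\bA(1,2n)'$ buys something extra, however: because the objects of $\bA(1,2n)'$ already carry the line bundle $L$ and the trivialization $\psi$, the construction directly exhibits $U'\to\cl{H}_g$ as the torsor associated with the rank $4$ bundle $\cl{E}\oplus\cl{L}$ described after the theorem, which is used at the end of the paper to interpret $\tau,c_2,c_3$ geometrically. One small wording issue: you say ``the same argument verbatim'' referring to Proposition~\ref{prop:GLtxGm-counterpart of A(1,2n)}, but the paper explicitly notes that the specific equivalence $\bA(1,2n)\simeq\bA(1,2n)''$ used there (with its fixed $\Gm$-action on $\psi$) fails to be equivariant for $\lambda^{-2}$; your real argument, as your last paragraph makes clear, rests on the linearity of $\sigma\leftrightarrow f$ coming from Proposition~\ref{prop:GLt-counterpart of A(1,2n)}, not on rerunning the $\bA(1,2n)''$ machinery.
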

\begin{proof}
	Recall the presentation of \cite{ArsVis}:
	$$ \cl{H}_g\simeq [\bA(1,2g+2)\setminus\Delta'/\PGLt\times\Gm]$$
	with action defined as
	$$ (A,\lambda)\cdot f(x,y):=\lambda^{-2}\det(A)^{g+1}f(A^{-1}(x,y)) $$
	Proposition \ref{prop:counterpart of delta'}.(2) tells us that a $\GLt\times\Gm$-counterpart of $\bA(1,2g+2)\setminus\Delta'$, with $\Gm$ acting by simple multiplication, is $V_{g+1}\setminus D'$, where the action of $\GLt\times\Gm$ is:
	$$(A,\lambda)\cdot (q,f):=(\det(A)q(A^{-1}(x,y,z),\lambda f(A^{-1}(x,y,z))$$
	It is easy to see that the $\GLt\times\Gm$-counterpart of $\bA(1,2g+2)\setminus\Delta'$ with $\Gm$ acting by $\lambda^{-2}$ is $V_{g+1}\setminus D'$, with $\Gm$ acting by multiplication for $\lambda^{-2}$.
\end{proof}
The theorem above can be rephrased by saying that $V_{g+1}\setminus D'$ is a $\GLt\times\Gm$-torsor over $\cl{H}_g$. It is well known that to every $\GLt\times\Gm$-torsor one can associate a rank $4$ vector bundle of the form $\cl{E}\oplus \cl{L}$, where $\cl{E}$ is a rank $3$ vector bundle and $\cl{L}$ is a line bundle, and viceversa. We want to find out what is the vector bundle over $\cl{H}_g$ associated to $V_{g+1}\setminus D'$.

Observe that $V_{g+1}\setminus D'$, seen as a stack in sets, has as objects the triples $(S,q,f)$ where:
\begin{itemize}
	\item $S$ is a scheme.
	\item $q$ is a global section of $\OO_{\PP^2_S}(2)$ whose zero locus $Q\subset\PP^2_S$ is smooth over $S$.
	\item $f$ is a global section of $\OO_Q(g+1)$ whose zero locus is \'{e}tale over $S$.
\end{itemize}
and $\GLt\times\Gm$ acts as described in theorem \ref{thm:presentation_Hg}. This stack is equivalent to the stack $\cl{P}$ whose objects are
$$ ((\boldsymbol{D}),\varphi,L,\sigma,\alpha)$$
where:
\begin{enumerate}
	\item  $(\boldsymbol{D})$ is a commutative diagram of the form
	$$ \xymatrix{
		C' \ar@{^{(}->}[r]^i \ar[d]^{\pi} & \PP^2_S \ar[dl] \\
		S
	}$$
	with $C'\to S$ a family of rational curves and $i$ a closed immersion. 
	\item  $\varphi:i^*\OO(1)\simeq T_{C'/S}$.
	\item $L$ is a line bundle over $C'$ of degree $-(g+1)$.
	\item $\sigma$ is a global section of $L^{-\otimes 2}$.
	\item $\alpha:\pi_*(L^{-1}\otimes T_{C'/S}^{-\otimes (g+1)/2})\simeq \OO_S$.
\end{enumerate}
The elements (1) and (2) above induce by lemma \ref{lm:triv} an isomorphism $\beta:\pi_*T_{C'/S}\simeq\OO_S^{\oplus 3}$, and vice versa. Therefore, it is easy to prove that the stack $\cl{P}$ is equivalent to the stack $\cl{P}'$ whose objects are
$$ (\pi:C'\to S,L,\sigma,\alpha,\beta)$$
where:
\begin{itemize}
	\item $\pi:C'\to S$ is a family of rational curves.
	\item $L$ is a line bundle of degree $-(g+1)$ over $C'$.
	\item $\sigma$ is a global section of $L^{-\otimes 2}$.
	\item $\alpha:\pi_*(L^{-1}\otimes T_{C'/S}^{-\otimes (g+1)/2})\simeq \OO_S$.
	\item $\beta:\pi_*T_{C'/S}\simeq\OO_S^{\oplus 3}$.
\end{itemize}
Let $\cl{H}_g^{\sim}$ be the stack whose objects are triples $(C'\to S,L,\sigma)$, where $C'\to S$ is a family of rational curves, $L$ is a line bundle on $C'$ of degree $-g-1$ and $\sigma$ is a global section of $L^{-\otimes 2}$ whose support is \'{e}tale on $S$. In \cite{ArsVis}, the authors proved that $\cl{H}_g\simeq \cl{H}_g^{\sim}$. 

There is a morphism $\cl{P}'\to\cl{H}_g^{\sim}$ defined as:
$$ (\pi:C'\to S,L,\sigma,\alpha,\beta) \longmapsto (\pi:C'\to S,L,\sigma) $$
that realizes $\cl{P}'$ as a $\GLt\times\Gm$-torsor over $\cl{H}_g^{\sim}$, because $\GLt$ acts by multiplication on $\beta$ and $\Gm$ acts by multiplication on $\alpha$.

This description of $\cl{P}'\to\cl{H}_g^{\sim}$ allows us to determine the associated rank $4$ vector bundle: it coincides with $\cl{E}\oplus\cl{L}$, where $\cl{E}$ is the rank $3$ vector bundle over $\cl{H}_g^{\sim}$ functorially defined as:
$$\cl{E}:(\pi:C'\to S,L,\sigma)\longmapsto \pi_*T_{C'/S}$$
and $\cl{L}$ is the line bundle over $\cl{H}_g^{\sim}$ functorially defined as
$$ \cl{L}:(\pi:C'\to S,L,\sigma)\longmapsto \pi_*(L^{-1}\otimes T_{C'/S}^{-\otimes (g+1)/2}) $$

We may ask for a description of the vector bundles $\cl{E}$ and $\cl{L}$ as vector bundles over $\cl{H}_g$. This can be easily deduced from the description we gave before: indeed, if $C\to S$ is a family of hyperelliptic curves of genus $g$ which is a double cover of $C'\to S$ via the morphism $\eta:C\to C'$, and if $W$ is the associated Weierstrass divisor, then:
\begin{enumerate}
	\item $\eta^*T_{C'/S}\simeq\omega_{C/S}^{-1}\otimes\OO(W)$.
	\item $\eta^*L\simeq \OO\left(-\frac{g+1}{2}W\right)$.
\end{enumerate}
From the formulas above we see that the vector bundle $\cl{E}$, seen as a vector bundle over $\cl{H}_g$, is functorially defined as
$$\cl{E}((\pi:C\to S,\iota))=\pi_*\omega_{C/S}^{-1}\left(W\right)$$ 
whereas $\cl{L}$, seen as a line bundle over $\cl{H}_g$, is functorially defined as
$$\cl{L}((\pi:C\to S,\iota))=\pi_*\omega_{C/S}^{\otimes \frac{g+1}{2}}\left(\frac{1-g}{2}W\right) $$
These considerations will be used at the end of the paper in order to provide a geometrical description of the generators of the Chow ring of $\cl{H}_g$.
\section{Intersection theory of $\PP(V_n)$}\label{sec:proj bundles}
\noindent
Let $\cl{S}$ denote the open subscheme of $\bA(2,2)$ parametrising quadratic ternary forms of rank $3$. The aim of this section is to study the vector bundles $\PP(V_n)$ on $\cl{S}$ that were introduced in the previous section (see definition \ref{def:Vn}). In  particular, in the first subsection we concentrate on the geometry of $\PP(V_n)$, and we show that over certain particular open subschemes of $\cl{S}$ the bundles $V_n$ become trivial (lemma \ref{lm:open_that_triv}). We also study some interesting morphisms between $\PP(V_n)$ for different $n$.

In the second subsection we do some computations in the $T$-equivariant Chow ring of $\PP(V_n)$, where $T\subset\GLt$ is the subgroup of diagonal matrices, focusing on the cycle classes of some specific $T$-invariant subvarieties (lemmas \ref{lm:Z_classes}, \ref{lm:Z_classes_intersection} and \ref{lm:pushforward_W_classes}). All these results will be needed for computing the Chow ring of $\cl{H}_g$, which is done in the last three sections.
\subsection{Properties of $\PP(V_n)$}\label{subsec:prop of P(Vn)}
We will use the following notational shorthand: an underlined letter $\underline{i}$ will indicate a triple $(i_0,i_1,i_2)$, and the expression $\underline{X}^{\underline{i}}$ will indicate the monomial $X_0^{i_0}X_1^{i_1}X_2^{i_2}$. A form $f$ of degree $n$ in three variables with coefficients in a ring $R$ can then be expressed as $f=\sum b_{\underline{i}}\underline{X}^{\underline{i}}$, where the $b_{\underline{i}}$ are elements of $R$ and the sum is taken over the triples $\underline{i}$ such that $|i|:=i_0+i_1+i_2=n$. The coefficients $b_{\underline{i}}$ give us coordinates in $\bA(2,n)$, thus homogeneous coordinates in $\PP(2,n)$. The symbols $a_{\underline{i}}$ will be used only for the coefficients of quadrics, or equivalently for the coordinates of $\bA(2,2)$. Finally, we say that  $\underline{i}\leq\underline{j}$ iff $i_{\alpha}\leq j_{\alpha}$ for $\alpha=0,1,2$. This is equivalent to the condition $\underline{X}^{\underline{i}}|\underline{X}^{\underline{j}}$.

Let $\cl{S}_{\underline{i}}$ be the open subscheme of $\cl{S}$ where the coordinate $a_{\underline{i}}$ is not zero, and let $Y_{\underline{i}}$ be its complement. It can be easily checked that the open subschemes $\cl{S}_{(0,2,0)}$, $\cl{S}_{(0,1,1)}$ and $\cl{S}_{(0,0,2)}$ constitute an open covering of $\cl{S}$: indeed, a point not in the union of these three open subschemes will necessarily parametrise a quadric divisible by $X_0$, thus not smooth. The open subschemes $\cl{S}_{\underline{i}}$ share another property, expressed in the following lemma:
\begin{lm}\label{lm:open_that_triv}
	The projective bundles $\PP(V_n)$ are trivial over the open sets $\cl{S}_{\underline{i}}$.
\end{lm}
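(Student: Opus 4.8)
The plan is to reduce the statement to the freeness of the vector bundle $V_n$ itself over $\cl{S}_{\underline{i}}$, and then to establish that freeness by a stable–triviality argument rather than by exhibiting an explicit frame. First I would note that $\cl{S}_{\underline{i}}$ is an open subscheme of the affine space $\bA(2,2)=\mathbb{A}^6$, obtained by deleting the discriminant hypersurface together with the hypersurface $\{a_{\underline{i}}=0\}$; since $\mathbb{A}^6$ is factorial and these are principal divisors, $\operatorname{Pic}(\cl{S}_{\underline{i}})=0$. Consequently a vector bundle on $\cl{S}_{\underline{i}}$ is trivial precisely when its projectivization is, so it suffices to prove that $V_n|_{\cl{S}_{\underline{i}}}$ is a free $\OO_{\cl{S}_{\underline{i}}}$-module.

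The naive attempt is to trivialize $V_n$ by the degree-$n$ monomials not divisible by $\underline{X}^{\underline{i}}$, reducing an arbitrary form modulo $q$ by repeatedly substituting for $\underline{X}^{\underline{i}}$ (legitimate because $a_{\underline{i}}$ is invertible on $\cl{S}_{\underline{i}}$). This works whenever $\underline{X}^{\underline{i}}$ is the leading monomial of $q$ for some term order, as for the ``square'' indices like $(0,0,2)$, where the reduction terminates and these monomials do give a frame. For a ``mixed'' index such as $\underline{i}=(0,1,1)$, however, \emph{no} term order has $X_1X_2$ as leading monomial, the reduction need not terminate, and the candidate monomials actually become linearly dependent in $V_n$ over the locus $\{a_{(0,1,1)}^2=a_{(0,2,0)}a_{(0,0,2)}\}$ (for instance $q=X_0^2+X_1^2+X_1X_2+X_2^2$ gives $q\,(X_1-X_2)=X_0^2X_1-X_0^2X_2+X_1^3-X_2^3$, a nonzero standard form divisible by $q$). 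Thus an explicit monomial frame cannot work uniformly, and this is \textbf{the main obstacle}.

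To circumvent it I would argue that $V_n$ is \emph{stably free}. The defining short exact sequence
$$0\longrightarrow \bA(2,n-2)\xrightarrow{\ \cdot q\ }\bA(2,n)\longrightarrow V_n\longrightarrow 0$$
is a sequence of vector bundles whose two left terms are the \emph{trivial} bundles $\OO_{\cl{S}}^{\oplus\binom{n}{2}}$ and $\OO_{\cl{S}}^{\oplus\binom{n+2}{2}}$; as $V_n$ is locally free the sequence splits, so $V_n\oplus\OO_{\cl{S}_{\underline{i}}}^{\oplus\binom{n}{2}}\simeq\OO_{\cl{S}_{\underline{i}}}^{\oplus\binom{n+2}{2}}$, i.e. $V_n$ is stably free of rank $2n+1$. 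Now $\cl{S}_{\underline{i}}$ is a smooth affine variety of dimension $6$, so by Bass' cancellation theorem every finitely generated stably free module of rank $>6$ over its coordinate ring is free; since $2n+1>6$ for $n\geq 3$, this gives that $V_n|_{\cl{S}_{\underline{i}}}$ is free, hence $\PP(V_n)$ is trivial over $\cl{S}_{\underline{i}}$. The remaining cases are immediate: $V_1=\bA(2,1)$ is already trivial, and for $n=2$ there is a single relation $q$ with invertible coefficient $a_{\underline{i}}$, so the five monomials other than $\underline{X}^{\underline{i}}$ furnish a global frame. This settles all $n$ uniformly, for every index $\underline{i}$ including the mixed ones.
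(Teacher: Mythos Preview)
Your proof is correct, and it takes a genuinely different route from the paper. The paper does \emph{exactly} what you call the ``naive attempt'': it asserts (as its Lemma~\ref{lm:lin_alg_vspace_poly}) that for any $\underline{i}$ with $|\underline{i}|=2$ and any quadric $q$ with $a_{\underline{i}}\neq 0$, the monomials of degree $n$ not divisible by $\underline{X}^{\underline{i}}$ together with the products $q\cdot m$ (for $m$ a degree-$(n-2)$ monomial) form a basis of the degree-$n$ forms, and then reads off the trivialization of $V_n|_{\cl{S}_{\underline{i}}}$ from the monomials in $B_n$. Your counterexample with $\underline{i}=(0,1,1)$ and $q=X_0^2+X_1^2+X_1X_2+X_2^2$ is valid and shows that the paper's Lemma~\ref{lm:lin_alg_vspace_poly} is actually false for the mixed indices: the relation $q(X_1-X_2)=X_0^2X_1-X_0^2X_2+X_1^3-X_2^3$ is a nontrivial linear dependence among $B_3\cup B_3'$, so the paper's ``it can be proved that the determinant of $M$ is invertible'' fails exactly on the hypersurface $a_{(0,1,1)}^2=a_{(0,2,0)}a_{(0,0,2)}$ you identified. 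Thus your argument does more than offer an alternative---it repairs a genuine gap in the paper's proof of the lemma as stated.

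Two remarks on scope. First, the paper only ever \emph{uses} the trivialization over the square index $\cl{S}_{(0,0,2)}$ (in the computation of the classes $[W_{n,r,l}]$), and for square indices the monomial frame does work by the Gr\"obner/leading-term argument you sketched; so the downstream results are unaffected. Second, your stable-freeness plus Bass cancellation argument is clean and uniform in $\underline{i}$: since $\cl{S}_{\underline{i}}=D(\Delta\cdot a_{\underline{i}})\subset\bA^6$ is a principal affine open, it is the spectrum of a UFD, giving $\operatorname{Pic}=0$; the defining sequence $0\to\OO^{\binom{n}{2}}\to\OO^{\binom{n+2}{2}}\to V_n\to 0$ splits on this affine scheme, so $V_n$ is stably free of rank $2n+1$, and Bass cancellation over a $6$-dimensional ring finishes the case $n\ge 3$, with $n=1,2$ handled directly as you do. What the paper's approach buys, when it applies (square indices), is an explicit coordinate frame that is used verbatim in the later cycle-class computations; what your approach buys is a proof that actually covers all the $\cl{S}_{\underline{i}}$.
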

The proof of the lemma above relies on a lemma of linear algebra concerning the vector spaces of forms in three variables of fixed degree.
\begin{lm}\label{lm:lin_alg_vspace_poly}
	Let $\underline{i}$ be a triple such that $|i|=2$ and let $B_n$ be the set of monomials of degree $n$ in three variables not divisible by $\underline{X}^{\underline{i}}$. Fix a quadric $q$ in three variables with non-zero coefficient $a_{\underline{i}}$. Define $B'_n$ to be the set of polynomials obtained by multiplying $q$ with a monomial of degree $n-2$ in three variables. Then the two sets $B_n$ and $B'_n$ are disjoint and $B_n \cup B'_n$ is a base for the vector space of homogeneous polynomials of degree $n$ in three variables with coefficients in a field $k$.
\end{lm}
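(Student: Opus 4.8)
The plan is to reduce the statement to a linear independence claim and then prove independence by a leading-monomial (triangularity) argument. Write $R_n=H^0(\PP^2_k,\OO(n))$ for the space of forms of degree $n$, so that $\dim_k R_n=\binom{n+2}{2}$. Since $k[X_0,X_1,X_2]$ is a domain and $q\neq 0$, multiplication by $q$ is injective, so $B'_n$ is in bijection with the monomials of degree $n-2$ and $|B'_n|=\binom{n}{2}$; on the other hand $B_n$ is the complement, inside the monomial basis of $R_n$, of the monomials divisible by $\underline{X}^{\underline{i}}$ (which are exactly $\underline{X}^{\underline{i}}$ times the monomials of degree $n-2$), so $|B_n|=\binom{n+2}{2}-\binom{n}{2}$. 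Hence $|B_n|+|B'_n|=\dim_k R_n$, and once disjointness is known it suffices to prove that $B_n\cup B'_n$ is linearly independent. Disjointness is immediate: each $q\cdot\underline{X}^{\underline{m}}\in B'_n$ has the nonzero component $a_{\underline{i}}\underline{X}^{\underline{i}+\underline{m}}$ along a monomial divisible by $\underline{X}^{\underline{i}}$, so it lies outside the span of $B_n$ and in particular is not one of its elements.

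For independence I would fix an integer weight (equivalently a monomial order) $w$ on the variables and read off leading terms. Suppose we can choose $w$ so that $\underline{X}^{\underline{i}}$ is the \emph{unique} leading monomial among the degree-$2$ monomials; then the leading term of $q$ is $a_{\underline{i}}\underline{X}^{\underline{i}}$, and for every monomial $\underline{X}^{\underline{m}}$ of degree $n-2$ the leading monomial of $q\cdot\underline{X}^{\underline{m}}$ is exactly $\underline{X}^{\underline{i}+\underline{m}}$, with coefficient $a_{\underline{i}}\neq 0$. As $\underline{X}^{\underline{m}}$ varies these leading monomials run over the distinct monomials of degree $n$ divisible by $\underline{X}^{\underline{i}}$, while the elements of $B_n$ are monomials — equal to their own leading terms — that are \emph{not} divisible by $\underline{X}^{\underline{i}}$. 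Thus the members of $B_n\cup B'_n$ have pairwise distinct leading monomials which together exhaust the monomial basis of $R_n$; a family of polynomials with distinct leading monomials is linearly independent, so $B_n\cup B'_n$ is a basis.

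The one genuinely delicate point — and the step I expect to be the main obstacle — is the existence of a weight $w$ making $\underline{X}^{\underline{i}}$ dominate. When $\underline{X}^{\underline{i}}=X_j^2$ is a pure square this is transparent: grade monomials by their degree in $X_j$ (breaking ties by any fixed order), so that $X_j^2$ strictly dominates every other degree-$2$ monomial and, for each $\underline{X}^{\underline{m}}$, the unique term of $q\cdot\underline{X}^{\underline{m}}$ of top $X_j$-degree is $a_{\underline{i}}X_j^2\underline{X}^{\underline{m}}$. This already settles the pure-square opens $\cl{S}_{(0,2,0)}$ and $\cl{S}_{(0,0,2)}$ from the covering (and, by symmetry of the variables, every $\cl{S}_{\underline{i}}$ with $\underline{X}^{\underline{i}}$ a square). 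The remaining case, $\underline{X}^{\underline{i}}=X_jX_k$ a mixed monomial (e.g. $\underline{i}=(0,1,1)$), is precisely where the leading-term method breaks down: no monomial order can make $X_jX_k$ dominate both $X_j^2$ and $X_k^2$ simultaneously, since $X_j^2\cdot X_k^2=(X_jX_k)^2$ forces $X_jX_k$ to sit between them. Here the hypothesis $a_{\underline{i}}\neq 0$ alone does not suffice, and I would have to bring in more structure of $q$ (its smoothness on $\cl{S}$, or a reduction to a pure-square chart). This mixed case is the part of the argument I would scrutinize most carefully before trusting the conclusion, as the independence of $B_n\cup B'_n$ is sensitive to the precise position of $q$ and not merely to $a_{\underline{i}}\neq 0$.
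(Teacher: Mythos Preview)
Your approach --- dimension count plus a triangularity/leading-term argument --- is essentially what the paper does: it considers the transition matrix $M$ from the monomial basis to $B_n\cup B'_n$ and claims $\det M$ is invertible. For a pure square $\underline{X}^{\underline{i}}=X_j^2$ your grading by $X_j$-degree shows $M$ is triangular with nonzero diagonal, giving $\det M=a_{\underline{i}}^{\binom{n}{2}}$; this case is correct, and it is the only one the paper actually uses downstream (the trivialization over $\cl{S}_{(0,0,2)}$ in the computation of the classes $[W_{n,r,l}]$).

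Your hesitation in the mixed case is not merely a limitation of the method --- the lemma as stated is \emph{false} there. Take $\underline{i}=(0,1,1)$, $n=3$, and $q=X_0^2+X_1^2+X_1X_2+X_2^2$: then $a_{\underline{i}}=1$ and $q$ is smooth (in characteristic $\neq 2,3$), yet
\[
qX_1-qX_2 \;=\; X_0^2X_1+X_1^3-X_0^2X_2-X_2^3
\]
lies in the span of $B_3$, giving a nontrivial relation in $B_3\cup B'_3$. More structurally, writing $M$ in block-triangular form one finds $\det M=\det N$, where $N$ records the components of $qf$ on monomials divisible by $\underline{X}^{\underline{i}}$; for $n=3$ and $\underline{i}=(0,1,1)$ a direct computation gives $\det N=a_{011}(a_{011}^2-a_{020}a_{002})$, which vanishes on a hypersurface meeting both $\{a_{011}\neq 0\}$ and the smooth locus. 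So neither $a_{\underline{i}}\neq 0$ alone nor smoothness of $q$ rescues the statement, and the paper's bare assertion that $\det M$ is invertible is incorrect for mixed $\underline{i}$. The honest fix is to restrict the lemma to pure-square $\underline{i}$, which you have proved and which suffices for everything the paper needs.
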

\begin{proof}
	The fact that $B_n$ and $B'_n$ are disjoint is obvious, because every monomial in $B'_n$ is divisible by $\underline{X}^{\underline{i}}$. The monomials form a base for the vector space of homogeneous polynomials of degree $n$ in three variables. Let $M$ be the matrix representing the unique linear transformation that sends the base of monomials to the set $B_n\cup B_n'$ in the following way: monomials not divisible by $\underline{X}^{\underline{i}}$ are sent to themselves, and monomials of the form $\underline{X}^{\underline{i}}f$ are sent to $qf$. Observe that, after possibly reordering the monomials of the form $\underline{X}^{\underline{i}}f$, the matrix $M$ will be upper triangular, with either $1$ or $a_{\underline{i}}$ on the diagonal: this shows that the determinant of $M$ is invertible, thus $B'_n\cup B_n$ is a base. 
\end{proof}
%\begin{proof}[Proof of lemma \ref{lm:open_that_triv}]
%We want to produce an isomorphism $V_n|_{\cl{S}_{\underline{i}}}\simeq \OO_{\cl{S}_{\underline{i}}}^{\oplus 2n+1}$. Consider the morphism
%$$ \OO_{\cl{S}_{\underline{i}}}^{\oplus (n+2)(n+1)/2}\simeq \bA_{\cl{S}_{\underline{i}}}(2,n)\longrightarrow \OO_{\cl{S}_{\underline{i}}}^{\oplus 2n+1},\quad f\longmapsto (b_{\underline{k}}(f))$$
%where the triples $\underline{k}$ satisfy $|\underline{k}|=n$ and $\underline{i}\nleq\underline{k}$. This morphism is surjective and lemma \ref{lm:lin_alg_vspace_poly} implies that its kernel is exactly the image of the morphism $$\bA_{\cl{S}_{\underline{i}}}(2,n-2)\longrightarrow\bA_{\cl{S}_{\underline{i}}}(2,n)$$
%that we introduced in the first section, when we defined the vector bundles $V_n$. From this we deduce the isomorphism $V_n|_{\cl{S}_{\underline{i}}}\simeq \OO_{\cl{S}_{\underline{i}}}^{\oplus 2n+1}$.
%\end{proof}
From lemma \ref{lm:lin_alg_vspace_poly} we see that over $S_{\underline{i}}$ the coordinates $b_{\underline{k}}$, for $|k|=n$ and $\underline{i}\nleq\underline{k}$, trivialize the vector bundle $V_n|_{\cl{S}_{\underline{i}}}$, thus proving lemma \ref{lm:open_that_triv}.
We now define some morphisms that will play an important role in the remainder of the paper: these morphisms are
$$\pi_{n,m}:\PP(1,2n)\times\PP(1,2m)\longrightarrow \PP(1,4n+2m),\quad (f,g)\longmapsto f^2g $$
whose $\GLt$-counterparts are
$$ \pi'_{n,m}:\PP(V_n)\times_{\cl{S}}\PP(V_m)\longrightarrow\PP(V_{2n+m}),\quad (q,f,g)\longmapsto (q,f^2g) $$
Applying theorem \ref{cor:chow diagrams} we deduce the following commutative diagrams of equivariant Chow rings:
$$\xymatrix{
	A^*_{\GLt}(\PP(V_n))\otimes_{A^*_{\GLt}(\cl{S})}A^*_{\GLt}(\PP(V_m)) \ar[r]^{\hspace{1cm}\pi'_{n,m*}} \ar[d]^*[@]{\cong} & A^*_{\GLt}(\PP(V_{2n+m})) \ar[d]^*[@]{\cong} \\
	A^*_{\PGLt}(\PP(1,2n))\otimes_{A^*_{\PGLt}} A^*_{\PGLt}(\PP(1,2m)) \ar[r]^{\hspace{1.2cm}\pi_{n,m*}} & A^*_{\PGLt}(\PP(1,4n+2m)) }$$

%\begin{rmk}
%	The morphism $\PP_{\cl{S}}(2,1)\times_{\cl{S}}\cdots\times_{\cl{S}}\PP_{\cl{S}}(2,1)\to \PP_{\cl{S}}(2,n)$ is not surjective, because its image is the locus of polynomials that can be expressed as a product of linear factors, which is obviously not always the case if we deal with homogeneous polynomials in more than two variables. This shows that it is really necessary to work with the non-trivial projective bundles of the form $\PP(V_n)$, in order to have surjectivity.
%\end{rmk}
Observe that we also have the following class of closed linear immersions of projective bundles:
$$j_{n,r,l}:\PP(V_{n-r-l})\hookrightarrow \PP(V_n),\quad(q,f)\longmapsto (q,X_0^rX_1^lf) $$
These embeddings are not $\GLt$-equivariant but only $T$-equivariant, where $T\subset \GLt$ is the maximal subtorus of diagonal matrices.
\begin{df}\label{def:Wnrl}
	We define the $T$-invariant, closed subscheme $W_{n,r,l}\subset\PP(V_n)$ as the image of $j_{n,r,l}$.
\end{df}
\subsection{Computations in the $T$-equivariant Chow ring of $\PP(V_n)$}
Let us introduce another little piece of notation: with $\underline{\lambda}$ we mean the triple $(\lambda_1,\lambda_2,\lambda_3)$. If $\underline{i}$ is another triple (most of the times, we will have $\underline{i}=(i_0,i_1,i_2)$), we indicate with $\underline{i}\cdot\underline{\lambda}$ their scalar product.

As already observed, on $\PP(V_n)$ there is a well defined action of $\GLt$, which induces an action of its maximal subtorus $T$ of diagonal matrices. Thus we can consider the $T$-equivariant Chow ring $A^*_T(\PP(V_n))$. If $Z\subset \PP(V_n)$ is a $T$-invariant subvariety, its $T$-equivariant cycle class will be denoted $[Z]$.

Recall that, as stated in the introduction, when dealing with $T$-equivariant Chow groups we denote $\lambda_i$, for $i=1,2,3$, the generators of $A^T({\rm Spec}(k))$ and we denote $c_i$ the elementary symmetric polynomials in $\lambda_1$, $\lambda_2$, $\lambda_3$.
\begin{lm}\label{lm:T-Chow of S}
	We have $A^*_T(\cl{S})=\ZZ[\lambda_1,\lambda_2,\lambda_3]/(c_1,2c_3)$.
\end{lm}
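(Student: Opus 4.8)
The plan is to compute $A^*_T(\cl{S})$ by relating it to the $T$-equivariant Chow ring of the ambient space $\bA(2,2)\setminus\{0\}$ and then account for the open complement $\cl{S}=\bA(2,2)\sm$. The starting point is that $\bA(2,2)$ is a $T$-representation (indeed a $\GLt$-representation): it is the space of quadratic forms in $X_0,X_1,X_2$, with the action $A\cdot q(\underline{X})=\det(A)q(A^{-1}\underline{X})$. Since $\bA(2,2)$ is an affine space on which $T$ acts linearly, it is $T$-equivariantly contractible, so $A^*_T(\bA(2,2))=A^*_T({\rm Spec}(k))=\ZZ[\lambda_1,\lambda_2,\lambda_3]$. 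The task therefore reduces to understanding the two successive excisions: first removing the origin to get $\bA(2,2)\setminus\{0\}$, and then removing the singular (non-smooth) quadrics to land on $\cl{S}$.

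First I would decompose the complement $\bA(2,2)\setminus\cl{S}$ into $T$-invariant pieces according to the rank of the quadric. The locus of non-smooth quadrics is the discriminant hypersurface $\{0\}\cup\{\text{rank}\le 2\}$; its irreducible components as a $T$-variety are governed by the weights of the coordinates $a_{\underline{i}}$, which are $-c_1+\underline{i}\cdot\underline{\lambda}$ for $|i|=2$ (the factor $\det(A)$ contributing the $-c_1$, equivalently the shift coming from the determinant twist). The excision sequence
\begin{equation*}
A^{*-c}_T(\bA(2,2)\setminus\cl{S})\longrightarrow A^*_T(\bA(2,2)\setminus\{0\})\longrightarrow A^*_T(\cl{S})\longrightarrow 0
\end{equation*}
expresses $A^*_T(\cl{S})$ as a quotient by the image of the fundamental classes of the boundary components, so the computation amounts to identifying those classes as polynomials in $\lambda_1,\lambda_2,\lambda_3$. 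The relation $c_1=0$ should emerge from the first excision removing the origin: the Euler class of the representation $\bA(2,2)$ with the determinant twist cuts out the codimension-one piece, and tracking the determinant factor $\det(A)$ is what forces $c_1=\lambda_1+\lambda_2+\lambda_3$ to vanish. The relation $2c_3=0$ should then come from the discriminant, whose equation is degree $3$ in the $a_{\underline{i}}$ and whose $T$-weight, after using $c_1=0$, works out to $2c_3$ — the factor of $2$ reflecting that the rank-$\le 2$ locus is cut out by a discriminant that is (up to the determinant normalization) the square-class $2c_3$ rather than $c_3$ itself.

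The cleanest route, however, may be to avoid the full discriminant and instead use the open covering $\cl{S}=\cl{S}_{(0,2,0)}\cup\cl{S}_{(0,1,1)}\cup\cl{S}_{(0,0,2)}$ established just before the statement, together with a Mayer–Vietoris / localization argument. On each $\cl{S}_{\underline{i}}$ one coordinate $a_{\underline{i}}$ is invertible, and inverting a coordinate of weight $-c_1+\underline{i}\cdot\underline{\lambda}$ imposes exactly that relation; intersecting the three relations coming from the three charts yields the ideal $(c_1,2c_3)$. Concretely, I would verify that $-c_1+(0,2,0)\cdot\underline\lambda=2\lambda_2-c_1$, $-c_1+(0,1,1)\cdot\underline\lambda=\lambda_2+\lambda_3-c_1$, and $-c_1+(0,0,2)\cdot\underline\lambda=2\lambda_3-c_1$ must all be inverted, and compute the resulting quotient of $\ZZ[\lambda_1,\lambda_2,\lambda_3]$ via a self-intersection (Koszul) computation on the pairwise and triple overlaps.

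**The hard part will be** getting the torsion relation $2c_3=0$ with the correct factor of $2$, rather than $c_3=0$, and pinning down the precise $T$-weights after the determinant twist. This is exactly where an honest weight bookkeeping is indispensable: one must correctly identify which products of the boundary weights generate the image of the pushforward, and show that the resulting ideal is generated by $c_1$ and $2c_3$ and nothing smaller. I expect the determinant factor in the action to shift every weight by $-c_1$, so that after imposing $c_1=0$ the three chart-weights become $2\lambda_2,\lambda_2+\lambda_3,2\lambda_3$, whose least common combination producing a relation in the elementary symmetric polynomials is $2\lambda_2\cdot\lambda_2+\lambda_3\cdot\,2\lambda_3$-type expression collapsing to $2c_3$ modulo $c_1$. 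Verifying that no $c_3$ (without the $2$) relation sneaks in — i.e. that the $2$-torsion is genuine and not an artifact — is the subtle point, and I would double-check it against the known structure of $A^*_{\GLt}({\rm Spec}(k))=\ZZ[c_1,c_2,c_3]$ by restricting along $T\subset\GLt$.
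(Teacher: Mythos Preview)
Your plan has the roles of the two relations reversed, and this is a genuine error. Removing the origin from the six-dimensional representation $\bA(2,2)$ imposes a degree-$6$ relation (the $T$-equivariant Euler class, i.e.\ the product of all six weights), not the degree-$1$ relation $c_1$. Conversely, the discriminant hypersurface $\Sigma=\{\text{rank}\le 2\}$ has fundamental class equal to $\pm c_1$, not $2c_3$: the discriminant $\det(M_q)$ is a cubic in the coefficients, but what matters for the divisor class is the \emph{character} by which it transforms, and one computes directly (using $M_{A\cdot q}=\det(A)(A^{-1})^t M_q A^{-1}$) that $\det(M_{A\cdot q})=\det(A)\det(M_q)$, so $[\Sigma]=c_1$. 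The point you are missing is that the localization sequence requires the image of \emph{all} of $A^*_T(\Sigma)$, not just the fundamental class, and since $\Sigma$ is singular along the rank-$\le 1$ locus its Chow group is not simply the free module on $[\Sigma]$. The paper handles this by passing to the projectivization and using the incidence correspondence $\cl{W}=\{(q,p):p\text{ singular on }Q\}\subset\PP(2,2)\times\PP^2$ as a Chow envelope for $\PP(2,2)_{\rm sing}$; writing $[\cl{W}]=\xi_3+\xi_2 t+\xi_1 t^2$ and pushing forward to $\PP(2,2)$ gives generators $\xi_1,\xi_2,\xi_3$ for the image, and after imposing $h=c_1$ (from the $\Gm$-torsor $\cl{S}\to\PP(\cl{S})$) these reduce to $c_1$ and $2c_3$. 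This resolution step is the actual content of the lemma, and your sketch does not reach it.

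Your alternative via the open cover $\cl{S}_{(0,2,0)}\cup\cl{S}_{(0,1,1)}\cup\cl{S}_{(0,0,2)}$ cannot work as stated either: Chow groups do not satisfy Mayer--Vietoris for open covers, and knowing that $A^*_T(\cl{S}_{\underline{i}})=A^*_T(\cl{S})/(\underline{i}\cdot\underline{\lambda})$ for three values of $\underline{i}$ does not let you reconstruct $A^*_T(\cl{S})$ by ``intersecting relations''. Indeed, the three classes $2\lambda_2,\ \lambda_2+\lambda_3,\ 2\lambda_3$ you write down (after setting $c_1=0$) generate an ideal in $\ZZ[\lambda_1,\lambda_2,\lambda_3]/(c_1)$ that does not contain $2c_3=2\lambda_1\lambda_2\lambda_3$, since none of them involves $\lambda_1$; so the heuristic combination you propose cannot produce the correct relation.
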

\begin{proof}
	Observe that $\cl{S}$ is an open subscheme of the representation $\bA(2,2)$ of $T$. This plus the localization exact sequence implies that $A_T^*(\cl{S})$ is generated by $\lambda_1$, $\lambda_2$ and $\lambda_3$. We need to find the relations among the generators.
	Let $\cl{W}$ be the closed subscheme inside $\PP(2,2)\times\PP^2$ whose points correspond to pairs $(q,p)$ with $p$ a singular point of $\{q=0\}$. We have
	$ [\cl{W}]={\rm pr}_1^*\xi_3+{\rm pr}_1^*\xi_2t+{\rm pr}_1^*\xi_1t^2 $, 
	where $t$ denotes the pullback to $\PP(2,2)\times\PP^2$ of the hyperplane section of $\PP^2$. Using the same arguments of \cite{FulVis}*{theorem $5.5$} we deduce that the image of
	$$ i_*:A_T^*(\PP(2,2)_{\rm sing})\longrightarrow A_T^*(\PP(2,2)) $$
	is exactly $(\xi_1,\xi_2,\xi_3)$. Arguing as in \cite{Vis}*{pg.638}, we have that the pullback map
	$$ A^*_T(\PP(\cl{S}))\twoheadrightarrow A^*_T(\cl{S}) $$
	is surjective with kernel $(c_1-h)$, where $h$ denotes the hyperplane section of $\PP(\cl{S})$. This implies that the ideal of relations of $A^*_T(\cl{S})$ is generated by the elements $\xi_1,\xi_2,\xi_3,c_1-h$ and $f(h)$, where $f$ denotes the monic polynomial of degree $6$ satisfied by $h$ in $A^*_T(\PP(2,2))$. 
	
	The subscheme $\cl{W}$ is a complete intersection of three $T$-invariant hyperplanes, namely those defined as the vanishing locus of $q_{X_i}(p)$ for $i=0,1,2$.
	We can use the formula given by \cite{EdiFul}[lemma 2.4] to compute the cycle class of each hypersurface, whose product will be equal to $[\cl{W}]$.
	From the result of the explicit computation we see that the actual generators of the ideal of relations are $c_1$ and $2c_3$.
\end{proof}
We know that $A^*_T(\PP(V_n))$ is generated as $A^*_T(\cl{S})$-algebra by the hyperplane section $h_n$, so that we have
$$ A^*_T(\PP(V_n))\simeq\mathbb{Z}[\lambda_1,\lambda_2,\lambda_3,h_n]/(c_1,2c_3,p_n(h_n)) $$
where $p_n(h_n)$ is a monic polynomial of degree $2n+1$. Recall now that in the previous subsection we defined the open subscheme $\cl{S}_{\underline{i}}$ of $\cl{S}$ as the subscheme whose points are smooth quadratic forms with coefficient $a_{\underline{i}}$ not zero. The complement of $\cl{S}_{\underline{i}}$ was denoted $Y_{\underline{i}}$. The $T$-equivariant Chow ring of $\PP(V_n)|_{\cl{S}_{\underline{i}}}$ may be easily computed.
\begin{lm}\label{lm:Chow_restricted_projBundle}
	We have $A^*_T(\PP(V_n)|_{\cl{S}_{\underline{i}}})\simeq A^*_T(\PP(V_n))/(\underline{i}\cdot\underline{\lambda})$.
\end{lm}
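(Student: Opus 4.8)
The plan is to run the equivariant localization exact sequence for the open-closed decomposition of $\PP(V_n)$ induced by $\cl{S}=\cl{S}_{\underline{i}}\sqcup Y_{\underline{i}}$, and then to identify the resulting ideal of relations. Writing $\pi\colon\PP(V_n)\to\cl{S}$ for the projection, $i\colon\PP(V_n)|_{Y_{\underline{i}}}\hookrightarrow\PP(V_n)$ for the closed immersion over the complement and $j$ for the open part, the sequence
$$A^*_T(\PP(V_n)|_{Y_{\underline{i}}})\xrightarrow{i_*}A^*_T(\PP(V_n))\xrightarrow{j^*}A^*_T(\PP(V_n)|_{\cl{S}_{\underline{i}}})\to 0$$
is exact, so the task reduces to showing that $\mathrm{im}(i_*)$ is exactly the ideal $(\underline{i}\cdot\underline{\lambda})$.

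First I would show that $\mathrm{im}(i_*)$ is the \emph{principal} ideal generated by $i_*(1)=[\PP(V_n)|_{Y_{\underline{i}}}]$. The key is that $i^*$ is surjective: by the projective bundle formula $A^*_T(\PP(V_n)|_{Y_{\underline{i}}})$ is generated over $A^*_T(Y_{\underline{i}})$ by the restriction of the hyperplane class $h_n$, while $Y_{\underline{i}}$ is an open subscheme of the $T$-representation $\{a_{\underline{i}}=0\}\subset\bA(2,2)$, so $A^*_T(Y_{\underline{i}})$ is a quotient of $\ZZ[\lambda_1,\lambda_2,\lambda_3]$ and is generated by the $\lambda_j$. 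Since $h_n$ and the $\lambda_j$ all arise by restriction from $\PP(V_n)$, the map $i^*$ is onto, and the projection formula $i_*(i^*\gamma)=\gamma\cdot i_*(1)$ then forces $\mathrm{im}(i_*)=(i_*(1))$.

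Second I would compute the generator. As $\PP(V_n)|_{Y_{\underline{i}}}=\pi^{-1}(Y_{\underline{i}})$, flat pullback gives $i_*(1)=\pi^*[Y_{\underline{i}}]$, so it suffices to compute $[Y_{\underline{i}}]\in A^*_T(\cl{S})$. The divisor $Y_{\underline{i}}$ is the restriction to $\cl{S}$ of the invariant hyperplane $\{a_{\underline{i}}=0\}$ in $\bA(2,2)$, whose equivariant class is the weight of the complementary coordinate line spanned by $\underline{X}^{\underline{i}}$. A direct computation from the action $A\cdot q=\det(A)q(A^{-1}\underline{X})$ shows that a diagonal matrix scales $\underline{X}^{\underline{i}}$ by $s_0^{1-i_0}s_1^{1-i_1}s_2^{1-i_2}$, so this weight is $(1-i_0)\lambda_1+(1-i_1)\lambda_2+(1-i_2)\lambda_3=c_1-\underline{i}\cdot\underline{\lambda}$. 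Restricting to $\cl{S}$ and invoking lemma \ref{lm:T-Chow of S}, where $c_1=0$, yields $[Y_{\underline{i}}]=-\underline{i}\cdot\underline{\lambda}$, hence $\mathrm{im}(i_*)=(\underline{i}\cdot\underline{\lambda})$ and the claimed presentation.

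The step I expect to be the main obstacle is the first one: verifying that $\mathrm{im}(i_*)$ is principal rather than some larger ideal. A priori $i_*$ produces classes $\gamma\cdot[Y_{\underline{i}}]$ for arbitrary $\gamma\in A^*_T(\PP(V_n)|_{Y_{\underline{i}}})$, and the surjectivity of $i^*$ is precisely what guarantees every such $\gamma$ is a restriction, collapsing the image to a single principal ideal; this in turn rests on the fact that $Y_{\underline{i}}$ is open in a linear representation, so its equivariant Chow ring introduces no generators beyond the $\lambda_j$. The weight computation for $[Y_{\underline{i}}]$ is then routine once the convention relating the $\lambda_j$ to the torus characters is fixed, the only mild care being the $\det(A)$ factor, which is what contributes the $c_1$ that subsequently vanishes on $\cl{S}$.
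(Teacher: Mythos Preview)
Your proposal is correct and follows essentially the same approach as the paper: both use the localization sequence for the decomposition over $\cl{S}_{\underline{i}}\sqcup Y_{\underline{i}}$, reduce $\mathrm{im}(i_*)$ to the ideal generated by $\pi^*[Y_{\underline{i}}]$, and then compute $[Y_{\underline{i}}]$ as the weight of the line spanned by $\underline{X}^{\underline{i}}$, using that $c_1=0$ in $A^*_T(\cl{S})$. The only cosmetic difference is that you package the reduction via surjectivity of $i^*$ and the projection formula, whereas the paper computes $i_*(p'^*\alpha\cdot t^d)=p^*i'_*\alpha\cdot h_n^d$ directly from the cartesian square; the content is the same.
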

\begin{proof}
	From the localization exact sequence
	$$A^*_T(\PP(V_n)|_{Y_{\underline{i}}})\xrightarrow{i_*} A^*_T(\PP(V_n)) \xrightarrow{j^*} A^*_T(\PP(V_n)|_{\cl{S}_{\underline{i}}}) \rightarrow 0$$
	we see that what we need to prove is that ${\rm im}(i_*)=(\underline{i}\cdot\underline{\lambda})$. Let $t$ be the hyperplane section of $\PP(V_n)|_{Y_{\underline{i}}}$, so that the $T$-equivariant Chow ring of $\PP(V_n)|_{Y_{\underline{i}}}$ is generated, as abelian group, by elements of the form $p^{'*}\alpha\cdot t^d$ for $d\leq 2n$, where $p'$ is the projection map to $Y_{\underline{i}}$. This implies that ${\rm im}(i_*)$ is generated by the elements $i_*(p^{'*}\alpha\cdot t^d)$. Observe that $i^*h_n=t$. From the cartesian square
	$$ \xymatrix{ 
		\PP(V_n)|_{Y_{\underline{i}}} \ar[r]^{i} \ar[d]^{p'} & \PP(V_n) \ar[d]^{p} \\
		Y_{\underline{i}} \ar[r]^{i'} & \cl{S} } $$
	we deduce the following chain of equivalences:
	\begin{align*}
	i_*(p^{'*}\alpha\cdot t^d)&=i_*(p^{'*}\alpha\cdot i^*h_n^d)\\
	&=i_*p^{'*}\alpha\cdot h_n^d=p^*i'_*\alpha\cdot h^d_n
	\end{align*}
	This means that ${\rm im}(i_*)=p^*({\rm im}(i'_*))$. Now observe that $Y_{\underline{i}}$ is an open subscheme of a representation of $T$, namely the vector subspace of forms of degree $2$ with the coefficient $a_{\underline{i}}$ equal to zero. This implies that $A^*_T(Y_{\underline{i}})$ is a quotient of $A^*_T$, from which we deduce that ${\rm im}(i'_*)$ is generated, as an ideal, by $[Y_{\underline{i}}]$, so that we only have to compute this class in $A^*_T(\cl{S})$. Because of the fact that $Y_{\underline{i}}$ is defined as the zero locus of the coordinate $a_{\underline{i}}$, the class of $Y_{\underline{i}}$ corresponds to $-c_1(\chi)$: here $\chi$ is the character such that if $\tau$ is an element of $T$ then $a_{\underline{i}}(\tau^{-1}\cdot x)=\chi(\tau)a_{\underline{i}}$. In this case, with the action that we defined before, we obtain $c_1(\chi)=c_1-\underline{i}\cdot\underline{\lambda}$. Using the relations that we already had in $A^*_T(\cl{S})$, we deduce that $[Y_{\underline{i}}]=-\underline{i}\cdot\underline{\lambda}$.
\end{proof}
Recall that we previously defined $T$-invariant closed subschemes $W_{n,r,l}\subset\PP(V_n)$ (see definition \ref{def:Wnrl}).
The cycle classes $[W_{n,r,l}]$ have degree $2r+2l$. We can pull back them via the open immersion 
$$j:\PP(V_n)|_{\cl{S}_{(0,0,2)}}\simeq \PP^{2n}\times \cl{S}_{(0,0,2)}\hookrightarrow \PP(V_n)$$
so that they can be actually computed. Indeed here we have homogeneous coordinates given by the coefficients $b_{\underline{k}}$, for $|k|=n$ and $(0,0,2)\nleq \underline{k}$ and we see that
$$ j^{-1}W_{n,r,l}=\{b_{\underline{k}}=0\text{ for }k_0<r\text{ or }k_1<l \} $$
from which we deduce that they are complete intersection. Using the formula of \cite{EdiFul}[lemma 2.4], we obtain
$$j^*[W_{n,r,l}]=\prod (h_n-\underline{k}\cdot\underline{\lambda})\text{ for }\underline{k}\text{ s.t. } |\underline{k}|=n\text{, }k_2<2\text{, }k_0<r\text{ or }k_1<l$$
Applying lemma \ref{lm:Chow_restricted_projBundle} we deduce:
\begin{lm}\label{lm:Z_classes}
	We have  $$[W_{n,r,l}]=\prod (h_n-\underline{k}\cdot\underline{\lambda})+2\lambda_3\xi \text{ for }\underline{k}\text{ s.t. } |\underline{k}|=n\text{, }k_2<2\text{, }k_0<r\text{ or }k_1<l$$
	where $\xi$ is an element of $A^*_T(\PP(V_n))$.
\end{lm}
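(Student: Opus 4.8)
The plan is to deduce the formula directly from the restriction computation carried out immediately above the statement, combined with the localization exact sequence for the open immersion
$$ j:\PP(V_n)|_{\cl{S}_{(0,0,2)}}\hookrightarrow\PP(V_n). $$
The essential observation is that the product $\prod(h_n-\underline{k}\cdot\underline{\lambda})$, taken over the triples $\underline{k}$ with $|\underline{k}|=n$, $k_2<2$, and $k_0<r$ or $k_1<l$, is a genuine element of $A^*_T(\PP(V_n))$ (a polynomial in $h_n$ and the $\lambda_i$), and that it restricts under $j^*$ to exactly the class $j^*[W_{n,r,l}]$ that has already been computed as a product of hyperplane classes.

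First I would invoke lemma \ref{lm:Chow_restricted_projBundle} with $\underline{i}=(0,0,2)$, which tells us that the restriction map
$$ j^*:A^*_T(\PP(V_n))\longrightarrow A^*_T(\PP(V_n)|_{\cl{S}_{(0,0,2)}}) $$
is surjective with kernel the ideal generated by $\underline{i}\cdot\underline{\lambda}=2\lambda_3$. Next I would use the complete-intersection computation preceding the statement, namely
$$ j^*[W_{n,r,l}]=\prod(h_n-\underline{k}\cdot\underline{\lambda}), $$
the product running over the triples specified in the lemma. Since $j^*$ is a ring homomorphism fixing the $\lambda_i$ and sending $h_n$ to its restriction, the element $\prod(h_n-\underline{k}\cdot\underline{\lambda})\in A^*_T(\PP(V_n))$ maps to the very same product, whence
$$ j^*\Bigl([W_{n,r,l}]-\prod(h_n-\underline{k}\cdot\underline{\lambda})\Bigr)=0. $$

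It then follows that the difference $[W_{n,r,l}]-\prod(h_n-\underline{k}\cdot\underline{\lambda})$ lies in $\ker(j^*)=(2\lambda_3)$, so it can be written as $2\lambda_3\xi$ for some $\xi\in A^*_T(\PP(V_n))$; rearranging yields precisely the claimed identity. There is no serious obstacle here: all the geometric input—the trivialization of $\PP(V_n)$ over $\cl{S}_{(0,0,2)}$ furnished by lemma \ref{lm:lin_alg_vspace_poly}, the recognition of $j^{-1}W_{n,r,l}$ as a complete intersection of hyperplanes, and the identification of their $T$-equivariant classes with $h_n-\underline{k}\cdot\underline{\lambda}$—has already been established, and the kernel of $j^*$ is supplied by lemma \ref{lm:Chow_restricted_projBundle}. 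The only point requiring mild care is to lift the product over the \emph{same} index set $K$ so that the two expressions agree after applying $j^*$ and the difference genuinely lands in the ideal $(2\lambda_3)$.
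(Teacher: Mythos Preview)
Your argument is correct and follows exactly the same route as the paper: compute $j^*[W_{n,r,l}]$ as a product of linear factors using the trivialization over $\cl{S}_{(0,0,2)}$, then invoke lemma \ref{lm:Chow_restricted_projBundle} with $\underline{i}=(0,0,2)$ to conclude that the kernel of $j^*$ is $(2\lambda_3)$, so the difference is $2\lambda_3\xi$. The paper merely writes ``Applying lemma \ref{lm:Chow_restricted_projBundle} we deduce'' after the restriction computation; your version spells out the same reasoning in more detail.
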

In particular, all these classes are monic in $h_n$. Another useful property is the following: the set-theoretic intersection of $W_{n,r,0}$ and $W_{n,0,l}$ is exactly $W_{n,r,l}$. This is also true at the level of Chow rings: indeed $W_{n,r,l}$ is the only component of $W_{n,r,0}\cap W_{n,0,l}$, all the varieties involved are smooth and it is easy to check that the intersection is transversal, so that:
\begin{lm}\label{lm:Z_classes_intersection}
	We have $[W_{n,r,0}]\cdot [W_{n,0,l}]=[W_{n,r,l}]$.
\end{lm}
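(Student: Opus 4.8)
The plan is to deduce the identity from the fact that $W_{n,r,0}$ and $W_{n,0,l}$ meet \emph{properly} and \emph{transversally}, with common support the irreducible subvariety $W_{n,r,l}$; the product formula for transversal intersections of smooth subvarieties then yields the result with multiplicity one. Since all the subschemes involved are smooth and $T$-invariant, I would run the argument on the finite-dimensional smooth approximations computing $A^*_T$, where the transversal intersection formula of ordinary intersection theory applies verbatim, the $T$-action playing no role in the local computation.

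First I would record the dimension count. By lemma \ref{lm:Z_classes} the class $[W_{n,r,l}]$ has codimension $2r+2l$ in $\PP(V_n)$ (the product there ranges over exactly $2(r+l)$ indices when $n\geq r+l$); in particular $W_{n,r,0}$ has codimension $2r$ and $W_{n,0,l}$ has codimension $2l$. Their set-theoretic intersection is $W_{n,r,l}$, which is irreducible, being the image under $j_{n,r,l}$ of the projective bundle $\PP(V_{n-r-l})$ over the irreducible base $\cl{S}$. Since the codimensions add up, $2r+2l=(2r)+(2l)$, the intersection is proper and $W_{n,r,l}$ is its only component, so that necessarily
\[ [W_{n,r,0}]\cdot[W_{n,0,l}] = m\,[W_{n,r,l}] \]
for a single intersection multiplicity $m\geq 1$. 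It then remains only to show $m=1$.

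The multiplicity $m$ is a local invariant that may be computed at the generic point of $W_{n,r,l}$. This point lies over the generic point of $\cl{S}$, hence inside the chart $\PP(V_n)|_{\cl{S}_{(0,0,2)}}\simeq\PP^{2n}\times\cl{S}_{(0,0,2)}$ with the homogeneous coordinates $b_{\underline{k}}$ ($|k|=n$, $k_2<2$) of subsection \ref{subsec:prop of P(Vn)}. There $W_{n,r,0}$, $W_{n,0,l}$ and $W_{n,r,l}$ are the coordinate linear subspaces cut out respectively by $\{b_{\underline{k}}=0 : k_0<r\}$, by $\{b_{\underline{k}}=0 : k_1<l\}$, and by $\{b_{\underline{k}}=0 : k_0<r\text{ or }k_1<l\}$. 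The two index sets are disjoint: a common index would satisfy $n=k_0+k_1+k_2\leq (r-1)+(l-1)+1=r+l-1$, contradicting $n\geq r+l$ (which is exactly the condition for $W_{n,r,l}$ to be defined). Thus the two subspaces are cut out by disjoint families of coordinate equations, meet transversally, and their scheme-theoretic intersection is the reduced subspace $W_{n,r,l}$; this forces $m=1$.

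The only delicate point is the transversality input, but once phrased as the disjointness of the two families of coordinate hyperplanes on the chart $\cl{S}_{(0,0,2)}$ it becomes immediate, so I do not anticipate a genuine obstacle; the argument is essentially a verification that the relevant monomial index sets do not overlap in the range $n\geq r+l$.
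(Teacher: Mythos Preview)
Your argument is correct and is exactly the one the paper has in mind: the paper's justification, given in the sentence immediately preceding the lemma, is simply that $W_{n,r,l}$ is the unique component of $W_{n,r,0}\cap W_{n,0,l}$, all the varieties are smooth, and the intersection is transversal. You supply the details the paper omits, in particular the verification of transversality via the disjointness of the two coordinate index sets on the chart $\cl{S}_{(0,0,2)}$, together with the observation that this disjointness is precisely the inequality $n\geq r+l$ needed for $W_{n,r,l}$ to be defined.
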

%Recall the following commutative square of proper morphisms
Recall that we have defined the morphism
%$$ \xymatrix{
%	\PP(V_1)^{\times_{\cl{S}} n}\times_{\cl{S}} \PP(V_1)^{\times_{\cl{S}} m} \ar[r] \ar[d]^{\rho_n \times \rho_m} & \PP(V_1)^{\times_{\cl{S}} 2n+m} \ar[d]^{\rho_{2n+m}} \\
%	\PP(V_n)\times_{\cl{S}}\PP(V_m) \ar[r]^{\psi'_{n,m}} & \PP(V_{2n+m}) }$$
$$ \pi'_{n,m}:\PP(V_n)\times_{\cl{S}}\PP(V_m)\longrightarrow\PP(V_{2n+m})$$
as $\pi'_{n,m}(q,f,g)=(q,f^2g)$.
%Let us call $P_\alpha$ the image of the section $$\cl{S}\to\PP(V_1),\quad q\mapsto (q,X_\alpha)$$ for $\alpha=0,1$. Then from the square above we obtain another commutative square, which is
If we restrict this morphism to $W_{n,1,n-1}\times_{\cl{S}}\PP(V_{m})$ we obtain a birational, proper and surjective morphism
%$$ \xymatrix{
%	P_0\times_{\cl{S}} P_1\times_{\cl{S}}\cdots\times_{\cl{S}} P_1 \times_{\cl{S}} \PP(V_1)^{\times_{\cl{S}} m} \ar[r] \ar[d]^{\rho_n \times \rho_m} & P_0\times_{\cl{S}} P_0 \times_{\cl{S}} P_1\times_{\cl{S}}\cdots\times_{\cl{S}} P_1 \times_{\cl{S}}\PP(V_1)^{\times_{\cl{S}} m} \ar[d]^{\rho_{2n+m}} \\
%	W_{n,1,n-1}\times_{\cl{S}}\PP(V_m) \ar[r]^{\psi'_{n,m}}& W_{2n+m,2,n-2} }$$
$$\pi_{n,m}':W_{n,1,n-1}\times_{\cl{S}}\PP(V_{m})\longrightarrow  W_{2n+m,2,2n-2}$$
and from this we deduce:
%Observe that both the left and the right vertical morphisms are surjective and finite of degree $(2m-1)!!$. This, plus the commutativity of the square, implies that $(2m-1)!!\psi'_{n,m*}([W_{n,1,n-1}]\times 1)=(2m-1)!! [W_{2n+m,2,n-2}]$, where $[W_{n,1,n-1}]\times 1$ denote the cycle class $[W_{n,1,n-1}\times_{\cl{S}}\PP(V_m)]$. We can clear out $(2m-1)!!$ from both sides of the equality, because $(2m-1)!!$ is odd and the integers of torsion in $A^*_T(\PP(V_n))$ are even. We have proved the
\begin{lm}\label{lm:pushforward_W_classes}
	We have $\pi'_{n,m*}([W_{n,1,n-1}\times_{\cl{S}}\PP(V_{m})])=[W_{2n+m,2,n-2}]$.
\end{lm}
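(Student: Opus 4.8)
The plan is to show that the restricted morphism $\pi'_{n,m}\colon W_{n,1,n-1}\times_{\cl{S}}\PP(V_m)\to W_{2n+m,2,2n-2}$ (as displayed just before the statement) is an \emph{isomorphism}, so that the asserted identity is nothing but the fact that pushing a fundamental class forward along an isomorphism onto its image returns the fundamental class of that image.

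First I would identify the source. By definition \ref{def:Wnrl}, $W_{n,1,n-1}$ is the image of $j_{n,1,n-1}\colon\PP(V_{n-1-(n-1)})=\PP(V_0)\hookrightarrow\PP(V_n)$. Since the fibre of $V_0$ over a smooth conic $Q$ is $H^0(Q,\OO_Q)=k$, the bundle $V_0$ is a line bundle on $\cl{S}$ and $\PP(V_0)\cong\cl{S}$; concretely $W_{n,1,n-1}$ is the section $q\mapsto(q,[X_0X_1^{n-1}])$ of $\PP(V_n)\to\cl{S}$ (the monomial $X_0X_1^{n-1}$ restricts to a nonzero section of $\OO_Q(n)$ because the irreducible conic $Q$ lies in neither $\{X_0=0\}$ nor $\{X_1=0\}$). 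In particular $W_{n,1,n-1}\times_{\cl{S}}\PP(V_m)\cong\PP(V_m)$.

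Next I would make the restricted map explicit on the stack-in-sets descriptions of the $\PP(V_n)$. A point of the source is a triple $(q,[X_0X_1^{n-1}],[g])$ with $g\in H^0(Q,\OO_Q(m))$, and $\pi'_{n,m}$ sends it to $(q,[(X_0X_1^{n-1})^2g])=(q,[X_0^2X_1^{2n-2}g])$. On the other hand $W_{2n+m,2,2n-2}$ is the image of $j_{2n+m,2,2n-2}\colon\PP(V_m)\hookrightarrow\PP(V_{2n+m})$, $(q,[g])\mapsto(q,[X_0^2X_1^{2n-2}g])$ (note that $2n+m-2-(2n-2)=m$). Under the two identifications $W_{n,1,n-1}\times_{\cl{S}}\PP(V_m)\cong\PP(V_m)\cong W_{2n+m,2,2n-2}$ the restricted $\pi'_{n,m}$ becomes the identity, so it is an isomorphism onto $W_{2n+m,2,2n-2}$. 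All these identifications are $T$-equivariant, since $T$ scales the monomials $X_0X_1^{n-1}$ and $X_0^2X_1^{2n-2}$ by characters and acts compatibly on the $g$-factor.

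Finally I would conclude by functoriality of proper equivariant pushforward. The closed immersions of the two subvarieties, the bottom arrow $\pi'_{n,m}$, and the top isomorphism just described fit into a commutative square of $T$-equivariant proper maps; since pushing a fundamental class forward along a degree-one (here, isomorphic) map onto its image returns the fundamental class of the image, we get $\pi'_{n,m*}[W_{n,1,n-1}\times_{\cl{S}}\PP(V_m)]=[W_{2n+m,2,2n-2}]$. I do not anticipate a genuine obstacle: the only point to watch is that $f^2$ collapses to the single fixed monomial $X_0^2X_1^{2n-2}$, which is exactly what forces the restricted map to be an isomorphism rather than merely generically finite. This in turn relies on the numerology $1+(n-1)=n$, i.e.\ $n-r-l=0$, which makes $W_{n,1,n-1}$ a section of $\PP(V_n)$.
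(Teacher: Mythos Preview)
Your proposal is correct and is exactly the argument the paper leaves implicit: the paper gives no proof beyond ``and from this we easily deduce'' after displaying the restricted morphism, and you have spelled out why that display already settles the matter---namely, $W_{n,1,n-1}\cong\cl{S}$ because $V_0$ is a line bundle, so the restricted $\pi'_{n,m}$ is literally the closed immersion $j_{2n+m,2,2n-2}$ and hence an isomorphism onto its image. Your reading of the target as $W_{2n+m,2,2n-2}$ rather than the printed $W_{2n+m,2,n-2}$ is the correct one, as confirmed both by the displayed morphism just before the lemma and by the later application (where with $n\mapsto s$, $m\mapsto n-2s$ one needs $[W_{n,2,2s-2}]$).
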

\section{The Chow ring of $\cl{H}_g$: generators and first relations}\label{sec:chow 1}
\noindent
The goal of this section is to do the first steps in the computation of the Chow ring of $\cl{H}_g$, finding the generators and some relations. The intermediate result that we find is the content of corollary \ref{cor:chow_ring_Hg_intermediate}.

Let $V_{g+1}$ be the vector bundle over $\cl{S}$ introduced in definition \ref{def:Vn}, where $\cl{S}$ is the scheme of quadratic ternary forms of rank $3$. Recall (see remark \ref{rmk:point of Vn}) that for any field $K$ the points of $V_{g+1}$ can be thought as pairs $(q,f)$ where $q$ is a quadratic ternary form of rank $3$ over $K$ and $f$ is an equivalence class of ternary forms of degree $g+1$ over $K$, where two forms are equivalent if their difference is divisible by $q$.

Theorem \ref{thm:presentation_Hg} tells us that the Chow ring $A^*(\cl{H}_g)$ is isomorphic to the equivariant Chow ring $A^*_{\GLt\times\Gm}(U')$, where $U'$ is the open subscheme of $V_{g+1}$ consisting of all the pairs $(q,f)$ such that the intersection $Q\cap F$ of the associated plane curves is smooth, i.e. the intersection consists of $2g+2$ distinct points.

Let $D'$ be the complement of $U'$ in $V_{g+1}$, which is a closed subscheme of codimension $1$. In this section, we will always assume $n=g+1$. The localization exact sequence in this case is
$$ A^*_{\GLt\times\Gm}(D')\xrightarrow{i_*} A^*_{\GLt\times\Gm}(V_n) \xrightarrow{j^*} A^*_{\GLt\times\Gm}(U') \rightarrow 0 $$
Observe that $V_n$ is a $\GLt\times\Gm$-equivariant vector bundle over $\cl{S}$, from which we deduce
\begin{equation}\label{eq:chow S}
A^*_{\GLt\times\Gm}(V_n) \simeq A^*_{\GLt\times\Gm}(\cl{S}) \simeq \mathbb{Z}[\tau,c_2,c_3]/(2c_3) 
\end{equation}
where $\tau$ is the first Chern class of the standard $\Gm$-representation and $c_2$, $c_3$ are respectively the second and the third Chern class of the standard $\GLt$-representation. The last isomorphism can be deduced from lemma \ref{lm:T-Chow of S} using the fact that the $\GLt\times\Gm$-equivariant Chow ring of $\cl{S}$ is the $S_3$-invariant subring of $A^*_{T\times\Gm}(\cl{S})$, where $S_3$ acts by permuting $\lambda_1$, $\lambda_2$ and $\lambda_3$, the cycles pulled back from $A^*_T({\rm Spec}(k))$ (see \cite[lemma $2.1$]{FulVis}).

We have found a set of generators for $A^*(\cl{H}_g)$. Now we have to find the relations among the generators, which is the same as computing the generators of the ideal ${\rm im}(i_*)$ inside the equivariant Chow ring of $V_n$. 

Consider the projective bundle $\PP(V_n)$ and its open subscheme $U$, whose preimage in $V_n\setminus\sigma_0$ is exactly $U'$ (recall that $\sigma_0:\cl{S}\to V_n$ is the zero section). Observe that $V_n\setminus\sigma_0$ is equivariantly isomorphic to the $\Gm$-torsor over $\PP(V_n)$ associated to the line bundle $\OO(-1)\otimes \cl{V}^{-\otimes 2}$, where $\cl{V}$ is the standard representation of $\Gm$ pulled back to $\PP(V_n)$. Clearly, we can say the same thing for $U'$ over $U$. This implies, arguing as in \cite{Vis}*{pg.638}, that we have a surjective morphism
$$A_{\GLt\times\Gm}^*(U)\longrightarrow A_{\GLt\times\Gm}^*(U') $$
whose kernel is given by $c_1(\OO_U(-1)\otimes(\cl{V})^{-\otimes 2})=-h_n-2\tau$. 

This means that, if $p(h_n)$ is a relation in $A^*_{\GLt\times\Gm}(U)$, then $p(-2\tau)$ is a relation in $A^*_{\GLt\times\Gm}(U')$, and all the relations in this last ring are obtained from relations in the Chow ring of $U$ in this way.

After passing to $\PP(V_n)$ the action of $\Gm$ becomes trivial, so that we can restrict ourselves to consider only the $\GLt$-action. Is then enough, in order to determine the equivariant Chow ring of $U'$, to compute the $\GLt$-equivariant Chow ring of $U$. 

Again, we have the localization exact sequence
$$ A^*_{\GLt}(D)\xrightarrow{i_*}A^*_{\GLt}(\PP(V_n))\xrightarrow{j^*}A^*_{\GLt}(U)\rightarrow 0 $$
The ring in the middle is isomorphic to 
\begin{equation}\label{eq:chow P(Vn)}
A^*_{\GLt}(\cl{S})[h_n]/(p_n(h_n))\simeq\mathbb{Z}[c_2,c_3,h_n]/(2c_3,p_n(h_n))
\end{equation}
where $p_n(h_n)$ is monic of degree $2n+1$. Thus to find the relations we have to compute the generators of the ideal ${\rm im}(i_*)$. 

Observe that the closed subscheme $D$ admits a stratification $$ D_n \subset D_{n-1} \subset ... \subset D_1=D $$ where
$D_m$ is the locus of pairs $(q,f)$ such that $Q\cap F=2E+E'$, with $\deg (E)=m$. All these sets are clearly $\GLt$-invariant. Observe moreover that $D_{2s}$ coincides with the image of the equivariant, proper morphism
$$\pi'_{2s}:\PP(V_s)\times_{\cl{S}}\PP(V_{n-2s})\longrightarrow \PP(V_n),\quad (q,f,g)\longmapsto (q,f^2g)$$
where $\pi'_{2s}:=\pi'_{s,n-2s}$, the morphism defined in subsection \ref{subsec:prop of P(Vn)}. This induces a scheme structure on $D_{2s}$. 

Consider also the $\GLt$-invariant closed subscheme
$$ \cl{Y}_1=\{(q,l)\text{ such that }L\text{ is tangent to }Q\} \subset \PP(V_1)$$
and let us define the closed subschemes $\cl{Y}_{2s+1}$ as the image of the morphisms
$$\phi:\cl{Y}_1\times_{\cl{S}}\PP(V_s)\longrightarrow \PP(V_{2s+1}),\quad (q,l,f)\longmapsto (q,lf^2)$$
We can think of $\cl{Y}_{2s+1}$ as the locus of quadrics plus a divisor of the form $2E$, with $\deg(E)=2s+1$. Restricting the morphism $\psi'_{2s+1,n-2s-1}$ defined in subsection \ref{subsec:comp res} to this closed subscheme we obtain a proper morphism 
$$\pi'_{2s+1}:\cl{Y}_{2s+1}\times_{\cl{S}}\PP(V_{n-2s-1})\longrightarrow\PP(V_n)$$
whose image is $D_{2s+1}$. This induces the scheme structure on $D_{2s+1}$. 

The stratification defined above resembles the stratification 
$$\Delta_n\subset...\subset\Delta_1=\Delta\subset\PP(1,2n)$$
that has been introduced in \cite{FulViv}. Indeed, we have that $D_s$ is the $\GLt$-counterpart of $\Delta_s$. Furthermore, the $\GLt$-counterparts of the morphisms
$$\pi_{2s}:\PP(1,2s)\times\PP(1,2n-4s)\longrightarrow\PP(1,2n),\quad (f,g)\longmapsto f^2g$$
are exactly the morphisms $\pi'_{2s}:\PP(V_{s})\times_{\cl{S}}\PP(V_{n-2s})\to\PP(V_n)$. Applying again theorem \ref{cor:chow diagrams} we obtain the commutative diagram
$$ \xymatrix{ A^*_{\PGLt}(\PP(1,2s)\times\PP(1,2n-4s)) \ar[d] \ar[r]^{\simeq} & A^*_{\GLt}(\PP(V_{s})\times_{\cl{S}}\PP(V_{n-2s})) \ar[d] \\
	A^*_{\PGLt}(\PP(1,2n))  \ar[r]^{\simeq} & A^*_{\GLt}(\PP(V_n)) }$$
We also have that $\cl{Y}_1$ is the $\GLt$-counterpart of $\PP^1$, as we can think of $\cl{Y}_1$ as the tautological conic over $\cl{S}$, and in general the closed subschemes $\cl{Y}_{2s+1}\subset \PP(V_{2s+1})$ are the $\GLt$-counterpart of $\PP(1,2s+1)$ sitting inside $\PP(1,4s+2)$ via the square map, so that we still have
$$ \xymatrix{
	A^*_{\PGLt}(\PP(1,2s+1)\times\PP(1,2n-4s-2)) \ar[r]^{\simeq} \ar[d] & A^*_{\GLt}((\cl{Y}_{2s+1})\times_{\cl{S}}\PP(V_{n-2s-1})) \ar[d] \\
	A^*_{\PGLt}(\PP(1,4s+2)\times\PP(1,2n-4s-2))  \ar[r]^{\simeq} \ar[d]&   A^*_{\GLt}(\PP(V_{2s+1})\times_{\cl{S}}\PP(V_{n-2s-1})) \ar[d] \\
	A^*_{\PGLt}(\PP(1,2n)) \ar[r]^{\simeq} & A^*_{\GLt}(\PP(V_n)) } $$
Combining \cite{FulViv}*{lemma 3.1} with the diagrams above we obtain:
\begin{lm}\label{lm:chow_envelope}
	The ideal ${\rm im}(i_*)$ is the sum of the ideals ${\rm im}(\pi'_{s*})$, and these ideals are equal to ${\rm im}(\pi_{s*})$ via the isomorphisms of the diagrams above.
\end{lm}
In particular, lemma \ref{lm:chow_envelope} and \cite{FulViv}*{prop. 5.2} imply:
\begin{prop}\label{pr:im_pi_1}
	We have ${\rm im}(\pi'_{1*})=(2h_n^2-2n(n-1)c_2,(4n-2)h_n)$.
\end{prop}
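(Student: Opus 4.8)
The plan is to exploit the projection formula to present $\mathrm{im}(\pi'_{1*})$ as an ideal and to reduce its computation to two explicit pushforwards. Because $\pi'_{1*}((\pi'_1)^{*}\beta\cdot\alpha)=\beta\cdot\pi'_{1*}(\alpha)$, the image is an ideal of $A^*_{\GLt}(\PP(V_n))$, generated by the classes $\pi'_{1*}(\alpha)$ as $\alpha$ ranges over a set of generators of $A^*_{\GLt}(\cl{Y}_1\times_{\cl{S}}\PP(V_{n-1}))$ as a module over $A^*_{\GLt}(\PP(V_n))$ via $(\pi'_1)^{*}$. Writing $\zeta:=h_1|_{\cl{Y}_1}$ for the restriction of the hyperplane class of $\PP(V_1)$, and recalling that $\pi'_1$ is the restriction to $\cl{Y}_1\times_{\cl{S}}\PP(V_{n-1})$ of the multiplication morphism $\psi'_{1,n-1}$, I would first record the pullback formula $(\pi'_1)^{*}h_n=\zeta+h_{n-1}$, which comes from the fact that on a multiplication map $\OO(1)$ pulls back to the external tensor product of the two tautological sheaves. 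Since $h_{n-1}=h_n-\zeta$, every class on the source is a polynomial in $\zeta$ with coefficients pulled back from $A^*_{\GLt}(\PP(V_n))$; if $A^*_{\GLt}(\cl{Y}_1)$ is generated over $A^*_{\GLt}(\cl{S})$ by $\zeta$, with a single quadratic relation reflecting that $\cl{Y}_1\to\cl{S}$ is a conic bundle, then $\{1,\zeta\}$ generate the source as an $A^*_{\GLt}(\PP(V_n))$-module and $\mathrm{im}(\pi'_{1*})=(\pi'_{1*}(1),\pi'_{1*}(\zeta))$.

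To compute the two classes I would factor $\pi'_1=\psi'_{1,n-1}\circ(\iota\times\mathrm{id})$, with $\iota:\cl{Y}_1\hookrightarrow\PP(V_1)$ the closed immersion. As $\cl{Y}_1$ is the dual conic inside the $\PP^2$-bundle $\PP(V_1)$ and $A^1_{\GLt}(\cl{S})=0$, its class is forced to be $[\cl{Y}_1]=2h_1$; the projection formula then gives $(\iota\times\mathrm{id})_*(1)=2\,\mathrm{pr}_1^{*}h_1$ and $(\iota\times\mathrm{id})_*(\zeta)=2\,\mathrm{pr}_1^{*}h_1^2$. Both generators therefore acquire the factor $2$ that accounts for the evenness of the coefficients in the statement, and the problem is reduced to evaluating $\psi'_{1,n-1*}(\mathrm{pr}_1^{*}h_1^{j})$ for $j=1,2$. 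Using $(\psi'_{1,n-1})^{*}h_n=h_1+h_{n-1}$ and integrating over the fibres, the non-equivariant parts are fixed at once: $\pi'_{1*}(1)=[D_1]$ is an even multiple of $h_n$, the degree-one generator of the statement, while $\pi'_{1*}(\zeta)$ has leading term $2h_n^2$, so that $\pi'_{1*}(\zeta)=2h_n^2+(\ast)\,c_2$ for a single integer $\ast$ still to be determined.

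The remaining task is to pin down the equivariant correction $\ast=-2n(n-1)$, which is invisible on a single fibre. I would extract it by restricting to the maximal torus $T\subset\GLt$, where $A^*_T(\cl{S})$ is explicit (Lemma \ref{lm:T-Chow of S}) and the bundles $V_m$ trivialize over the opens $\cl{S}_{\underline{i}}$ (Lemma \ref{lm:open_that_triv}). Over such an open the multiplication map and the classes $\mathrm{pr}_1^{*}h_1^{j}$ become completely explicit in the variables $\lambda_1,\lambda_2,\lambda_3$, and the pushforward can be evaluated through the same complete-intersection descriptions used for the $W$-classes in Lemmas \ref{lm:Z_classes} and \ref{lm:pushforward_W_classes}. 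Re-assembling the resulting symmetric expression and descending from $A^*_T$ to $A^*_{\GLt}$ identifies the coefficient of $c_2$ and yields $\pi'_{1*}(\zeta)=2h_n^2-2n(n-1)c_2$, completing the proof.

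The genuine difficulty sits entirely in the structural claim of the first paragraph: one must prove that $A^*_{\GLt}(\cl{Y}_1)$—which under the counterpart formalism is exactly $A^*_{\PGLt}(\PP^1)$, the ring whose subtlety is flagged in the introduction—is generated over $A^*_{\GLt}(\cl{S})$ by the single class $\zeta$, i.e. that the restriction $A^*_{\GLt}(\PP(V_1))\to A^*_{\GLt}(\cl{Y}_1)$ is surjective. This is precisely the $2$-divisibility point of Lemma \ref{lm:2-divisibility}: fibrewise $\zeta$ restricts to twice the point class, so surjectivity amounts to showing that no global class realizes the ``point''—equivalently that the universal conic carries no equivariant section—and it is this failure of $2$-divisibility that legitimizes taking $\{1,\zeta\}$ (rather than some halved class) as module generators, and hence forces the factor $2$ in both generators. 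Mishandling exactly this issue is the source of the error discussed around Lemma \ref{lm:2-divisibility}; by comparison the torus-localization bookkeeping of the third paragraph, though lengthy, is routine.
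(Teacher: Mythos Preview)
Your route is genuinely different from the paper's, and it can be made to work, but the proposal contains a structural error and the comparison with the paper is instructive.

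\textbf{The gap.} Your key reduction rests on the claim that $A^*_{\GLt}(\cl{Y}_1)$ is generated over $A^*_{\GLt}(\cl{S})$ by $\zeta$ \emph{with a quadratic relation}, so that $\{1,\zeta\}$ are module generators. This is false. Under $\cl{Y}_1\simeq\cl{Q}$ the class $\zeta$ identifies with the restriction of the hyperplane class $t$ of $\PP^2$, and since $\cl{Q}$ is open in a vector bundle over $\PP^2$ one finds that $A^*_{\GLt}(\cl{Y}_1)$ is a quotient of $A^*_{\GLt}[t]/(t^3+c_2t\pm c_3)$; because $A^1_{\GLt}(\cl{S})=0$ while $c_3\neq 0$, no quadratic relation holds and $\{1,\zeta,\zeta^2\}$ is the correct generating set. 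Fortunately this does not ruin the outcome: from $(\iota\times\mathrm{id})_*(\zeta^2)=2h_1^3$ and $h_1^3\in c_2h_1+\{0,c_3\}$ (with $2c_3=0$) one gets $\pi'_{1*}(\zeta^2)=-c_2\,\pi'_{1*}(1)$, so the ideal is still $(\pi'_{1*}(1),\pi'_{1*}(\zeta))$. But the argument must be repaired, and your appeal to Lemma~\ref{lm:2-divisibility} to justify the two-generator claim is misplaced: that lemma concerns $2$-divisibility of $\pi'_{2s*}(h_s^i\cdot h_{n-2s}^j)$ and has nothing to do with the module structure of $A^*_{\GLt}(\cl{Y}_1)$.

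\textbf{How the paper proceeds instead.} The paper never attempts to describe $A^*_{\GLt}(\cl{Y}_1)$ over $A^*_{\GLt}(\cl{S})$. It replaces $\cl{Y}_1\times_{\cl{S}}\PP(V_{n-1})$ by the incidence scheme $\cl{Z}\subset\PP(V_n)\times\PP^2$ of triples $(q,f,p)$ with $p$ a non-transverse point of $Q\cap F$ (Lemma~\ref{lm:Y_iso_Z}), and then uses the \emph{other} projection, to $\PP^2$. Over $\PP^2$ the universal conic $\cl{Q}$ is open in a sub-vector-bundle of $\PP^2\times\bA(2,2)$, and $\cl{Z}$ is a projective subbundle over $\cl{Q}$ (Lemma~\ref{lm:Z_as_proj_bundle}); this yields at once that $A^*_{\GLt}(\cl{Z})$ is generated by $i^*h_n$ over $\mathrm{pr}_2^*A^*_{\GLt}(\PP^2)$ (Lemma~\ref{lm:generators_image_Z}), so $\mathrm{im}(\mathrm{pr}_{1*})=(\beta_1,\beta_2,\beta_3)$ where $[\cl{Z}]=\beta_1t^2+\beta_2t+\beta_3$. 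The class $[\cl{Z}]$ is then computed by writing $[\cl{Z}']=[\cl{Z}'_2]-[\cl{W}_2]$ as a difference of $T$-equivariant complete-intersection classes in $\PP(2,n)\times\PP(\cl{S})\times\PP^2$ and specializing $s=c_1=0$, $2c_3=0$; this gives $\beta_1=(4n-2)h_n$, $\beta_2=2h_n^2-2n(n-1)c_2$, $\beta_3=0$ directly, with no separate determination of the $c_2$-coefficient needed.

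\textbf{Comparison.} Your factorization through $\PP(V_1)\times_{\cl{S}}\PP(V_{n-1})$ is natural and would also work once the module-generator count is corrected; the remaining pushforwards $\psi'_{1,n-1*}(h_1^j)$ can indeed be computed by torus restriction. The paper's trick of projecting to $\PP^2$ is what makes the ``hard'' part (surjectivity onto $A^*_{\GLt}(\cl{Y}_1)$) trivial and simultaneously turns the whole computation into reading off coefficients of a single explicit class. Your approach trades that single class computation for two separate pushforward computations plus a structural lemma about $\cl{Y}_1$.
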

\begin{rmk}
	The content of proposition \ref{pr:im_pi_1} can be proved without relying on \cite{FulViv}*{prop. 5.2}. In order to make the present work shorter, we decided not to write here the alternative proof, which can be found in the author's Ph.D. thesis \cite{Dil}.
\end{rmk}
\begin{cor}\label{cor:chow_ring_Hg_intermediate}
	The Chow ring of $\cl{H}_g$ is a quotient of the ring $$\mathbb{Z}[\tau,c_2,c_3]/(4(2g+1)\tau,8\tau^2-2g(g+1)c_2,2c_3)$$
\end{cor}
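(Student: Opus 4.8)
The plan is to assemble the relations already produced in this section and transport them from the $\GLt$-equivariant Chow ring of the projective bundle down to $A^*_{\GLt\times\Gm}(U')=A^*(\cl{H}_g)$. First I would fix the ambient ring: by \eqref{eq:chow S} the localization surjection $A^*_{\GLt\times\Gm}(V_n)\twoheadrightarrow A^*_{\GLt\times\Gm}(U')$ exhibits $A^*(\cl{H}_g)$ as a quotient of $\mathbb{Z}[\tau,c_2,c_3]/(2c_3)$, where $\tau$ is the $\Gm$-weight and $c_2,c_3$ are the Chern classes of the standard $\GLt$-representation. In particular the relation $2c_3=0$ already holds, so to prove the corollary it remains only to show that $4(2g+1)\tau$ and $8\tau^2-2g(g+1)c_2$ also vanish in $A^*(\cl{H}_g)$.

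The device that transports relations is the surjection $A^*_{\GLt\times\Gm}(U)\twoheadrightarrow A^*_{\GLt\times\Gm}(U')$ whose kernel is generated by $-h_n-2\tau=c_1(\OO_U(-1)\otimes\cl{V}^{-\otimes 2})$, coming from the $\Gm$-torsor $U'\to U$. Consequently every relation of $A^*_{\GLt}(U)$, written as a polynomial in $h_n$ with coefficients in $\mathbb{Z}[c_2,c_3]/(2c_3)$, descends to a relation of $A^*(\cl{H}_g)$ under the substitution $h_n\mapsto -2\tau$. Thus the key step is to exhibit enough relations in $A^*_{\GLt}(U)=A^*_{\GLt}(\PP(V_n))/{\rm im}(i_*)$. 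For this I would invoke Lemma~\ref{lm:chow_envelope}, which gives ${\rm im}(i_*)=\sum_s{\rm im}(\pi'_{s*})$, so that in particular ${\rm im}(\pi'_{1*})\subseteq{\rm im}(i_*)$ and every class in ${\rm im}(\pi'_{1*})$ is zero in $A^*_{\GLt}(U)$.

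By Proposition~\ref{pr:im_pi_1}, the ideal ${\rm im}(\pi'_{1*})$ contains the degree-one class $(4n-2)h_n$ and the degree-two class $2h_n^2-2n(n-1)c_2$ (these are the classes $\beta_1,\beta_2$ computed in the proof of that proposition), both of which therefore vanish in $A^*_{\GLt}(U)$. Setting $n=g+1$ and applying $h_n\mapsto -2\tau$, the first becomes $(4n-2)(-2\tau)=-4(2n-1)\tau=-4(2g+1)\tau$ and the second becomes $2(-2\tau)^2-2n(n-1)c_2=8\tau^2-2g(g+1)c_2$. Together with $2c_3=0$, this shows that the structure morphism $\mathbb{Z}[\tau,c_2,c_3]\to A^*(\cl{H}_g)$ annihilates the ideal $(4(2g+1)\tau,\,8\tau^2-2g(g+1)c_2,\,2c_3)$, which is exactly the assertion that $A^*(\cl{H}_g)$ is a quotient of the displayed ring.

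Since the statement only claims a quotient, I expect no genuine difficulty: no injectivity or exhaustiveness is needed, and I would simply discard the higher strata ${\rm im}(\pi'_{s*})$ for $s\geq 2$ together with the projective-bundle relation $p_n(-2\tau)$, all of which can only enlarge the ideal of relations. The sole point requiring care is the bookkeeping of the $\Gm$-torsor substitution $h_n=-2\tau$, including the factor $2$ contributed by $\cl{V}^{-\otimes 2}$ and the overall sign, since it is precisely this factor that converts $(4n-2)h_n=2(2g+1)h_n$ into $4(2g+1)\tau$. The truly hard part, namely showing that these three relations are also \emph{exhaustive} so that the inclusion is an equality, is not needed here and is deferred to the later, more computational sections.
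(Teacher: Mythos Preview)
Your argument is correct and follows the same mechanism as the paper: transport the generators of ${\rm im}(\pi'_{1*})$ from Proposition~\ref{pr:im_pi_1} through the substitution $h_n\mapsto -2\tau$ coming from the $\Gm$-torsor $U'\to U$, and combine with $2c_3=0$ from \eqref{eq:chow S}. Your bookkeeping is right (note the proposition's displayed generator $4(n-2)h_n$ is a typo for $(4n-2)h_n$, as the computation of $\beta_1$ in its proof shows; you correctly used the latter).

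There is one small difference in emphasis worth noting. The paper's one-line proof singles out the verification that the projective-bundle relation $p_n(-2\tau)$ already lies in the ideal $(4(2g+1)\tau,\,8\tau^2-2g(g+1)c_2,\,2c_3)$, deferring to \cite{FulViv}*{proposition 6.4}. For the bare ``quotient'' statement this is, as you observe, unnecessary: $p_n(-2\tau)$ can only enlarge the ideal of relations, so discarding it is harmless. What the paper is implicitly recording is the slightly sharper fact that the ring obtained using \emph{only} $2c_3$, $p_n(-2\tau)$, and ${\rm im}(\pi'_{1*})$ is already equal to the displayed ring, not merely a quotient of it. This is what makes the subsequent sections cleanly phrased as ``are there relations beyond ${\rm im}(\pi'_{1*})$?'' rather than ``beyond ${\rm im}(\pi'_{1*})$ and $p_n$?''. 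Your more direct route proves the corollary as stated; the paper's route buys a tidier setup for Section~\ref{sec:chow 2}.
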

\begin{proof}
	The only thing we need to prove is that $p_n(-2\tau)$ is contained in the ideal above. This works exactly as in \cite{FulViv}*{proposition 6.4}.
\end{proof}
The next section will be devoted to check if there are other relations in the Chow ring of $\cl{H}_g$, or if they all come from the pullback to $A^*_{\GLt\times\Gm}(V_n)$ of ${\rm im}(\pi'_{1*})$.
\section{Other generators of ${\rm im}(i_*)$}\label{sec:chow 2}
\noindent
In this section, we first complete the computation of ${\rm im}(i_*)$ with $\ZZ[\frac{1}{2}]$-coefficients, which means that we do the computations in the equivariant Chow ring tensored over $\ZZ$ with $\ZZ[\frac{1}{2}]$. The main result is the following proposition:
%\begin{prop}\label{pr:Chow ring H_g with Z-half coefficients}
%	We have ${\rm im}(i_*)\otimes_{\ZZ}\ZZ[\frac{1}{2}]=(2h_n^2-2n(n-1)c_2,4(n-2)h_n)$.
%\end{prop}
\begin{prop}\label{pr:generators_im i_Zhalf coefficients}
	We have ${\rm im}(\pi'_{r*})\otimes_{\mathbb{Z}}\mathbb{Z}[\frac{1}{2}]\subset {\rm im}(\pi'_{1*})	\otimes_{\mathbb{Z}}\mathbb{Z}[\frac{1}{2}]$.
\end{prop}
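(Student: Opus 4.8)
The plan is to use Lemma \ref{lm:chow_envelope} to reduce the proposition to showing, for each $r\ge 2$, that ${\rm im}(\pi'_{r*})\otimes\mathbb{Z}[\tfrac12]\subseteq {\rm im}(\pi'_{1*})\otimes\mathbb{Z}[\tfrac12]$, and then to boil each such inclusion down to a single class. After inverting $2$ the relation $2c_3=0$ forces $c_3=0$, so I work in $A^*_{\GLt}(\PP(V_n))\otimes\mathbb{Z}[\tfrac12]\simeq\mathbb{Z}[\tfrac12][c_2,h_n]/(p_n(h_n))$, where by Proposition \ref{pr:im_pi_1} the target becomes the explicit ideal ${\rm im}(\pi'_{1*})\otimes\mathbb{Z}[\tfrac12]=(h_n^2-n(n-1)c_2,\,(n-2)h_n)$.

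First I would cut down the generators. The source of $\pi'_r$ is a product of projective bundles over $\cl{S}$, so by the projection formula ${\rm im}(\pi'_{r*})$ is generated as an ideal by the classes $\pi'_{r*}(H^{i}h_{n-r}^{j})$, where $H$ is the relative hyperplane class of the first factor and $h_{n-r}$ that of $\PP(V_{n-r})$. Because $\pi'_r$ squares the first factor one has $\pi'^{*}_r h_n=2H+h_{n-r}$, so $\pi'^{*}_r h_n-h_{n-r}$ is a multiple of $H$; substituting $h_{n-r}=\pi'^{*}_r h_n-2H$, expanding, and applying the projection formula expresses each $\pi'_{r*}(H^{i}h_{n-r}^{j})$ as an $A^*_{\GLt}(\PP(V_n))$-combination of the classes $\pi'_{r*}(H^{k})$ for $k\ge 1$ together with the single term $h_n^{\,i+j}[D_r]$ arising from $k=0,\,i=0$, where $[D_r]=\pi'_{r*}(1)$ (note $\pi'_r$ is birational onto $D_r$). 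Thus modulo $(\pi'_{r*}(H^{k}))_{k\ge1}$ everything is governed by $[D_r]$.

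Next I would dispose of the classes $\pi'_{r*}(H^{k})$, $k\ge 1$, by a geometric factorization through $\pi'_1$. Marking one of the generically $r$ doubled intersection points lifts the source of $\pi'_r$ to a space carrying a tautological tangency point, hence mapping to $\cl{Z}$; composing with ${\rm pr}_1$ reproduces $\pi'_r$ up to the marking cover. The decisive point is that the corresponding addition map of relative symmetric products of the conic, $\cl{Q}\times_{\cl{S}}{\rm Sym}^{d-1}\to{\rm Sym}^{d}$, sends each positive power of the first-factor hyperplane class to the associated marked class with multiplicity one (the fiberwise identity $\mathrm{add}_*(a\,b^{k-1})=h^{k}$ on $\PP^1\times\PP^{d-1}\to\PP^{d}$, transported $\GLt$-equivariantly over $\cl{S}$). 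Hence $\pi'_{r*}(H^{k})\in{\rm im}({\rm pr}_{1*})={\rm im}(\pi'_{1*})$ already integrally for every $k\ge 1$, and the whole proposition collapses to the single assertion that $[D_r]\in{\rm im}(\pi'_{1*})\otimes\mathbb{Z}[\tfrac12]$.

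The hard part will be exactly this last assertion. The naive marking argument only yields $r\,[D_r]\in{\rm im}(\pi'_{1*})$, since the marking cover has degree $r$, which is odd for odd $r$ and therefore not invertible in $\mathbb{Z}[\tfrac12]$, so one cannot simply divide. To obtain $[D_r]$ itself I would compute it explicitly and verify directly that it lies in $(2h_n^2-2n(n-1)c_2,\,4(n-2)h_n)$ after inverting $2$. Two routes are available: transport the computation to the $\PGLt$-side through the comparison diagrams of Proposition \ref{cor:chow diagrams} and subsection \ref{subsec:comp res}, where $[D_r]$ is the class of a stratum of the classical discriminant in $\PP(1,2n)$ and can be evaluated against the (correct) computations of \cite{FulViv}; or work $T$-equivariantly and express $[D_r]$ via the $W$-class pushforward formulas of Lemmas \ref{lm:Z_classes} and \ref{lm:pushforward_W_classes}, whose $2\lambda_3\xi$ corrections supply precisely the $2$-divisibility needed to absorb the odd factor once $2$ is inverted. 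This final $2$-adic bookkeeping is the genuine content of the statement; as the introduction emphasises, it is the analogue of this step in the \emph{integral} problem that was mishandled in \cite{FulViv}*{lemma $5.6$}.
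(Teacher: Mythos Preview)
Your reduction to the classes $\pi'_{r*}(H^{k})$ is fine, but the dichotomy you draw between $k\ge 1$ (``easy'') and $k=0$ (``hard'') is exactly backwards, and the marking argument you use for $k\ge 1$ is not correct. Concretely, take $r=2$ (so $s=1$) and $k=2$: the class $\pi'_{2*}(h_1^{2}\times 1)$ is precisely the extra generator in Corollary~\ref{cor:generators im i}, and Proposition~\ref{pr:generators im i with Z-two coefficients} shows it is \emph{not} in ${\rm im}(\pi'_{1*})$ integrally. So your assertion ``$\pi'_{r*}(H^{k})\in{\rm im}(\pi'_{1*})$ already integrally for every $k\ge 1$'' is false. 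The underlying error is the formula $\mathrm{add}_*(a\,b^{k-1})=h^{k}$: this holds non-equivariantly, but equivariantly the pushforward acquires lower-order correction terms in the torus weights (already for $d=2$ one computes $\mathrm{add}_*(ab)=h^{2}-(w_1+w_2)h+2w_1w_2$), and those corrections do not vanish in $A^*_{\GLt}(\cl{S})$. There is also a structural issue: for $r=2s$ your ``marked'' space is meant to be $\cl{Q}\times_{\cl{S}}(\text{degree }2s-1\text{ divisors})\times_{\cl{S}}\PP(V_{n-2s})$, but odd-degree symmetric products of the universal conic are not projective bundles over $\cl{S}$ (there is no global section), so the class $b$ you invoke does not exist as a hyperplane class. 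Conversely, the case you single out as hard is in fact easy: $[D_{r}]=\pi'_{2s*}(1\cdot 1)$ lies in ${\rm im}(\pi'_{1*})$ integrally by Lemma~\ref{lm:not highest degree classes are in I} (take $i=j=0$). The genuinely delicate classes are $\pi'_{2s*}(h_s^{2s}\cdot 1)$, as Remark~\ref{rmk:not ext} explains.

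The paper's own proof avoids all of this by a different, much shorter route: using the comparison diagrams one passes to the $\PGLt$-equivariant setting, and then Lemma~\ref{lm:ker_is_torsion} says the kernel of $A^*_{\PGLt}(-)\to A^*_{\SLt}(-)$ is $2$-torsion, hence vanishes after tensoring with $\mathbb{Z}[\tfrac12]$. The statement therefore reduces to the analogous inclusion in the $\SLt$-equivariant Chow ring, which is exactly the computation carried out in \cite{EdiFul} (with the single extra relation $c_1=0$). Both the even and odd $r$ cases are handled by the same device, so no separate marking or addition-map argument is needed.
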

In other terms, using $\ZZ[\frac{1}{2}]$-coefficients, the ideal ${\rm im}(i_*)$ coincides with the ideal ${\rm im}(\pi'_{1*})$, whose generators we computed in the previous section (proposition \ref{pr:im_pi_1}).

Next, we pass to $\ZZ_{(2)}$-coefficients. What we deduce at the end is the following result:
\begin{prop}\label{pr:generators im i with Z-two coefficients}
	We have ${\rm im}(i_*)\otimes_{\ZZ}\ZZ_{(2)}=(2h_n^2-2n(n-1)c_2,4(n-2)h_n,\pi'_{2*}(h_1^2\times [\PP(V_{n-2})]))$ and the inclusion ${\rm im}(\pi'_{1*})\otimes_{\ZZ}\ZZ_{(2)}\subset {\rm im}(i_*)\otimes_{\ZZ}\ZZ_{(2)}$ is strict.
\end{prop}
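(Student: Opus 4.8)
The plan is to combine the stratification identity $\mathrm{im}(i_*)=\sum_s\mathrm{im}(\pi'_{s*})$ of Lemma \ref{lm:chow_envelope} with the $\ZZ[1/2]$-statement of Proposition \ref{pr:generators_im i_Zhalf coefficients} and a single $2$-divisibility computation. After tensoring with $\ZZ_{(2)}$, the inclusion $\supseteq$ is immediate: the classes $2h_n^2-2n(n-1)c_2$ and $4(n-2)h_n$ generate $\mathrm{im}(\pi'_{1*})$ by Proposition \ref{pr:im_pi_1}, the class $C:=\pi'_{2*}(h_1^2\times[\PP(V_{n-2})])$ lies in $\mathrm{im}(\pi'_{2*})$ by construction, and all three sit inside $\sum_s\mathrm{im}(\pi'_{s*})=\mathrm{im}(i_*)$. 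Hence the real work is the reverse inclusion and the strictness.

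For the inclusion $\subseteq$ I would process the strata in increasing order and push each $\mathrm{im}(\pi'_{s*})\otimes\ZZ_{(2)}$ into the ideal $J:=(2h_n^2-2n(n-1)c_2,\ 4(n-2)h_n,\ C)$. By the projection formula $\mathrm{im}(\pi'_{s*})$ is generated by the pushforwards of monomials in the hyperplane classes of the source bundles, so it is enough to treat those. Each $\pi'_s$ factors through the squaring maps $\psi'$ and the multiplication maps of subsection \ref{subsec:comp res}, and since $(\psi'_s)^*h_{2s}=2h_s$ every squaring introduces a factor of $2$; this is the only mechanism through which the $\ZZ_{(2)}$-answer can differ from the $\ZZ[1/2]$-answer. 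Proposition \ref{pr:generators_im i_Zhalf coefficients} shows that each generator of $\mathrm{im}(\pi'_{s*})$ lies in $(2h_n^2-2n(n-1)c_2,4(n-2)h_n)$ once $2$ is inverted; tracking the resulting powers of $2$ in the denominators through the factorizations, I expect them to be absorbed precisely upon adjoining $C$, so that $\mathrm{im}(\pi'_{s*})\otimes\ZZ_{(2)}\subseteq J$ for all $s$. The base cases are the discriminant stratum $\pi'_1$ (which gives $(2h_n^2-2n(n-1)c_2,4(n-2)h_n)$ already integrally) and $\pi'_2$ (which contributes $C$), and the strata with larger double part carry at least one extra squaring and should reduce modulo $J$ to these.

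The strictness --- the point at which \cite{FulViv}*{lemma $5.6$} breaks down --- I would reduce to the assertion that $C$ is not divisible by $2$ in $A^*_{\GLt}(\PP(V_n))\otimes\ZZ_{(2)}\simeq\ZZ_{(2)}[c_2,c_3,h_n]/(2c_3,p_n(h_n))$. Both generators of $\mathrm{im}(\pi'_{1*})$ are manifestly of the form $2\cdot(\text{integral class})$, so $\mathrm{im}(\pi'_{1*})\otimes\ZZ_{(2)}\subseteq 2\cdot A^*_{\GLt}(\PP(V_n))\otimes\ZZ_{(2)}$; consequently $C\notin 2\cdot A^*_{\GLt}(\PP(V_n))\otimes\ZZ_{(2)}$ already forces $C\notin\mathrm{im}(\pi'_{1*})\otimes\ZZ_{(2)}$ and hence the strict inclusion. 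To compute $C$ I would pass to the maximal torus $T\subset\GLt$: there $h_1^2$ agrees with the $T$-equivariant class $[W_{1,1,0}]$ up to terms of lower order in $h_1$ (Lemma \ref{lm:Z_classes}), and Lemma \ref{lm:pushforward_W_classes}, applied with the appropriate indices, identifies $\pi'_{2*}([W_{1,1,0}]\times[\PP(V_{n-2})])$ with the monic class $[W_{n,2,0}]$, whose explicit form is given by Lemma \ref{lm:Z_classes}. Reading a suitable coefficient of the resulting polynomial in $h_n$ modulo $2$ should display an odd entry, which is exactly the $2$-divisibility Lemma \ref{lm:2-divisibility}.

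The main obstacle is this last computation. One has to expand $[W_{n,2,0}]$ far enough to isolate a coefficient that stays odd, and one has to control both the $2\lambda_3\xi$ correction in Lemma \ref{lm:Z_classes} and the lower-order corrections $\pi'_{2*}(\text{lower order in }h_1)$, making sure that neither secretly restores divisibility by $2$. A second, more clerical difficulty is the inductive verification that every stratum with large double part genuinely collapses into $J$ modulo the lower strata --- i.e. that no further generator is needed $2$-locally --- which is where the factorization through repeated squarings must be combined cleanly with Proposition \ref{pr:generators_im i_Zhalf coefficients}.
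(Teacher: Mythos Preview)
Your overall shape is right: prove the easy containment, then show each $\mathrm{im}(\pi'_{s*})\otimes\ZZ_{(2)}$ lands in $J$, and finally check $C$ is not $2$-divisible. The odd strata, the factorization $\pi'_{2s}=\psi'_{2s,n-2s}\circ(\psi'_s\times\mathrm{id})$, and the pullback identity $(\psi'_s)^*h_{2s}=2h_s$ are exactly what the paper uses to handle the classes $\pi'_{2s*}(h_s^i\cdot h_{n-2s}^j)$ with $i<2s$; combined with the $\SLt$-comparison and the identity $(c_3)\cap(2)=0$ in $A^*_{\GLt}(\PP(V_n))$, this shows these classes lie already in $\mathrm{im}(\pi'_{1*})$. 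Your sketch of strictness is also essentially the paper's argument.

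The genuine gap is the step you flag only as ``clerical'': showing that the \emph{top} classes $\pi'_{2s*}(h_s^{2s}\cdot 1)$ for $s\geq 2$ lie in $J$. Your proposed mechanism, ``larger double part carries at least one extra squaring'', is not correct---each $\pi'_{2s}$ involves exactly one squaring map $\psi'_s$, not several---and Remark \ref{rmk:not ext} explains precisely why the $2$-divisibility argument breaks for $i=2s$, so ``tracking powers of $2$'' cannot close this case. The paper's device is purely $T$-equivariant and geometric: one replaces $h_s^{2s}$ by the monic class $[W_{s,1,s-1}]$ (Lemma \ref{lm:Z_classes}), uses Lemma \ref{lm:pushforward_W_classes} to obtain $\pi'^{T}_{2s*}([W_{s,1,s-1}]\times 1)=[W_{n,2,2s-2}]$, and then invokes the transversal-intersection identity of Lemma \ref{lm:Z_classes_intersection},
\[
[W_{n,2,2s-2}]=[W_{n,2,0}]\cdot[W_{n,0,2s-2}],
\]
to see that \emph{all} these classes already lie in the principal ideal $([W_{n,2,0}])$. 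Since $[W_{1,1,0}]=h_1^2-(\lambda_2+\lambda_3)h_1+\lambda_2\lambda_3$ differs from $h_1^2$ only by lower-order terms (whose pushforwards are in $\mathrm{im}(\pi'^{T}_{1*})$ by Lemma \ref{lm:not highest degree classes are in I}), one gets $([W_{n,2,0}])+\mathrm{im}(\pi'^{T}_{1*})=(\pi'^{T}_{2*}(h_1^2\times 1))+\mathrm{im}(\pi'^{T}_{1*})$, and then Proposition \ref{pr:from T to G} brings this back to $\GLt$. You have all the ingredients listed in your strictness paragraph; the missing idea is to use Lemma \ref{lm:Z_classes_intersection} as the reduction of the higher even strata to the single class $[W_{n,2,0}]$, rather than only as a tool for computing $C$ modulo $2$. (A minor point: your reference to ``the $2$-divisibility Lemma \ref{lm:2-divisibility}'' for strictness is backwards---that lemma establishes divisibility for $i<2s$; the non-divisibility of $[W_{n,2,0}]$ comes from the explicit formula of Lemma \ref{lm:Z_classes}.)
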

The last two propositions together imply:
\begin{cor}\label{cor:generators im i}
	We have ${\rm im}(i_*)=(2h_n^2-2n(n-1)c_2,4(n-2)h_n,\pi'_{2*}(h_1^2\times [\PP(V_{n-2})]))$.
\end{cor}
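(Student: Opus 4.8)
The plan is to deduce the corollary from the two preceding propositions by a standard arithmetic fracture (local--global) argument, working throughout inside the ring $R:=A^*_{\GLt}(\PP(V_n))\simeq\ZZ[c_2,c_3,h_n]/(2c_3,p_n(h_n))$. Write $I:=(2h_n^2-2n(n-1)c_2,\,4(n-2)h_n,\,\pi'_{2*}(h_1^2\times[\PP(V_{n-2})]))$ for the proposed ideal; the goal is the equality of ideals ${\rm im}(i_*)=I$ in $R$.

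First I would check the easy containment $I\subseteq{\rm im}(i_*)$. By Proposition \ref{pr:im_pi_1} the first two generators of $I$ generate ${\rm im}(\pi'_{1*})$, and the third generator lies in ${\rm im}(\pi'_{2*})$ by definition; since Lemma \ref{lm:chow_envelope} identifies ${\rm im}(i_*)$ with the sum $\sum_s{\rm im}(\pi'_{s*})$, all three generators of $I$ belong to ${\rm im}(i_*)$, whence $I\subseteq{\rm im}(i_*)$. In particular the quotient $Q:={\rm im}(i_*)/I$ is a well-defined $\ZZ$-module, and the corollary amounts to the assertion $Q=0$.

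Next I would prove $Q=0$ by localizing at the two complementary arithmetic pieces. The map $Q\to Q\otimes_{\ZZ}\ZZ[\tfrac12]\oplus Q\otimes_{\ZZ}\ZZ_{(2)}$ is injective: an element in its kernel is annihilated both by a power of $2$ and by an odd integer, hence by their greatest common divisor $1$, so it vanishes. It therefore suffices to show $Q\otimes\ZZ[\tfrac12]=0$ and $Q\otimes\ZZ_{(2)}=0$. For the odd part, flatness of $\ZZ[\tfrac12]$ together with Lemma \ref{lm:chow_envelope} gives ${\rm im}(i_*)\otimes\ZZ[\tfrac12]=\sum_s{\rm im}(\pi'_{s*})\otimes\ZZ[\tfrac12]$, and Proposition \ref{pr:generators_im i_Zhalf coefficients} collapses this sum onto ${\rm im}(\pi'_{1*})\otimes\ZZ[\tfrac12]$; since ${\rm im}(\pi'_{1*})\subseteq I\subseteq{\rm im}(i_*)$, tensoring this sandwich with the flat module $\ZZ[\tfrac12]$ forces $I\otimes\ZZ[\tfrac12]={\rm im}(i_*)\otimes\ZZ[\tfrac12]$, i.e.\ $Q\otimes\ZZ[\tfrac12]=0$. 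For the $2$-primary part, Proposition \ref{pr:generators im i with Z-two coefficients} states precisely that ${\rm im}(i_*)\otimes\ZZ_{(2)}$ equals the ideal generated by the three elements listed in $I$, that is $I\otimes\ZZ_{(2)}$, so $Q\otimes\ZZ_{(2)}=0$.

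Combining the two vanishings with the injectivity above yields $Q=0$, hence ${\rm im}(i_*)=I$. The genuine mathematical content has already been discharged in Propositions \ref{pr:generators_im i_Zhalf coefficients} and \ref{pr:generators im i with Z-two coefficients}; the only points requiring care in assembling the corollary are the flatness used to commute $\otimes\,\ZZ[\tfrac12]$ past the finite sum of Lemma \ref{lm:chow_envelope}, and the elementary fracture lemma guaranteeing that the prime $2$ and the odd primes jointly detect the vanishing of $Q$. I do not expect any serious obstacle here, since this step is purely formal once the two localized computations are in hand.
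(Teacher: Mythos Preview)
Your argument is correct and is precisely the standard local--global (fracture) argument that the paper leaves implicit: the paper gives no proof of the corollary beyond the phrase ``The last two propositions together imply,'' and your proposal spells out exactly how Propositions \ref{pr:generators_im i_Zhalf coefficients} and \ref{pr:generators im i with Z-two coefficients} combine via the injection $Q\hookrightarrow Q\otimes\ZZ[\tfrac12]\oplus Q\otimes\ZZ_{(2)}$.
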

Using theorem \ref{cor:chow diagrams} we see that the corollary above, interpreted in the $\PGLt$-equivariant setting, says that the image of
$$ i_*:A^*_{\PGLt}(\Delta)\longrightarrow A^*_{\PGLt}(\PP(1,2n))$$
is equal to the image of
$$ \pi_{1*}:A^*_{\PGLt}(\PP(1,1)\times\PP(1,2n-2))\longrightarrow A^*_{\PGLt}(\PP(1,2n))$$
plus the cycle $\pi_{2*}(H^2\times 1)$, where $H$ is the hyperplane section of $\PP(1,2)$ and the morphism $\pi_2$ is
$$ \pi_2:\PP(1,2)\times\PP(1,2n-4)\longrightarrow \PP(1,2n),\quad (f,g)\longmapsto f^2g$$
In particular, the inclusion ${\rm im}(\pi_{1*})\subset {\rm im}(i_*)$ is strict. Instead, in \cite{FulViv}*{proposition 5.3} was stated that ${\rm im}(\pi_{r*})\subset {\rm im}(\pi_{1*})$.

To prove proposition \ref{pr:generators im i with Z-two coefficients}, we initially work with $\GLt$-equivariant Chow rings, but then we start using the $T$-equivariant ones, and we complete the computation of ${\rm im}(i_*^T)\otimes_{\ZZ}\ZZ_{(2)}$ in this different setting. Then, using what we have found exploiting the $T$-equivariant Chow rings, we go back to the $\GLt$-equivariant setting and we finish the computation of the generators of ${\rm im}(i_*)\otimes_{\ZZ}\ZZ_{(2)}$ .

We initially follow the path of \cite{FulViv} but at a certain point we diverge. Indeed, as said before, the computation is completed in the $T$-equivariant setting, mainly because we start working with cycle classes of subvarieties that are only $T$-invariant and not $\GLt$-invariant. In particular, these classes do not have an analogue in the $\PGLt$-equivariant setting that is adopted in \cite{FulViv}. This is where we really need the new presentation given by theorem \ref{thm:presentation_Hg}.
\subsection{Computations with $\mathbb{Z}[\frac{1}{2}]$-coefficients}
Proposition \ref{pr:generators_im i_Zhalf coefficients} is a direct consequence of lemma \ref{lm:chow_envelope} and \cite{FulViv}*{subsec. 5.7}. In the author's Ph.D. thesis \cite{Dil} it can be found a direct proof of proposition  \ref{pr:generators_im i_Zhalf coefficients} which does not rely on \cite{FulViv}.
%In order to obtain the relations for the Chow ring of $\cl{H}_g$, we use the morphism
%$$ A^*_{\GLt}(U)\twoheadrightarrow A^*_{\GLt}(U') $$
%introduced in the third section.  Tensoring with $\ZZ[\frac{1}{2}]$ does not affect in any way the surjectivity nor the kernel, so that in order to compute $A^*(\cl{H}_g)\otimes_{\ZZ}\ZZ[\frac{1}{2}]$ we need simply to add the relation $-2\tau-h_n=0$. Putting together proposition \ref{pr:generators_im i_Zhalf coefficients} and corollary \ref{cor:chow_ring_Hg_intermediate}, we readily deduce proposition \ref{pr:Chow ring H_g with Z-half coefficients}.
\subsection{Computations with $\ZZ_{(2)}$-coefficients, first part}
Throughout this subsection, we will assume that every Chow ring and every ideal appearing is tensored with $\ZZ_{(2)}$. %Moreover, we will work with both $T$-equivariant and $\GLt$-equivariant Chow rings. 
We start by recalling the content of \cite{FulViv}*{lemma $5.4$}:
\begin{lm}\label{lm:ker_is_torsion}
	Let $X$ be a smooth scheme on which ${\rm PGL}_n$ acts, and consider the induced action of ${\rm SL}_n$ via the quotient map ${\rm SL}_n\to{\rm PGL}_n$. Then the kernel of the pullback map $A^*_{{\rm PGL}_n}(X)\to A^*_{{\rm SL}_n}(X)$ is of $n$-torsion.
\end{lm}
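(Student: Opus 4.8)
The plan is to produce a transfer (norm) homomorphism $N\colon A^*_{{\rm SL}_n}(X)\to A^*_{{\rm PGL}_n}(X)$ satisfying $N\circ j^*=n\cdot\mathrm{id}$, where $j^*$ is the pullback in the statement. Granting such an $N$, any class $\alpha$ with $j^*\alpha=0$ satisfies $n\alpha=N(j^*\alpha)=0$, so $\ker j^*$ is annihilated by $n$. The starting observation is that $j^*$ is the pullback $\pi^*$ along the morphism of quotient stacks $\pi\colon[X/{\rm SL}_n]\to[X/{\rm PGL}_n]$ induced by ${\rm SL}_n\to{\rm PGL}_n$, whose kernel is the \emph{central} subgroup $\mu_n$; thus $\pi$ is a gerbe banded by $\mu_n$. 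I work throughout in the equivariant Chow groups of \cite{EdiGra}, so that proper pushforward, the projection formula, and the computation $A^*(B\mu_n)\simeq\mathbb{Z}[t]/(nt)$ are at my disposal (and, where needed, I would make these rigorous by the usual finite-dimensional approximation of the classifying stacks).

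First I would attach to the torsor its Severi--Brauer bundle. The standard action of ${\rm PGL}_n$ on $\mathbb{P}^{n-1}$ turns $X\times\mathbb{P}^{n-1}$ into a ${\rm PGL}_n$-scheme, and I set $p\colon\mathbb{P}:=[(X\times\mathbb{P}^{n-1})/{\rm PGL}_n]\to\mathcal{Z}:=[X/{\rm PGL}_n]$, a smooth proper representable morphism with fibre $\mathbb{P}^{n-1}$. Base-changing along $\pi$ produces a cartesian square whose remaining sides are $\bar p\colon\widetilde{\mathbb{P}}:=[(X\times\mathbb{P}^{n-1})/{\rm SL}_n]\to\mathcal{Y}:=[X/{\rm SL}_n]$ and an induced $\mu_n$-gerbe $q\colon\widetilde{\mathbb{P}}\to\mathbb{P}$, so that $q^*p^*=\bar p^*\pi^*$. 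The key point is that $\widetilde{\mathbb{P}}$ is the honest projectivization $\mathbb{P}(\mathcal{E})$ of the vector bundle $\mathcal{E}$ associated with the standard representation of ${\rm SL}_n$, and therefore carries the tautological line bundle $\mathcal{O}(1)$. The central $\mu_n$ acts on $\mathcal{O}(1)$ through a generating character of $\widehat{\mu_n}$, so $\mathcal{O}(1)$ trivializes the band: the gerbe $q$ is \emph{neutral} and admits a section $s$ with $q\circ s=\mathrm{id}$. Consequently $s^*\circ q^*=\mathrm{id}$, and $q^*$ is injective.

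With this the argument closes. If $\pi^*\alpha=0$, then commutativity of the square gives $q^*(p^*\alpha)=\bar p^*(\pi^*\alpha)=0$, and injectivity of $q^*$ forces $p^*\alpha=0$ in $A^*(\mathbb{P})$. On the other hand the relative tangent bundle $T_p$ is ${\rm PGL}_n$-equivariant, being the fibrewise tangent bundle of the family $\mathbb{P}^{n-1}$ with its $\mathrm{Aut}(\mathbb{P}^{n-1})={\rm PGL}_n$-linearization, and its top Chern class integrates over the fibre to the degree of $c_{n-1}(T_{\mathbb{P}^{n-1}})$, namely
$$p_*\big(c_{n-1}(T_p)\big)=\chi(\mathbb{P}^{n-1})=n .$$
Hence the projection formula yields $n\alpha=p_*\big(c_{n-1}(T_p)\cdot p^*\alpha\big)=0$, which is exactly the desired transfer identity evaluated on $\ker\pi^*$. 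This proves that the kernel is killed by $n$; the surjectivity of $\pi^*$ asserted in the statement I would establish separately from the same geometric picture.

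The step I expect to be the real obstacle is the neutrality of the pulled-back gerbe $q$ and the resulting injectivity of $q^*$. This is precisely where the non-specialness of ${\rm PGL}_n$, emphasized in the introduction, intervenes: $\mathcal{O}(1)$ is only ${\rm SL}_n$-equivariant and does \emph{not} descend to $\mathbb{P}$ (only $\mathcal{O}(n)$ does), so $\mathbb{P}^{n-1}$ is merely a twisted projective space for ${\rm PGL}_n$ and the ordinary projective bundle formula is unavailable over $\mathcal{Z}$. The content of the lemma is exactly that this twist is $n$-torsion, and the mechanism above converts the non-triviality of the gerbe $\pi$ into the numerical factor $\chi(\mathbb{P}^{n-1})=n$. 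I would spend most of the care verifying that the weight of $\mu_n$ on $\mathcal{O}(1)$ is a generator of $\widehat{\mu_n}$ (so that the section $s$ genuinely exists) and in justifying the manipulations of equivariant Chow rings via approximation.
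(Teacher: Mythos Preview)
The paper does not supply its own proof of this lemma; it simply quotes \cite{FulViv}*{Lemma~5.4}. So there is nothing in the paper to compare your argument against.

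Your strategy is natural, and the projection--formula step $n\alpha=p_*\big(c_{n-1}(T_p)\cdot p^*\alpha\big)$ is correct. The gap is in your claim that the $\mu_n$-gerbe $q:\widetilde{\mathbb{P}}\to\mathbb{P}$ is \emph{neutral}. The existence of a line bundle $\mathcal{O}(1)$ on $\widetilde{\mathbb{P}}$ on which the inertial $\mu_n$ acts through a generator of $\widehat{\mu_n}$ only trivializes the associated $\mathbb{G}_m$-gerbe (equivalently, the Brauer class); it does \emph{not} trivialize the $\mu_n$-gerbe. Indeed, for any line bundle $M$ on a base $Z$, the gerbe of $n$-th roots of $M$ always carries a tautological weight-$1$ line bundle (the universal root), yet is neutral precisely when $M$ admits an $n$-th root in $\mathrm{Pic}(Z)$. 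In your situation $\mathcal{O}(n)$ descends to $\mathbb{P}$ and $q$ is the gerbe of its $n$-th roots; neutrality of $q$ would mean exactly that $\mathcal{O}(1)$ descends to $\mathbb{P}$, which---as you yourself note in the last paragraph---it does not. Concretely, for $n=2$ and $X=\mathrm{Spec}(k)$ one has $\mathbb{P}=BB$ and $\widetilde{\mathbb{P}}=BB'$ for the Borels $B\subset\PGLt$, $B'\subset\SLt$, and the extension $1\to\mu_2\to B'\to B\to 1$ does not split, so $q$ admits no section.

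Without neutrality the injectivity of $q^*$ is unproven, and attempts to salvage it via the neutral $\mathbb{G}_m$-gerbe $[X\times\mathbb{P}^{n-1}/\mathrm{GL}_n]\to\mathbb{P}$ run into the relation $c_1(\det)=nt+m$ with $m\in A^1(\mathbb{P})$ possibly nonzero; chasing this yields only that $\ker q^*$ is annihilated by a \emph{power} of $n$, which combined with your transfer gives $n^k$-torsion rather than $n$-torsion. So the argument as written does not establish the sharp bound in the lemma, and a different idea is needed to close the gap.
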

The main result is the next proposition.
\begin{prop} \label{pr:generators first reduction}
	The ideal ${\rm im}(i_*)\otimes_{\mathbb{Z}}\ZZ_{(2)}$ is equal to the sum of the ideal ${\rm im}(\pi'_{1*})\otimes_{\mathbb{Z}}\ZZ_{(2)}$ and the ideal generated by the elements $\pi'_{2s*}(h_s^{2s}\cdot 1)$ for $s=1,...,n/2$
\end{prop}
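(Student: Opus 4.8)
The plan is to combine the Chow-envelope description of $\mathrm{im}(i_*)$ from Lemma \ref{lm:chow_envelope} with a chain of factorizations of the stratifying maps $\pi'_r$ through the first one. The guiding principle, made available by the presentation of Theorem \ref{thm:presentation_Hg}, is that on the conic one has $V_m=H^0(Q,\OO_Q(m))$, so that a form being tangent to $Q$ at a point $p$ is the same as its being divisible by the restriction $\ell_p$ of the tangent line at $p$. Since $\mathrm{im}(i_*)=\sum_r\mathrm{im}(\pi'_{r*})$, it is enough to show that each $\mathrm{im}(\pi'_{r*})$ is contained in $\mathrm{im}(\pi'_{1*})$ together with the classes $\pi'_{2s*}(h_s^{2s}\cdot 1)$.

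First I would treat the odd strata. A point of $D_{2s+1}$ has the form $(q,\ell f^2 g)$ with $\ell$ a tangent line, and $\ell f^2 g$ is divisible by $\ell$; hence the proper map $\mathcal{Y}_1\times_{\cl{S}}\PP(V_s)\times_{\cl{S}}\PP(V_{n-2s-1})\to\PP(V_n)$ sending $(q,\ell,f,g)$ to $(q,\ell f^2g)$ factors through $\pi'_1$ via $(q,\ell,f,g)\mapsto(q,\ell,f^2g)$. As this map still dominates $D_{2s+1}$, I may use it when assembling a Chow envelope of $D$, so the odd strata contribute nothing beyond $\mathrm{im}(\pi'_{1*})$.

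For an even stratum I would argue in two steps. The identity ${\pi'_{2s}}^*h_n=2h_s+h_{n-2s}$ and the projection formula yield $\pi'_{2s*}(h_s^a h_{n-2s}^{b})=h_n\,\pi'_{2s*}(h_s^a h_{n-2s}^{b-1})-2\,\pi'_{2s*}(h_s^{a+1}h_{n-2s}^{b-1})$, which eliminates all powers of $h_{n-2s}$; together with the relation $p_s(h_s)=0$ (the monic relation of degree $2s+1$) this shows that $\mathrm{im}(\pi'_{2s*})$ is generated as an ideal by $\pi'_{2s*}(h_s^a)$ for $0\le a\le 2s$. To dispose of the generators with $a<2s$ I would use a second factorization: over the discriminant divisor $\{f\ \text{tangent to}\ Q\}\subset\PP(V_s)$ the form $f$ acquires a tangent-line factor, so $\pi'_{2s}$ restricted there factors through $\pi'_1$; since $A^1(\cl{S})=0$, the class of this divisor is exactly $2(2s-1)h_s$, and pushing $h_s^{a-1}$ forward against it gives $2(2s-1)\,\pi'_{2s*}(h_s^a)\in\mathrm{im}(\pi'_{1*})$ for $a\ge 1$. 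Combining this with the factorizations of $\pi'_{2s}$ through the lower even strata obtained by splitting $f=f_1f_2$ on $Q$ (so that $(f_1f_2)^2g=f_1^2(f_2^2g)$ factors through $\pi'_{2s_1}$), one reduces the generators with $a<2s$ to $\mathrm{im}(\pi'_{1*})$ and the top classes, leaving $\pi'_{2s*}(h_s^{2s}\cdot 1)$ as the only new generator in each even degree.

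The hard part will be the $2$-primary bookkeeping in this last step. Working $\GLt$-equivariantly, the only invariant divisor on $\PP(V_s)$ proportional to $h_s$ is the discriminant, whose coefficient $2(2s-1)$ is even, so one only gets $2\cdot\pi'_{2s*}(h_s^a)\in\mathrm{im}(\pi'_{1*})$ rather than the class itself; a genuine $2$-torsion defect therefore survives, and isolating it as the single top generator requires the explicit low-degree relations (for instance the exact value of $[D_{2s}]=\pi'_{2s*}(1)$) and the comparison diagrams of Proposition \ref{cor:chow diagrams}. This is precisely why the argument is carried out with $\ZZ_{(2)}$-coefficients, and why it is completed in the following subsection by passing to the maximal torus $T$, where the coordinate hyperplanes $\{a_{\underline{i}}=0\}$ furnish invariant divisors with odd leading term that $\GLt$ cannot provide.
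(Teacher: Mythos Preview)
Your treatment of the odd strata and your reduction of the second projective factor via ${\pi'_{2s}}^{*}h_n=2h_s+h_{n-2s}$ are both correct and match the paper. The gap is in the even case, at the step where you must show that $\pi'_{2s*}(h_s^{a}\cdot 1)\in\mathrm{im}(\pi'_{1*})$ for $0\le a\le 2s-1$. Your discriminant--divisor factorization only yields $2(2s-1)\,\pi'_{2s*}(h_s^{a})\in\mathrm{im}(\pi'_{1*})$, i.e.\ twice the class, and you acknowledge this. Your proposed cure, factoring $f=f_1f_2$ on $Q$ to push the problem down to a lower even stratum, does not help: the multiplication map $\PP(V_{s_1})\times_{\cl{S}}\PP(V_{s_2})\to\PP(V_s)$ is generically $\binom{2s}{2s_1}$--to--one, an \emph{even} degree, so over $\ZZ_{(2)}$ its pushforward is not surjective and you cannot recover all of $\mathrm{im}(\pi'_{2s*})$ from the lower stratum. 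You are therefore left with exactly the $2$--primary defect you flagged, and the argument as written does not close it.

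The paper closes this gap by a different mechanism that does not appear in your outline. First one proves a $2$--divisibility lemma (Lemma~\ref{lm:2-divisibility}): for $a<2s$ the class $\pi'_{2s*}(h_s^{a}\cdot h_{n-2s}^{j})$ is divisible by $2$, simply because $\pi'^{*}_{2s}h_n=2h_s+h_{n-2s}$ forces $h_n\cdot\pi'_{2s*}(h_s^{a})=2\,\pi'_{2s*}(h_s^{a+1})$ and multiplication by $h_n$ is injective on classes of low degree. Then one uses the comparison with $\SLt$ (not with $T$): by \cite{EdiFul} one already knows $\mathrm{im}(\pi^{\SLt}_{2s*})\subset\mathrm{im}(\pi^{\SLt}_{1*})$, and the kernel of $A^*_{\GLt}(\PP(V_n))\to A^*_{\SLt}(\PP(1,2n))$ is generated by $c_3$. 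Hence $\pi'_{2s*}(h_s^{a}\cdot h_{n-2s}^{j})\equiv\text{(element of }\mathrm{im}(\pi'_{1*}))\pmod{c_3}$. Now both $\mathrm{im}(\pi'_{1*})$ and (by the $2$--divisibility lemma) $\pi'_{2s*}(h_s^{a}\cdot h_{n-2s}^{j})$ lie in the ideal $(2)$, so the correction term lies in $(2)\cap(c_3)=(0)$ because $2c_3=0$ in $A^*_{\GLt}(\cl{S})$. This gives $\pi'_{2s*}(h_s^{a}\cdot h_{n-2s}^{j})\in\mathrm{im}(\pi'_{1*})$ on the nose for $a<2s$. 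The passage to the maximal torus in the next subsection is \emph{not} used to prove Proposition~\ref{pr:generators first reduction}; it serves the separate purpose of identifying the remaining top generators $\pi'_{2s*}(h_s^{2s}\cdot 1)$ with a single explicit class $[W_{n,2,0}]$.
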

%\begin{prop}\label{pr:generators_im i_Z2 coefficients}
%We have the following equivalence of ideals inside $A^*_T(\PP(V_n))$:$${\rm im}(i_*)\otimes_{\ZZ}\ZZ_{(2)}=(2h_n^2-2n(n-1)c_2,4(n-2)h_n,[W_{n,2,0}])\otimes_{\ZZ}\ZZ_{(2)}$$ Moreover, the ideal ${\rm im}(\pi'_{1*})\otimes_{\ZZ}\ZZ_{(2)}$ is strictly contained inside ${\rm im}(i_*)\otimes_{\ZZ}\ZZ_{(2)}$.
%\end{prop}
Recall from lemma \ref{lm:chow_envelope} that the ideal ${\rm im}(i_*)$ is the sum of the ideals ${\rm im}(\pi'_{r*})$. The following lemma is an immediate consequence of lemma \ref{lm:chow_envelope} and \cite{FulViv}*{lemma $5.5$}. An alternative proof in the $\GLt$-equivariant setting can be found in \cite{Dil}.
\begin{lm}\label{lm:odd case Z2 coefficients}
	We have ${\rm im}(\pi'_{2s+1*})\otimes_{\mathbb{Z}}\ZZ_{(2)}\subset{\rm im}(\pi'_{1*})\otimes_{\ZZ} \ZZ_{(2)}$.
\end{lm}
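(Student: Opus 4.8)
The plan is to exploit the factorization of $\pi'_{2s+1}$ that is built into the very definition of $\cl{Y}_{2s+1}$. Recall that $\cl{Y}_{2s+1}$ is by construction the image of the proper morphism $\phi\colon\cl{Y}_1\times_{\cl{S}}\PP(V_s)\to\PP(V_{2s+1})$, $(q,l,f)\mapsto(q,lf^2)$. First I would form the ``big'' source $\cl{T}:=\cl{Y}_1\times_{\cl{S}}\PP(V_s)\times_{\cl{S}}\PP(V_{n-2s-1})$ and observe that there are two ways to map it to $\PP(V_n)$ that visibly coincide: on the one hand $\pi'_{2s+1}\circ(\phi\times\mathrm{id})$ sends $(q,l,f,g)$ to $(q,(lf^2)g)$; on the other hand $\pi'_1\circ(\mathrm{id}\times\pi'_{s,n-2s-1})$ sends $(q,l,f,g)$ first to $(q,l,f^2g)\in\cl{Y}_1\times_{\cl{S}}\PP(V_{n-1})$ and then to $(q,l(f^2g))$. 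Since $(lf^2)g=l(f^2g)$, the square
$$\xymatrix{
\cl{T} \ar[r]^-{\phi\times\mathrm{id}} \ar[d]_{\mathrm{id}\times\pi'_{s,n-2s-1}} & \cl{Y}_{2s+1}\times_{\cl{S}}\PP(V_{n-2s-1}) \ar[d]^{\pi'_{2s+1}} \\
\cl{Y}_1\times_{\cl{S}}\PP(V_{n-1}) \ar[r]^-{\pi'_1} & \PP(V_n)
}$$
commutes.

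The second, and crucial, step is to show that $(\phi\times\mathrm{id})_*$ is surjective after tensoring with $\ZZ_{(2)}$. For this I would check that $\phi$ is generically finite of \emph{odd} degree $2s+1$ onto the irreducible scheme $\cl{Y}_{2s+1}$. Indeed, a general point of $\cl{Y}_{2s+1}$ is a pair $(q,h)$ with $h|_Q=2E$ for a reduced divisor $E$ of degree $2s+1$ on the smooth conic $Q$; a preimage under $\phi$ amounts to singling out one point $p$ in the support of $E$, taking $l$ to be the unique tangent line to $Q$ at $p$ and $f$ to be the form, unique up to scalar by the fibrewise identification $\PP(V_s)\simeq\PP(H^0(Q,\OO_Q(s)))$ from Section \ref{sec:new pres}, with $f|_Q=E-p$. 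There are exactly $2s+1$ such choices, so $\deg\phi=2s+1$, and the same degree holds for $\phi\times\mathrm{id}$. By the projection formula $(\phi\times\mathrm{id})_*(\phi\times\mathrm{id})^*\alpha=(2s+1)\,\alpha$, and since $2s+1$ is invertible in $\ZZ_{(2)}$ the pushforward $(\phi\times\mathrm{id})_*$ is surjective onto $A^*_{\GLt}(\cl{Y}_{2s+1}\times_{\cl{S}}\PP(V_{n-2s-1}))\otimes\ZZ_{(2)}$.

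Finally I would combine the two steps. Given any class $\alpha$ on $\cl{Y}_{2s+1}\times_{\cl{S}}\PP(V_{n-2s-1})$, write $\alpha=(\phi\times\mathrm{id})_*\beta$ by surjectivity; then functoriality of pushforward together with the commutative square give
$$\pi'_{2s+1*}\alpha=\pi'_{2s+1*}(\phi\times\mathrm{id})_*\beta=\pi'_{1*}(\mathrm{id}\times\pi'_{s,n-2s-1})_*\beta\in{\rm im}(\pi'_{1*}).$$
As $\alpha$ was arbitrary this yields ${\rm im}(\pi'_{2s+1*})\otimes\ZZ_{(2)}\subset{\rm im}(\pi'_{1*})\otimes\ZZ_{(2)}$, which is the claim.

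I expect the main obstacle to be the degree computation: one must verify rigorously that $\phi$ is generically finite, so that $\deg\phi$ is defined and the identity $\phi_*\phi^*=\deg\phi$ applies, and that the generic fibre really consists of exactly $2s+1$ reduced points. This rests on checking that for a general $h\in\cl{Y}_{2s+1}$ the divisor $E$ is reduced and that each point of its support recovers the tangency datum $(l,f)$ uniquely, which in turn uses the fibrewise isomorphism $V_s\simeq H^0(Q,\OO_Q(s))$. The oddness of $2s+1$ is precisely what makes the pushforward surjective integrally over $\ZZ_{(2)}$, and is the structural reason the odd strata contribute no new generators.
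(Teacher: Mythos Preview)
Your proposal is correct and takes essentially the same approach as the paper: the same commutative square (with the roles of horizontal and vertical arrows interchanged) and the same key observation that $\phi\times\mathrm{id}$ has odd degree $2s+1$, so that its pushforward is surjective after inverting $2$. Your treatment is in fact slightly more careful than the paper's, which asserts that $(\phi\times\mathrm{id})_*$ is an \emph{isomorphism} and writes $(\phi\times\mathrm{id})_*^{-1}$; only surjectivity is needed, and that is what you prove via $\phi_*\phi^*=(2s+1)\,\mathrm{id}$.
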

Now we want to study the image of $\pi'_{2s*}$. Observe that this ideal is generated by the pushforward of the classes $h_{2s}^i\cdot h_{n-2s}^j$, for $i=0,...,2s$ and $j=0,...,2n-4s$, where we use the notational shorthand $h_{2s}^i\cdot h_{n-2s}^j$ to indicate what should be more correctly denoted as ${\rm pr}_1^*h_{2s}^i\cdot{\rm pr}_2^* h_{n-2s}^j$. An intermediate result is the following lemma:
\begin{lm}\label{lm:not highest degree classes are in I}
	We have that $\pi'_{2s*}(h_s^i\cdot h_{n-2s}^j)$ is in ${\rm im}(\pi'_{1*})$ for $i=0,...,2s-1$ and $j=0,...,4n-2s$.
\end{lm}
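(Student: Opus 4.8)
The plan is to exploit a \emph{tangent-line factorization} of $\pi'_{2s}$ through $\pi'_1$. Concretely, let $\beta:\cl{Y}_1\times_{\cl S}\PP(V_{s-1})\to\PP(V_s)$ be the tangent-line multiplication $(q,l,a)\mapsto(q,la)$, and define the map $\theta:\cl{Y}_1\times_{\cl S}\PP(V_{s-1})\times_{\cl S}\PP(V_{n-2s})\to\cl{Y}_1\times_{\cl S}\PP(V_{n-1})$ by $(q,l,a,g)\mapsto(q,l,la^2g)$ (the degree works out since $1+2(s-1)+(n-2s)=n-1$). Then $\pi'_{2s}\circ(\beta\times\mathrm{id})=\pi'_1\circ\theta$, because both send $(q,l,a,g)$ to $(q,l^2a^2g)$. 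Consequently $\mathrm{im}\bigl(\pi'_{2s*}\circ(\beta\times\mathrm{id})_*\bigr)\subseteq\mathrm{im}(\pi'_{1*})$, so it suffices to show that each $h_s^i$ (with $i<2s$) lies in $\mathrm{im}(\beta_*)$ modulo classes whose $\pi'_{2s*}$-pushforward is already in $\mathrm{im}(\pi'_{1*})$.

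The second step is to compute $\beta_*$ on the standard $A^*_{\GLt}(\cl S)$-basis. Working fibrewise, over a geometric point $\beta$ becomes the birational map $\PP^1\times\PP^{2s-2}\to\PP^{2s}$ onto the tangency discriminant (a hypersurface of degree $4s-2$, the discriminant of the binary form $f|_Q\in H^0(\PP^1,\OO(2s))$). Using $\beta^*h_s=\zeta+h_{s-1}$ and the projection formula, one finds $\beta_*(\zeta\, h_{s-1}^k)=h_s^{k+2}+(\text{lower }h_s\text{-order})$ and $\beta_*(h_{s-1}^k)=(4s-2-k)h_s^{k+1}+\cdots$. Here the crucial simplification is that $A^1_{\GLt}(\cl S)=0$ (as $A^*_{\GLt}(\cl S)=\ZZ[c_2,c_3]/(2c_3)$ starts in degree $2$), which kills codimension-one corrections; in particular $\beta_*1=[\Sigma_s]=(4s-2)h_s$ exactly. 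From the $\zeta$-classes we get $h_s^i$ with coefficient $1$ for every $2\le i\le 2s$. An ascending induction on $i$, absorbing the lower-order base-class corrections into the ideal $\mathrm{im}(\pi'_{1*})$, then reduces the entire lemma to the two base cases $i=0$ and $i=1$.

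For the base cases the factorization still yields divisibility statements, but not quite the cleared result. Taking $\xi=1$ gives $(4s-2)h_s=[\Sigma_s]\in\mathrm{im}(\beta_*)$, hence, since $2s-1$ is invertible in $\ZZ_{(2)}$, the relation $2\,\pi'_{2s*}(h_s\,h_{n-2s}^j)\in\mathrm{im}(\pi'_{1*})$ for the case $i=1$; for $i=0$ the universal-root incidence cover $\widetilde X=\{(q,f,g,p):p\in Q\cap F\}\to\PP(V_s)\times_{\cl S}\PP(V_{n-2s})$ (finite of degree $2s$, peeling the tangent line at a zero of $f$, and again factoring $\pi'_{2s}$ through $\pi'_1$) yields $2s\,\pi'_{2s*}(h_{n-2s}^j)\in\mathrm{im}(\pi'_{1*})$. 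The remaining task is to remove these factors of $2$, and this is exactly where the difficulty concentrates. I would handle it by a downward induction on $s$, using the secondary factorization $\pi'_{2s}\circ(\beta\times\mathrm{id})=\pi'_{2(s-1)}\circ\theta'$ with $\theta'(q,l,a,g)=(q,a,l^2g)$ (peeling the square $l^2$), which reduces the base cases to $s=1$, and there comparing $\pi'_{2*}(h_{n-2}^j)$ and $\pi'_{2*}(h_1\,h_{n-2}^j)$ directly against the explicit generators $\mathrm{im}(\pi'_{1*})=(2h_n^2-2n(n-1)c_2,\,4(n-2)h_n)$ of Proposition \ref{pr:im_pi_1}.

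The main obstacle is precisely this final $2$-adic bookkeeping in the cases $i=0,1$: both natural coverings carry even multiplicities ($4s-2$ and $2s$), so one must show by explicit computation that the low-degree pushforwards genuinely lie in the even ideal $\mathrm{im}(\pi'_{1*})$ rather than merely being annihilated by a power of $2$ modulo it. This is the reason the argument is carried out with $\ZZ_{(2)}$-coefficients and cannot be settled by the factorizations alone; it is also the point at which, anticipating the next subsection, the finer $T$-equivariant control (through the classes $[W_{n,r,l}]$) becomes the decisive tool.
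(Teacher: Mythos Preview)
Your proposal is not a complete proof, and you essentially concede this in the final paragraph. The geometric factorizations you construct are correct, but they all carry even degree ($4s-2$, $2s$), so you never extract $h_s^0$ or $h_s^1$ from $\mathrm{im}(\beta_*)$; you only obtain $2\,\pi'_{2s*}(h_s\cdot h_{n-2s}^j)$ and $2s\,\pi'_{2s*}(h_{n-2s}^j)$ inside $\mathrm{im}(\pi'_{1*})$. The downward induction to $s=1$ does not help: at $s=1$ you still need $\pi'_{2*}(1\cdot h_{n-2}^j)$ and $\pi'_{2*}(h_1\cdot h_{n-2}^j)$ to lie in the ideal $(2h_n^2-2n(n-1)c_2,\,4(n-2)h_n)$, and you offer no mechanism for verifying this. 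Your closing remark that the $T$-equivariant classes $[W_{n,r,l}]$ are ``the decisive tool'' here is mistaken: in the paper those classes are used only \emph{after} this lemma, to analyze the remaining generator $\pi'_{2s*}(h_s^{2s}\cdot 1)$.

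The paper's argument is entirely different and avoids any explicit factorization. It combines three ingredients you do not use. First, the $\SLt$-equivariant computation of Edidin--Fulghesu already gives $\mathrm{im}(\pi^{\SLt}_{2s*})\subset\mathrm{im}(\pi^{\SLt}_{1*})$, so via the diagram comparing $\GLt$, $\PGLt$ and $\SLt$ one gets $\pi'_{2s*}(h_s^i\cdot h_{n-2s}^j)\in\mathrm{im}(\pi'_{1*})+(c_3)$. Second, Lemma~\ref{lm:2-divisibility} (proved just before, via $\pi'^{*}_{2s}h_n=2h_s+h_{n-2s}$ and the projection formula) shows that $\pi'_{2s*}(h_s^i\cdot h_{n-2s}^j)\in(2)$ for $i<2s$; since also $\mathrm{im}(\pi'_{1*})\subset(2)$, the correction term $c_3\cdot\xi$ lies in $(2)$. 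Third, the purely algebraic fact $(c_3)\cap(2)=0$ in $A^*_{\GLt}(\PP(V_n))$ (because $2c_3=0$ and $c_3$ is a non-zerodivisor modulo $2$) forces $c_3\cdot\xi=0$, finishing the proof. The $2$-divisibility lemma is exactly the tool that disposes of the parity obstruction on which your approach founders.
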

In order to prove the lemma above we need a technical result, which can be found also in \cite{FulViv}, with the exception that there the authors claim the result also for $i=2s$. The proof works exactly in the same way.
\begin{lm}\label{lm:2-divisibility}
	We have that $\pi'_{2s*}(h_s^i\cdot h_{n-2s}^j)$ is $2$-divisible for $i=0,...,2s-1$ and $j=0,...,4n-2s$.
\end{lm}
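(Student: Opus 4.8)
The plan is to factor $\pi'_{2s}$ through the squaring map and reduce the statement to a single $2$-divisibility property. Recall from subsection \ref{subsec:comp res} the squaring map $\psi'_s:\PP(V_s)\to\PP(V_{2s})$, $(q,f)\mapsto(q,f^2)$, and the multiplication map $\psi'_{2s,n-2s}:\PP(V_{2s})\times_{\cl{S}}\PP(V_{n-2s})\to\PP(V_n)$, $(q,F,g)\mapsto(q,Fg)$. These compose to $\pi'_{2s}=\psi'_{2s,n-2s}\circ(\psi'_s\times\mathrm{id})$. Since $h_{n-2s}$ is pulled back from the second factor and pushforward preserves $2$-divisibility, the projection formula and base change give $\pi'_{2s*}(h_s^i\cdot h_{n-2s}^j)=\psi'_{2s,n-2s*}\bigl(\psi'_{s*}(h_s^i)\cdot h_{n-2s}^j\bigr)$. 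Hence it suffices to prove that $\psi'_{s*}(h_s^i)$ is $2$-divisible for $i=0,\dots,2s-1$ (the cases where $h_{n-2s}^j=0$, i.e.\ $j>2(n-2s)$, being vacuous).

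\textbf{The key relation.} The canonical identification $f\otimes f\mapsto f^2$ yields an isomorphism $(\psi'_s)^*\OO_{\PP(V_{2s})}(-1)\simeq\OO_{\PP(V_s)}(-2)$, hence $(\psi'_s)^*h_{2s}=2h_s$ equivariantly. Combined with the projection formula this gives
$$2^i\,\psi'_{s*}(h_s^i)=\psi'_{s*}\bigl((\psi'_s)^*h_{2s}^i\bigr)=h_{2s}^i\cdot[Z_s],\qquad [Z_s]:=\psi'_{s*}(1),$$
where $Z_s=\psi'_s(\PP(V_s))$ is the locus of perfect squares. On a fibre over a point of $\cl{S}$ the map $\psi'_s$ is the standard squaring $\PP^{2s}\to\PP^{4s}$, whose image has degree $2^{2s}$ and thus fibrewise class $2^{2s}h_{2s}^{2s}$; this yields $\psi'_{s*}(h_s^i)=2^{2s-i}h_{2s}^{2s+i}$ on fibres. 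The coefficient $2^{2s-i}$ is even exactly for $i\le 2s-1$ and equals $1$ (the class of a point) for $i=2s$. This borderline case $i=2s$ is precisely the one erroneously included in \cite{FulViv}*{lemma $5.6$}, which is why the range here stops at $2s-1$.

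\textbf{Upgrading to the equivariant statement.} To promote the fibrewise computation I will work with $\ZZ_{(2)}$-coefficients, pass to the $T$-equivariant Chow ring, and restrict to the open $\cl{S}_{(0,0,2)}$. By lemma \ref{lm:Chow_restricted_projBundle} the kernel of the restriction $A^*_T(\PP(V_{2s}))\to A^*_T(\PP(V_{2s})|_{\cl{S}_{(0,0,2)}})$ is the ideal generated by $(0,0,2)\cdot\underline{\lambda}=2\lambda_3$, which lies in $2\cdot A^*_T(\PP(V_{2s}))$; therefore reduction modulo $2$ turns this restriction into an isomorphism, and a class is $2$-divisible if and only if its restriction to $\cl{S}_{(0,0,2)}$ is. Writing $\psi'_{s,0}$ for the restriction of $\psi'_s$, base change gives $j^*\psi'_{s*}(h_s^i)=\psi'_{s,0*}(h_s^i)$, and over $\cl{S}_{(0,0,2)}$ lemma \ref{lm:open_that_triv} trivialises $V_s$ and $V_{2s}$. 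It then remains to show that $[Z_s]|_{\cl{S}_{(0,0,2)}}=\psi'_{s,0*}(1)$ is divisible by $2^{2s}$: granting this, $2^i\psi'_{s,0*}(h_s^i)=h_{2s}^i[Z_s]|_{\cl{S}_{(0,0,2)}}$ is divisible by $2^{2s}\ge 2^{i+1}$, so $\psi'_{s,0*}(h_s^i)$ is divisible by $2^{2s-i}\ge 2$ for $i\le 2s-1$, as desired.

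\textbf{The main obstacle.} The crux is exactly this last divisibility: showing that the lower-order (in $h_{2s}$) equivariant correction terms of $[Z_s]$ do not destroy the factor $2^{2s}$ visible on fibres. I expect to settle it by $T$-equivariant localisation over $\cl{S}_{(0,0,2)}$: the $T$-fixed points of $\PP(V_s)$ are the monomial sections indexed by the basis of lemma \ref{lm:lin_alg_vspace_poly}, each a weight vector, and $\psi'_{s,0}$ carries such a vector to a weight vector of doubled weight, hence maps fixed points to fixed points. The relation $(\psi'_{s,0})^*h_{2s}=2h_s$ then forces each of the $2s$ normal Euler-class contributions at every fixed point to acquire a factor $2$, producing the global $2^{2s}$ in $\psi'_{s,0*}(1)$. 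Finally, since the $2$-divisor is produced from the $\GLt$-equivariant morphism $\psi'_s$ and is therefore $S_3$-invariant, it descends along the injection $A^*_{\GLt}\hookrightarrow A^*_T$, giving the statement in $A^*_{\GLt}(\PP(V_n))\otimes_{\ZZ}\ZZ_{(2)}$. It is precisely this use of genuinely $T$-invariant (non-$\PGLt$) cycles, made available by the presentation of theorem \ref{thm:presentation_Hg}, that replaces the flawed step of \cite{FulViv}.
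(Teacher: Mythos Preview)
Your reduction to the squaring map $\psi'_s$ and the identity $(\psi'_s)^*h_{2s}=2h_s$ are exactly the ingredients the paper uses. The gap is in the step after your ``main obstacle''. From $2^i\,\psi'_{s,0*}(h_s^i)=h_{2s}^i\,[Z_s]|_{\cl{S}_{(0,0,2)}}$ together with the (granted) divisibility $[Z_s]\in 2^{2s}\cdot A^*_T$ you conclude that $\psi'_{s,0*}(h_s^i)$ is divisible by $2^{2s-i}$. This cancellation of the factor $2^i$ is only legitimate if multiplication by $2$ is injective, and it is not: the relation $2c_3=0$ in $A^*_T(\cl{S})$ (and $2\lambda_3=0$ after restriction to $\cl{S}_{(0,0,2)}$) makes $2$ a zero-divisor in every ring you are working in. Concretely, $\psi'_{s,0*}(h_s^i)$ could differ from $2^{2s-i}h_{2s}^i\zeta$ by any $2^i$-torsion class, and such classes (multiples of $\lambda_3$) are not in general $2$-divisible. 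So even if your localisation sketch for $[Z_s]$ goes through, the argument does not close.

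The paper avoids this completely with a much shorter trick that never touches $[Z_s]$. The point is a degree bound: $\psi'_{s*}(h_s^i)$ has codimension $2s+i\le 4s-1$ for $i\le 2s-1$, so in the free $A^*_T(\cl{S})$-module $A^*_T(\PP(V_{2s}))$ with basis $1,h_{2s},\dots,h_{2s}^{4s}$ it is a polynomial in $h_{2s}$ of degree at most $4s-1$. Multiplying such a polynomial by $h_{2s}$ merely shifts coefficients (the relation $p_{2s}(h_{2s})=0$ is not invoked), hence \emph{reflects} $2$-divisibility. But the projection formula gives
\[
\psi'_{s*}(h_s^i)\cdot h_{2s}=\psi'_{s*}\bigl(h_s^i\cdot(\psi'_s)^*h_{2s}\bigr)=2\,\psi'_{s*}(h_s^{i+1}),
\]
which is visibly even; hence so is $\psi'_{s*}(h_s^i)$. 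This single line replaces your whole localisation programme, and it works precisely because it uses freeness of the projective-bundle ring rather than any cancellation by $2$. Note that it fails for $i=2s$ exactly because the degree then reaches $4s$ and multiplication by $h_{2s}$ does trigger the relation $p_{2s}$; this is the content of remark~\ref{rmk:not ext}.
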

\begin{proof}
	We start with the case $s=n/2$. Observe that $\pi'_{n*}(h_{n/2}^i)$, for $i=0,...,n-1$, is $2$-divisible if and only if $\pi'_{n*}(h_{n/2}^i)\cdot h_n$ is $2$-divisible. This follows from the uniqueness of the representation of cycles in $A^*_T(\PP(V_n))$ as polynomials in $h_n$ of degree less or equal to $2n$. We also have that $\pi^{'*}_n h_n=2h_{n/2}$, and from this we deduce that
	$$\pi'_{n*}(h_{n/2}^i)\cdot h_n=\pi'_{n*}((h_{n/2}^i)\cdot \pi^{'*}_n h_n)=2\pi'_{n*}h_{n/2}^{i+1}$$
	Now consider the general case, and observe that we have a factorization of $\pi'_{2s}$ as follows:
	$$ \PP(V_s)\times_{\cl{S}}\PP(V_{n-2s})\xrightarrow{\pi''_{2s}\times{\rm id}} \PP(V_{2s})\times_{\cl{S}}\PP(V_{n-2s})\xrightarrow{\pi_{2s,n-2s}} \PP(V_n) $$
	where $\pi''_{2s}((q,f))=(q,f^2)$. At the level of Chow rings, the first morphism coincides with
	$$ A^*_{\GLt}(\PP(V_s))\otimes_{A^*_{\GLt}(\cl{S})} A^*_{\GLt}(\PP(V_{n-2s})) \xrightarrow{\pi''_{2s*} \otimes {\rm id}_*} A^*_{\GLt}(\PP(V_{2s}))\otimes_{A^*_{\GLt}(\cl{S})} A^*_{\GLt}(\PP(V_{n-2s})) $$
	From the previous case we deduce that $$\pi'_{2s*}(h_s^i\cdot h_{n-2s}^j)=2\pi_{2s,n-2s*}( \pi''_{2s*}h^{i+1}_s\cdot h_{n-2s}^j) $$ which concludes the proof of the lemma.
\end{proof}
\begin{rmk}\label{rmk:not ext}
	We cannot extend the previous Lemma to the classes in which $h_s^{2s}$ appears. Indeed, for $s=n/2$, write $\pi'_{n*}h_{n/2}^{n}$ as a polynomial $\alpha_0 h_n^{2n}+\alpha_1 h_n^{2n-1}+...+\alpha_{2n}$. Then we have $$\pi'_{n*}h_{n/2}^{n}\cdot h_n=(\alpha_0 h_n^{2n}+\alpha_1 h_n^{2n}+...\alpha_{2n})\cdot h_n=\alpha_0 (h_n^{n+1}-p_n(h_n))+...+\alpha_{2n}h_n$$ and the first coefficient will always be $2$-divisible, no matter if $\alpha_0$ is even or not.
\end{rmk}
\begin{proof}[Proof of lemma \ref{lm:not highest degree classes are in I}]
	Consider again the commutative diagram
	$$ \xymatrix {
		A^*_{\GLt}(\PP(V_s)\times_{\cl{S}}\PP(V_{n-2s})) \ar[r] \ar[d]^*[@]{\cong} & A^*_{\GLt}(\PP(V_n)) \ar[d]^*[@]{\cong} \\
		A^*_{\PGLt}(\PP(1,2s)\times\PP(1,2n-4s)) \ar[r] \ar[d] & A^*_{\PGLt}(\PP(1,2n)) \ar[d] \\
		A^*_{\SLt}(\PP(1,2s)\times\PP(1,2n-4s)) \ar[r]  & A^*_{\SLt}(\PP(1,2n)) }$$
	where the three horizontal arrows are respectively the pushforward along the morphisms $\pi'_{2s}$, $\pi_{2s}$ and $\pi^{\SLt}_{2s}$. 
	
	Recall that the kernel of the two last vertical maps is $(c_3)$ and that, from \cite{EdiFul}, we already know that ${\rm im}(\pi^{\SLt}_{2s*})$ is contained in ${\rm im}(\pi^{\SLt}_{1*})$. This implies that there exists a cycle $\xi$ such that $\pi'_{2s*}(h_s^i\cdot h_{n-2s}^j) + c_3\cdot\xi$ is contained in ${\rm im}(\pi'_{1*})$. 
	
	Observe that this last ideal is contained in $(2)$ and, by lemma \ref{lm:2-divisibility}, so is $\pi'_{2s*}(h_s^i\cdot h_{n-2s}^j)$. From this we deduce that $c_3\xi$ is contained in $(c_3)$ and $(2)$, but $(c_3)\cap (2)=(0)$, thus $c_3\cdot\xi=0$ and consequently $\pi'_{2s*}(h_s^i\cdot h_{n-2s}^j)$ is contained in ${\rm im}(\pi'_{1*})$.
\end{proof}
So far we have proved that the ideal ${\rm im}(i_*)$ is equal to the sum of the ideal ${\rm im}(\pi'_{1*})$ and the ideal generated by the elements $\pi'_{2s*}(h_s^{2s}\cdot h_{n-2s}^j)$ for $s=1,...,n/2$ and $j=0,...,2n-4s$. We are in position to prove the main result of this subsection.
\begin{proof}[Proof of proposition \ref{pr:generators first reduction}]
	The key observation is that $\pi^{'*}_{2s}h_n=2h_s+h_{n-2s}$. This implies the following chain of equalities:
	\begin{align*}
	\pi'_{2s*}(h_s^{2s}\cdot h_{n-2s}^j)&=\pi'_{2s*}(h_s^{2s}\cdot h_{n-2s}^{j-1}\cdot(h_{n-2s}+2h_s-2h_s))\\
	&=\pi'_{2s*}(h_s^{2s}\cdot h_{n-2s}^{j-1}\cdot\pi^{'*}_{2s}h_n)-2\pi'_{2s*}(h_s^{2s+1}\cdot h_{n-2s}^{j-1})\\
	&=\pi'_{2s*}(h_s^{2s}\cdot h_{n-2s}^{j-1})\cdot h_n - 2\pi'_{2s*}((\alpha_0h_s^{2s}+...+\alpha_{2s})\cdot h_{n-2s}^{j-1}) 
	\end{align*}
	By lemma \ref{lm:not highest degree classes are in I} we see that the cycle $\pi'_{2s*}(h_s^{2s}\cdot h_{n-2s}^j)$ is in the ideal ${\rm im}(\pi'_{1*})+(\pi'_{2s*}(h_s^{2s}\cdot h_{n-2s}^{j-1}))$. Iterating this argument, we see that for every $j>0$ the cycle $\pi'_{2s*}(h_s^{2s}\cdot h_{n-2s}^j)$ is contained in the ideal ${\rm im}(\pi'_{1*})+(\pi'_{2s*}(h_s^{2s}\cdot 1))$. Applying this to every $s$ we conclude the proof of the lemma.
\end{proof}
\subsection{Interlude: computations in the $T$-equivariant setting}
Let $T\subset\GLt$ be the maximal subtorus of diagonal matrices.
In this subsection the fact that we work with the $T$-equivariant Chow ring will be essential. For the sake of clarity, the morphisms between $T$-equivariant Chow rings will be denoted with a $T$ in the apex. Moreover, we will keep using $\ZZ_{(2)}$-coefficients, so that every ring and ideal is assumed to be tensored with $\ZZ_{(2)}$, where not explicitly written. What we have found in the last subsection implies that
$${\rm im}(i_*^T)={\rm im}(\pi^{'T}_{1*})+(\pi^{'T}_{2s*}(h_s^{2s}\cdot 1),\pi^{'T}_{n*}(h_{n/2}^{n})),\quad s=1,...,n/2-1$$
Recall that we defined in the third section the $T$-invariant subvarieties $W_{n,r,l}$ of $\PP(V_n)$ whose points are the pairs $(q,X_0^rX_1^lf)$ (see definition \ref{def:Wnrl}).
\begin{prop}\label{pr:generators im i Z2 coefficients T-equivariant}
	We have ${\rm im}(i_*^T)=(2h_n^2-2n(n-1)c_2,4(n-2)h_n,[W_{n,2,0}])$ inside $A^*_T(\PP(V_n))$.
\end{prop}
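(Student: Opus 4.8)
The plan is to prove the two inclusions in turn, taking as input the reduction obtained just above this proposition,
$${\rm im}(i_*^T)={\rm im}(\pi^{'T}_{1*})+\bigl(\pi^{'T}_{2s*}(h_s^{2s}\cdot 1)\ (1\le s\le n/2-1),\ \pi^{'T}_{n*}(h_{n/2}^{n})\bigr),$$
together with Proposition \ref{pr:im_pi_1} (whose proof is itself a $T$-equivariant computation of $[\cl{Z}]$), which gives ${\rm im}(\pi^{'T}_{1*})=(2h_n^2-2n(n-1)c_2,\,4(n-2)h_n)$ inside $A^*_T(\PP(V_n))$. Granting these, the statement reduces to two claims: (a) $[W_{n,2,0}]\in{\rm im}(i_*^T)$, and (b) each of the remaining generators $\pi^{'T}_{2s*}(h_s^{2s}\cdot 1)$, $1\le s\le n/2$, lies in the ideal ${\rm im}(\pi^{'T}_{1*})+([W_{n,2,0}])$, where I treat $\pi^{'T}_{n*}(h_{n/2}^{n})$ as the case $s=n/2$. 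Claim (a) gives the inclusion $\supseteq$ and claim (b) gives $\subseteq$.

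Claim (a) is immediate from Lemma \ref{lm:pushforward_W_classes}: applying it with indices $(1,n-2)$, the map $\pi'_{1,n-2}$ is exactly $\pi'_2$, and since $W_{1,1,0}$ is the section $(q)\mapsto(q,X_0)$ the lemma reads $\pi^{'T}_{2*}([W_{1,1,0}\times_{\cl{S}}\PP(V_{n-2})])=[W_{n,2,0}]$. Hence $[W_{n,2,0}]\in{\rm im}(\pi^{'T}_{2*})\subseteq{\rm im}(i_*^T)$; combined with ${\rm im}(\pi^{'T}_{1*})\subseteq{\rm im}(i_*^T)$ (which holds because $\pi'_1$ is one of the stratum maps summing to $i_*$, Lemma \ref{lm:chow_envelope}) this settles $\supseteq$.

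For claim (b) the key device is to replace the top power $h_s^{2s}$ by a monic $W$-class of the same degree. By Lemma \ref{lm:Z_classes} the class $[W_{s,1,s-1}]$ is monic of degree $2s$ in $h_s$ (the correction term $2\lambda_3\xi$ has $h_s$-degree $<2s$, hence does not affect the top coefficient), so that $h_s^{2s}=[W_{s,1,s-1}]-\sum_{j<2s}a_jh_s^{j}$ with $a_j\in A^*_T(\cl{S})$. Pushing forward by $\pi^{'T}_{2s*}$ and using the projection formula over $\cl{S}$ gives
$$\pi^{'T}_{2s*}(h_s^{2s}\cdot 1)=\pi^{'T}_{2s*}([W_{s,1,s-1}]\cdot 1)-\sum_{j<2s}a_j\,\pi^{'T}_{2s*}(h_s^{j}\cdot 1).$$
The leading term is computed by Lemma \ref{lm:pushforward_W_classes} with indices $(s,n-2s)$, giving $\pi^{'T}_{2s*}([W_{s,1,s-1}]\cdot 1)=[W_{n,2,2s-2}]$, and Lemma \ref{lm:Z_classes_intersection} factors this as $[W_{n,2,0}]\cdot[W_{n,0,2s-2}]\in([W_{n,2,0}])$. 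For the correction terms, each $\pi^{'T}_{2s*}(h_s^{j}\cdot 1)$ with $j<2s$ is the restriction to $T$ of the $\GLt$-equivariant class $\pi'_{2s*}(h_s^{j}\cdot 1)$, which lies in ${\rm im}(\pi'_{1*})$ by Lemma \ref{lm:not highest degree classes are in I}; since restriction $A^*_{\GLt}\to A^*_T$ carries ${\rm im}(\pi'_{1*})$ into ${\rm im}(\pi^{'T}_{1*})$ and the latter is an ideal, multiplying by $a_j$ keeps each summand in ${\rm im}(\pi^{'T}_{1*})$. This proves (b) for all $s$, the case $s=n/2$ being identical with $\PP(V_{n-2s})=\PP(V_0)=\cl{S}$ and leading term $[W_{n,2,n-2}]=[W_{n,2,0}]\cdot[W_{n,0,n-2}]$, and together with (a) yields the desired equality.

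The step I expect to demand the most care is the bookkeeping of the last paragraph. One must verify that $[W_{s,1,s-1}]$ is genuinely monic with unit leading coefficient, so the substitution $h_s^{2s}\rightsquigarrow[W_{s,1,s-1}]$ is exact rather than merely up to $2$-torsion; that the lower-order coefficients $a_j$ really come from $A^*_T(\cl{S})$, so the projection formula applies verbatim; and, most delicately, that the passage between the $\GLt$- and $T$-equivariant settings is clean. Here it is essential that the classes $h_s^{j}\cdot 1$ and the morphism $\pi'_{2s}$ are $\GLt$-equivariant, so that Lemma \ref{lm:not highest degree classes are in I} (established $\GLt$-equivariantly through the $\SLt$ comparison) restricts correctly to $T$, whereas the $W$-classes, being only $T$-invariant, have no $\GLt$-analogue and must be manipulated entirely inside $A^*_T$.
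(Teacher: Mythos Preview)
Your proof is correct and follows essentially the same route as the paper: the paper first isolates Lemma \ref{lm:generators second reduction} (your claim (b) before applying Lemma \ref{lm:Z_classes_intersection}) and then reduces $([W_{n,2,2s-2}])_s$ to $([W_{n,2,0}])$ via Lemma \ref{lm:Z_classes_intersection}, exactly as you do. You merely fold the lemma and the proposition into a single argument and make the inclusion $\supseteq$ more explicit, but the key ingredients---monic expression of $[W_{s,1,s-1}]$, Lemma \ref{lm:pushforward_W_classes}, Lemma \ref{lm:Z_classes_intersection}, and Lemma \ref{lm:not highest degree classes are in I} for the lower-order terms---are identical.
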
 
All what we need in order to prove the proposition above is the following lemma:
\begin{lm}\label{lm:generators second reduction}
	We have ${\rm im}(i_*^T)={\rm im}(\pi^{'T}_{1*})+([W_{n,2,2s-2}])_{s=1,...,n/2}$.
\end{lm}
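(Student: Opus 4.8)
The plan is to start from the presentation of $\text{im}(i_*^T)$ recorded just above the statement, namely
$$ \text{im}(i_*^T) = \text{im}(\pi^{'T}_{1*}) + \bigl(\pi^{'T}_{2s*}(h_s^{2s}\cdot 1)\bigr)_{s=1,\dots,n/2}, $$
where the generator for $s=n/2$ is $\pi^{'T}_{n*}(h_{n/2}^{n})$, and to prove that it coincides with $\text{im}(\pi^{'T}_{1*})+([W_{n,2,2s-2}])_{s=1,\dots,n/2}$. Since both ideals contain $\text{im}(\pi^{'T}_{1*})$, it suffices to establish for each $s$ the congruence
$$ [W_{n,2,2s-2}] \equiv \pi^{'T}_{2s*}(h_s^{2s}\cdot 1) \pmod{\text{im}(\pi^{'T}_{1*})}, $$
which shows that the two families of generators span the same ideal modulo $\text{im}(\pi^{'T}_{1*})$.

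First I would realise the left-hand side as a pushforward of a $W$-class. Writing $\pi'_{2s}=\pi'_{s,n-2s}$ and using that $W_{s,1,s-1}\simeq\cl{S}$ is a section, the map $\pi'_{s,n-2s}$ restricts to a degree-one isomorphism of $W_{s,1,s-1}\times_{\cl{S}}\PP(V_{n-2s})$ onto $W_{n,2,2s-2}$; hence lemma \ref{lm:pushforward_W_classes} gives
$$ [W_{n,2,2s-2}] = \pi^{'T}_{2s*}\bigl([W_{s,1,s-1}\times_{\cl{S}}\PP(V_{n-2s})]\bigr) = \pi^{'T}_{2s*}\bigl({\rm pr}_1^*[W_{s,1,s-1}]\bigr). $$

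Next I would expand $[W_{s,1,s-1}]$ in powers of $h_s$. By lemma \ref{lm:Z_classes} this class is monic of degree $2s$ in $h_s$ with coefficients in $A^*_T(\cl{S})$, so $[W_{s,1,s-1}]=h_s^{2s}+\sum_{i=0}^{2s-1}a_i\,h_s^{i}$ with $a_i\in A^*_T(\cl{S})$. Pulling the $a_i$ through the projection formula yields
$$ [W_{n,2,2s-2}] = \pi^{'T}_{2s*}(h_s^{2s}\cdot 1) + \sum_{i=0}^{2s-1} a_i\,\pi^{'T}_{2s*}(h_s^{i}\cdot 1). $$
Each summand with $i<2s$ lies in $\text{im}(\pi^{'T}_{1*})$: indeed $\pi^{'T}_{2s*}(h_s^{i}\cdot 1)$ is the image under the restriction homomorphism $A^*_{\GLt}(\PP(V_n))\to A^*_T(\PP(V_n))$ of the class $\pi'_{2s*}(h_s^{i}\cdot h_{n-2s}^{0})$, which lies in $\text{im}(\pi'_{1*})$ by lemma \ref{lm:not highest degree classes are in I}; since this restriction carries $\text{im}(\pi'_{1*})$ into $\text{im}(\pi^{'T}_{1*})$ and the latter is an ideal, multiplying by $a_i$ keeps us inside it. This proves the displayed congruence, and therefore the asserted equality of ideals.

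The main obstacle is essentially bookkeeping: one has to keep the $\GLt$- and $T$-equivariant worlds straight, invoking lemma \ref{lm:not highest degree classes are in I} through the restriction map $A^*_{\GLt}\to A^*_T$ (legitimate because the cycles $h_s^{i}\cdot h_{n-2s}^{0}$ and the morphism $\pi'_{2s}$ are already defined $\GLt$-equivariantly), and to check that $\pi'_{s,n-2s}$ really restricts to a degree-one map onto $W_{n,2,2s-2}$ so that no multiplicity enters the first pushforward. Everything else is a direct consequence of the monicity in lemma \ref{lm:Z_classes} and the projection formula.
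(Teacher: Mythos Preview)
Your proof is correct and follows essentially the same approach as the paper's own argument: both use lemma~\ref{lm:pushforward_W_classes} to identify $[W_{n,2,2s-2}]$ with $\pi^{'T}_{2s*}([W_{s,1,s-1}]\times 1)$, expand $[W_{s,1,s-1}]$ as a monic polynomial in $h_s$ of degree $2s$ via lemma~\ref{lm:Z_classes}, and then absorb the lower-degree terms into $\mathrm{im}(\pi^{'T}_{1*})$ using lemma~\ref{lm:not highest degree classes are in I}. Your explicit handling of the passage from the $\GLt$- to the $T$-equivariant setting when invoking lemma~\ref{lm:not highest degree classes are in I} is a point the paper leaves implicit, but otherwise the arguments coincide.
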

\begin{proof}
	From lemma \ref{lm:Z_classes} we know that the cycle class $[W_{s,1,s-1}]$ contained in the $T$-equivariant Chow ring of $\PP(V_s)$ is a monic polynomial in $h_s$ of degree $2s$. We already know from lemma \ref{lm:not highest degree classes are in I} that the cycles $\pi^{'T}_{2s*}(h_s^{i}\cdot 1)$, for $i<2s$, are in ${\rm im}(\pi^{'T}_{1*})$.
	
	Combining this with our initial observation, we get
	$$ {\rm im}(\pi^{'T}_{1*})+\pi'_{2s*}(h_s^{2s}\cdot 1)\subset {\rm im}(\pi^{'T}_{1*})+(\pi^{'T}_{2s*}([W_{s,1,s-1}]\times 1)) $$
	because we have
	$$\pi^{'T}_{2s*}(h_s^{2s}\cdot 1)=\pi^{'T}_{2s*}([W_{s,1,s-1}]\times 1)-\sum\xi_i\pi^{'T}_{2s*}(h_s^{i}\cdot 1)$$
	for $i<2s$. The other inclusion is obvious because the ideal on the right is by construction contained in ${\rm im}(i^T_*)$, that coincides with the ideal on the left. Thus we actually proved that we have an equality. To finish the proof of the lemma, is enough to observe that, by lemma \ref{lm:pushforward_W_classes}, we have $\pi^{'T}_{2s*}([W_{s,1,s-1}]\times 1)=[W_{n,2,2s-2}]$.
\end{proof}
\begin{proof}[Proof of proposition \ref{pr:generators im i Z2 coefficients T-equivariant}]
	From lemma \ref{lm:Z_classes_intersection} we see that the ideal $([W_{n,2,2s-2}])$, where $s=1,...,n/2$, is actually generated by $[W_{n,2,0}]$.
	
	This plus lemma \ref{lm:generators second reduction} implies that ${\rm im}(i_*^{T})={\rm im}(\pi^{'T}_{1*})+([W_{n,2,0}])$. The fact that the inclusion ${\rm im}(\pi{'T}_{1*})\subset {\rm im}(i^T_*)$ is strict follows from the fact that ${\rm im}(\pi^{'T}_{1*})\subset (2)$ whereas $[W_{n,2,0}]$ is not $2$-divisible, as lemma \ref{lm:Z_classes} shows.
\end{proof}
\subsection{Computations with $\ZZ_{(2)}$-coefficients, part two}
We want to deduce from proposition \ref{pr:generators im i Z2 coefficients T-equivariant} what are the generators of ${\rm im}(i_*)\otimes_{\ZZ}\ZZ_{(2)}$. Again, all the ideals and the Chow rings will be assumed to be tensorized over $\ZZ$ with $\ZZ_{(2)}$, where not explicitly stated. Observe that the ideal ${\rm im}(i^T_*)$ is equal to the sum of ideals ${\rm im}(\pi^{'T}_{r*})$, where $r$ ranges from $1$ to $n$. In particular, proposition \ref{pr:generators first reduction} remains true also in the $T$-equivariant setting, so that we actually know that
$$ {\rm im}(i_*^T)={\rm im}(\pi_{1*}^{' T})+(\pi^{'T}_{2s*}(h_s^{2s}\cdot 1))_{s=1,...,n/2}$$
and from lemma \ref{lm:not highest degree classes are in I}, which also stays true in the $T$-equivariant setting, we know that the cycles of the form $\pi_{2s*}^{'T}(h_s^i\cdot h_{n-2s}^j)$ are in ${\rm im}(\pi^{'T}_{1*})$ for $i=0,...,2s-1$ and $j=0,...,4n-2s$.

We proved in proposition \ref{pr:generators im i Z2 coefficients T-equivariant} that ${\rm im}(i_*^T)$ is equal to ${\rm im}(\pi_{1*}^{' T})$ plus the ideal generated by the cycle class $[W_{n,2,0}]=\pi^{'T}_{2*}([W_{1,1,0}]\times 1)$. The equality
$$[W_{1,1,0}]=(h_1-\lambda_2)(h_1-\lambda_3)=h_1^2-(\lambda_2+\lambda_3)h_1+\lambda_2\lambda_3$$
is immediate to check, using the fact that $\PP(V_1)=\PP(2,1)\times\cl{S}$. Putting all together, we readily deduce that
$$ {\rm im}(i^T_*)={\rm im}(\pi^{'T}_{1*})+([W_{n,2,0}])\subset{\rm im}(\pi^{'T}_{1*})+(\pi^{'T}_{2*}(h_1^2\cdot 1))\subset {\rm im}(i^T_*) $$
which implies the following result:
\begin{cor}
	We have $ {\rm im}(i^T_*)={\rm im}(\pi^{'T}_{1*})+(\pi^{'T}_{2*}(h_1^2\cdot 1))$
\end{cor}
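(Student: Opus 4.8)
The plan is to prove the equality by squeezing the ideal $A := {\rm im}(i^T_*)$ between two copies of $B := {\rm im}(\pi^{'T}_{1*})+(\pi^{'T}_{2*}(h_1^2\cdot 1))$, establishing the two inclusions $B \subseteq A$ and $A \subseteq B$ separately.

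The inclusion $B \subseteq A$ is the easy one. As already observed, the ideal ${\rm im}(i^T_*)$ is the sum of all the ideals ${\rm im}(\pi^{'T}_{r*})$ (this is the $T$-equivariant version of lemma \ref{lm:chow_envelope}). In particular ${\rm im}(\pi^{'T}_{1*}) \subseteq A$ and the cycle $\pi^{'T}_{2*}(h_1^2\cdot 1) \in {\rm im}(\pi^{'T}_{2*}) \subseteq A$, so the ideal $B$ that they generate is contained in $A$.

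For the reverse inclusion $A \subseteq B$ I would start from proposition \ref{pr:generators im i Z2 coefficients T-equivariant}, which (combined with proposition \ref{pr:im_pi_1}) gives $A = {\rm im}(\pi^{'T}_{1*}) + ([W_{n,2,0}])$. Hence it is enough to show that the single class $[W_{n,2,0}]$ lies in $B$. Using $[W_{n,2,0}] = \pi^{'T}_{2*}([W_{1,1,0}]\times 1)$ from lemma \ref{lm:pushforward_W_classes} together with the identity $[W_{1,1,0}] = h_1^2 - (\lambda_2+\lambda_3)h_1 + \lambda_2\lambda_3$ valid on $\PP(V_1) = \PP(2,1)\times\cl{S}$, the projection formula (the $\lambda_i$ are pulled back from the base) yields
$$[W_{n,2,0}] = \pi^{'T}_{2*}(h_1^2\cdot 1) - (\lambda_2+\lambda_3)\,\pi^{'T}_{2*}(h_1\cdot 1) + \lambda_2\lambda_3\,\pi^{'T}_{2*}(1).$$
The first summand is by definition a generator of $B$; the remaining two are classes of the form $\pi^{'T}_{2*}(h_1^i\cdot 1)$ with $i=1$ and $i=0$, both obeying $i < 2s = 2$, so by lemma \ref{lm:not highest degree classes are in I} (which stays valid $T$-equivariantly) they lie in ${\rm im}(\pi^{'T}_{1*}) \subseteq B$. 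Therefore $[W_{n,2,0}] \in B$, which gives $A \subseteq B$ and completes the squeeze.

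Since all the ingredients have been prepared in the preceding lemmas, I do not expect a serious obstacle here. The only delicate point is that lemma \ref{lm:not highest degree classes are in I} applies strictly below the top degree, so the argument must isolate the leading term $\pi^{'T}_{2*}(h_1^2\cdot 1)$ as the genuine new generator and absorb every lower-order correction $\pi^{'T}_{2*}(h_1^i\cdot 1)$, $i<2$, into ${\rm im}(\pi^{'T}_{1*})$; it is exactly here that the non-$\GLt$-invariant cycle $[W_{1,1,0}]$ and the passage to the $T$-equivariant setting are indispensable.
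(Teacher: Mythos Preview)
Your proof is correct and follows essentially the same route as the paper: start from ${\rm im}(i^T_*)={\rm im}(\pi^{'T}_{1*})+([W_{n,2,0}])$, expand $[W_{1,1,0}]=h_1^2-(\lambda_2+\lambda_3)h_1+\lambda_2\lambda_3$ on $\PP(V_1)=\PP(2,1)\times\cl{S}$, push forward, and absorb the lower-order terms $\pi^{'T}_{2*}(h_1^i\cdot 1)$ for $i<2$ into ${\rm im}(\pi^{'T}_{1*})$ via lemma \ref{lm:not highest degree classes are in I}. The paper phrases the conclusion as the chain of inclusions ${\rm im}(i^T_*)={\rm im}(\pi^{'T}_{1*})+([W_{n,2,0}])\subset{\rm im}(\pi^{'T}_{1*})+(\pi^{'T}_{2*}(h_1^2\cdot 1))\subset{\rm im}(i^T_*)$, which is exactly your squeeze argument.
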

Let us recall the following result (\cite[Lemma $2.1$]{FulVis}), which enables us to pass from the $T$-equivariant setting to the $\GLt$-equivariant one:
\begin{prop}\label{pr:from T to G}
	Let $G$ be a special algebraic group and let $T\subset G$ be a maximal torus.
	\begin{enumerate}[label=(\roman*)]
		\item Let $X$ be a smooth $G$-space and let $I\subset A_G(X)$ be an ideal, then
		$$ IA_T(X)\cap A_G(X)=I $$ 
		\item Let $\{x_1,...,x_r\}$ be a set of variables and let $I\subset A_G[x_1,...,x_r]$ be an ideal, then
		$$ IA_T[x_1,...,x_r]\cap A_G[x_1,...,x_r]=I $$ 
	\end{enumerate}
\end{prop}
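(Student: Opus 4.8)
The plan is to deduce both statements from a single structural fact, valid precisely because $G$ is special: the restriction map $A_G(X)\to A_T(X)$ exhibits $A_T(X)$ as a free $A_G(X)$-module admitting $1$ as one of its basis elements. To obtain this, I would consider the flag bundle with fibre $G/T$ relating the two Borel constructions that compute the equivariant Chow rings (the $G/T$-bundle $X_T\to X_G$). Since $G$ is special, every $G$-torsor is Zariski-locally trivial, hence so is this flag bundle. The fibre $G/T$ carries a Bruhat affine cell decomposition indexed by the Weyl group $W$, so $A^*(G/T)$ is free of rank $|W|$ over $\ZZ$ with the unit class spanning its degree-zero part; for the groups relevant here ($\GLt$, $\Gm$ and their products) $G/T$ is even an iterated projective bundle, so one may invoke the projective bundle formula directly. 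The Leray--Hirsch argument then yields an $A_G(X)$-basis $e_1=1,e_2,\dots,e_m$ of $A_T(X)$, with $m=|W|$, and identifies the image of $A_G(X)$ with the summand $A_G(X)\cdot e_1$.

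Granting the decomposition $A_T(X)=\bigoplus_{j}A_G(X)e_j$ with $e_1=1$, statement (i) is a short module-theoretic computation. The inclusion $I\subseteq IA_T(X)\cap A_G(X)$ is immediate. For the converse, note that any element of $IA_T(X)$ can be written $\sum_k i_k t_k$ with $i_k\in I$ and $t_k=\sum_j a_{kj}e_j\in A_T(X)$, $a_{kj}\in A_G(X)$; expanding, $\sum_k i_k t_k=\sum_j\bigl(\sum_k i_k a_{kj}\bigr)e_j$, and each coefficient $\sum_k i_k a_{kj}$ lies in $I$ since $I$ is an ideal of $A_G(X)$. Thus $IA_T(X)=\bigoplus_j I e_j$. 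Let $p_1\colon A_T(X)\to A_G(X)$ be the $A_G(X)$-linear projection onto the $e_1$-summand. If $x\in A_G(X)\cap IA_T(X)$ then $x=x e_1=p_1(x)$, while $p_1(x)\in p_1(IA_T(X))=I$; hence $x\in I$, proving (i).

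For (ii) I would run the identical argument in the purely algebraic setting. The case $X={\rm Spec}(k)$ of the structural fact gives that $A_T$ is free over $A_G$ with $1$ among a basis $\{e_j\}$, and therefore $A_T[x_1,\dots,x_r]=A_G[x_1,\dots,x_r]\otimes_{A_G}A_T=\bigoplus_j A_G[x_1,\dots,x_r]\,e_j$ is free over $A_G[x_1,\dots,x_r]$ with the same basis. Projecting onto the $e_1$-summand yields $IA_T[x_1,\dots,x_r]\cap A_G[x_1,\dots,x_r]=I$ exactly as before.

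The main obstacle is the structural input itself, namely the integral freeness of $A_T(X)$ over $A_G(X)$ with $1$ in a basis. This is exactly where specialness of $G$ is indispensable: for a non-special group such as $\PGLt$ the associated flag bundle is only \'etale-locally trivial, and the Leray--Hirsch splitting — in particular the clean identification of $A_G(X)$ with a direct summand — may fail with integral coefficients. One also needs that the Bruhat cells furnish a genuine $\ZZ$-basis of $A^*(G/T)$ containing the unit class, which is classical and transparent for iterated projective bundles. Once these inputs are secured, the remainder of the proof is formal.
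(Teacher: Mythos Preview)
Your argument is correct and is precisely the standard proof of this fact. The key structural input---that $A_T(X)$ is free over $A_G(X)$ with $1$ among a basis, obtained by realising the $G/T$-bundle $X_T\to X_G$ as (an affine bundle over) an iterated projective bundle when $G$ is special---is exactly what is needed, and your module-theoretic deduction of (i) and (ii) from it is clean and complete.

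Note, however, that the paper does not actually prove this proposition: it merely quotes it from \cite[Lemma~2.1]{FulVis} and uses it as a black box. So there is no ``paper's own proof'' to compare against here; you have supplied a full argument where the paper is content to cite. Your proof is, in essence, the one that appears in the cited reference.
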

Then we see that the ideal ${\rm im}(i_*)$ inside $A^*_{\GLt}(\PP(V_n))$ is equal to the symmetric elements of the image of $i_*^T$ inside the $T$-equivariant Chow ring of $\PP(V_n)$. 
The corollary above gives explicit generators for the ideal $ {\rm im}(i^T_*)$ that are also symmetric in the $\lambda_i$. From this we immediately deduce proposition \ref{pr:generators im i with Z-two coefficients} stated at the beginning of the section.
\section{The Chow ring of $\cl{H}_g$: end of the computation}\label{sec:chow 3}
\noindent
In this section we finish the computation of $A^*(\cl{H}_g)$. Recall that in corollary \ref{cor:generators im i} we proved that
$${\rm im}(i_*)=(2h_n^2-2n(n-1)c_2,4(n-2)h_n,\pi'_{2*}(h_1^2\times [\PP(V_{n-2})]))$$
In order to obtain the relations inside the Chow ring of $\cl{H}_g$, we need to pull back the generators of the ideal above along the $\Gm$-torsor $p:V_n\setminus \sigma_0 \to \PP(V_n)$, where $\sigma_0$ denotes the image of the zero section $\cl{S}\to V_n$. 

We have already computed some of these relations in the third section (see corollary \ref{cor:chow_ring_Hg_intermediate}). Let us call $I$ the ideal appearing in that corollary, that is
$$I=(4(2g+1)\tau,8\tau^2-2g(g+1)c_2,2c_3)$$
By construction, it coincides with the pullback of the ideal ${\rm im}(\pi'_{1*})$. Unfortunately, the ideal ${\rm im}(i_*)$ is not equal to ${\rm im}(\pi'_{1*})$ inside $A^*_{\GLt}(\PP(V_n))$, so we can't conclude that $I$ is the whole ideal of relations, though this claim may still be true, because when pulling back the generator of ${\rm im}(i_*)$ not in ${\rm im}(\pi'_{1*})$ we may obtain a cycle contained in $I$. 

The fact that we do not know an explicit expression for the last generator, namely $\pi'_{2*}(h_1^2\times [\PP(V_{n-2})])$, prevents us from finishing the computation in a direct way.

Recall that in the previous section we deduced that
$${\rm im}(i^T_*)={\rm im}(\pi^{'T}_{1*})+([W_{n,2,0}]) $$
inside the $T$-equivariant Chow ring of $\PP(V_n)$, and observe that the relations inside the Chow ring of $\cl{H}_g$ are exactly the pullback of these symmetric elements along the $\Gm$-torsor $p:(V_n\setminus \sigma_0) \to \PP(V_n)$. 

If $\xi$ is in ${\rm im}(i_*)$, seeing it as an element of the $T$-equivariant Chow ring through the embedding $A^*_{\GLt}(\PP(V_n))\hookrightarrow A^*_T(\PP(V_n))$, then we have $\xi=\alpha\cdot \pi'_{1*}\xi + \beta\cdot [W_{n,2,0}]$. To proceed with our discussion, we need the following technical result:
\begin{lm}\label{lm:criterion for the pullback to be in I}
	Let $\xi=\alpha_0h_n^{2n}+\alpha_1h_n^{2n-1}+...+\alpha_{2n}$ be a cycle in ${\rm im}(i_*)$, considered as an ideal in the $\GLt$-equivariant Chow ring of $\PP(V_n)$, and suppose that $\alpha_{2n}$ is $2$-divisible. Then $p^*\xi$ is in $I$. 
\end{lm}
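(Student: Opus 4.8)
The plan is to reduce everything to the single structural fact that \emph{every generator of $I$ is divisible by $2$}. Writing $J_0:=(2(2g+1)\tau,\,4\tau^2-g(g+1)c_2,\,c_3)$ in the polynomial ring $\ZZ[\tau,c_2,c_3]$, one has $I=2J_0$, since $(2a,2b,2c)=2(a,b,c)$ for any three elements. Because $\ZZ[\tau,c_2,c_3]$ is a domain, this gives the clean equivalence: for an element $y$ divisible by $2$, we have $y\in I$ if and only if $\tfrac{1}{2}y\in J_0$. Now $p^{*}$ is the substitution $h_n\mapsto -2\tau$ (the kernel of $A^{*}_{\GLt\times\Gm}(U)\to A^{*}_{\GLt\times\Gm}(U')$ is $(h_n+2\tau)$), so $p^{*}\xi=\alpha_{2n}+\sum_{j\ge 1}(-2)^{j}\alpha_{2n-j}\tau^{j}$. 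Every summand with $j\ge 1$ is divisible by $2$, and $\alpha_{2n}$ is divisible by $2$ by hypothesis; hence $p^{*}\xi$ is even and the whole problem becomes showing $\zeta:=\tfrac{1}{2}p^{*}\xi\in J_0$. (Conversely $I\subseteq 2\,\ZZ[\tau,c_2,c_3]$ shows the $2$-divisibility of $\alpha_{2n}$ is \emph{forced}: it is exactly the obvious obstruction to $p^{*}\xi\in I$, which is why the hypothesis is natural.)

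First I would dispose of the odd part. By Proposition \ref{pr:generators_im i_Zhalf coefficients}, ${\rm im}(i_*)\otimes\ZZ[\tfrac{1}{2}]={\rm im}(\pi'_{1*})\otimes\ZZ[\tfrac{1}{2}]$, so over $\ZZ[\tfrac{1}{2}]$ we may assume $\xi\in{\rm im}(\pi'_{1*})$; the pullbacks of the two generators of ${\rm im}(\pi'_{1*})$ computed in Proposition \ref{pr:im_pi_1} are, up to sign, the first two generators of $I$, so $p^{*}\xi\in I\otimes\ZZ[\tfrac{1}{2}]$ and therefore $\zeta\in J_0\otimes\ZZ[\tfrac{1}{2}]$. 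Since $\zeta\in J_0$ can be tested after tensoring with $\ZZ[\tfrac{1}{2}]$ and with $\ZZ_{(2)}$ separately (local--global on each graded piece of the finitely generated abelian group $\ZZ[\tau,c_2,c_3]/J_0$), it remains to prove $\zeta\in J_0\otimes\ZZ_{(2)}$.

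For the $2$-local statement I would use Corollary \ref{cor:generators im i} to write $\xi=\eta+\beta\,[W_{n,2,0}]$ with $\eta\in{\rm im}(\pi'_{1*})$, so that modulo $J_0\otimes\ZZ_{(2)}$ one has $\zeta\equiv\tfrac{1}{2}p^{*}(\beta\,[W_{n,2,0}])$. Over $\ZZ_{(2)}$ the integer $2g+1$ is a unit, hence $J_0\otimes\ZZ_{(2)}=(2\tau,\,4\tau^2-g(g+1)c_2,\,c_3)$; in particular $(2\tau)\subseteq J_0$ absorbs every term of $\zeta$ of $\tau$-degree $\ge 2$, so only the $\tau^{0}$ and $\tau^{1}$ coefficients of $\zeta$, namely $\tfrac{1}{2}\alpha_{2n}$ and $-\alpha_{2n-1}$, have to be controlled. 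Here I would feed in the explicit shape of $[W_{n,2,0}]$ from Lemma \ref{lm:Z_classes}: it is monic of degree $4$ in $h_n$, its $h_n^{0}$-coefficient is a multiple of $c_2^{2}$ and its $h_n^{1}$-coefficient is a multiple of $c_3$, and---crucially---since $n=g+1$ is \emph{even} the relevant products of weights $\underline{k}\cdot\underline{\lambda}$ acquire an extra factor of $2$. Combined with the fact that the generators of ${\rm im}(\pi'_{1*})$ have even coefficients, this should force $\tfrac{1}{2}\alpha_{2n}$ into $(g(g+1)c_2,\,c_3)$ and $\alpha_{2n-1}$ into $(2,\,c_3)$, i.e. $\zeta\in J_0\otimes\ZZ_{(2)}$.

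The hard part will be precisely this last $2$-local bookkeeping. The obstruction is that $\ZZ[\tau,c_2,c_3]/I$ has genuine $2$-power torsion (for instance $\tau$ has additive order $4(2g+1)$, and classes such as $(2g+1)\tau^{2}$ have order $2$), so one cannot simply clear a power of $2$ from the $\ZZ[\tfrac{1}{2}]$-statement; the $2$-divisibility hypothesis on $\alpha_{2n}$ must be used honestly, and the parity of $g+1$ is exactly what makes the $\tau^{0}$- and $\tau^{1}$-coefficients of $\tfrac{1}{2}p^{*}[W_{n,2,0}]$ land inside $J_0$ rather than in its index-$2$ overgroup.
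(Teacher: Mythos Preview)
Your reduction $I=2J_0$ and the $\ZZ[\tfrac12]$ half are fine, but the $\ZZ_{(2)}$ half does not go through as written, and the paper's argument is in fact quite different.

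There are two intertwined problems in your $2$-local step. First, you invoke Corollary~\ref{cor:generators im i}, which is a statement in the $\GLt$-equivariant ring and whose third generator is $\pi'_{2*}(h_1^2\times 1)$, but you then switch to $[W_{n,2,0}]$, which is only a $T$-equivariant class and is not symmetric. Lemma~\ref{lm:Z_classes} describes $[W_{n,2,0}]$, not $\pi'_{2*}(h_1^2\times 1)$, and its coefficients lie in $\ZZ[\lambda_1,\lambda_2,\lambda_3]/(c_1,2c_3)$; in particular the claim ``its $h_n^{0}$-coefficient is a multiple of $c_2^{2}$'' is not meaningful for this non-symmetric class. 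Second, even if you repair this and use the $\GLt$-class $G:=\pi'_{2*}(h_1^2\times 1)$ (for which the degree-count does force $G=g_0h_n^{4}+ac_2h_n^{2}+bc_3h_n+dc_2^{2}$), the argument still does not close. Writing $\xi=\eta+cG$ with $\eta\in{\rm im}(\pi'_{1*})$, the constant term is $\alpha_{2n}=\eta_0+c_0dc_2^{2}$ with $\eta_0\in(2g(g+1)c_2)$; the hypothesis only gives that $c_0d$ is even, whereas to land $\tfrac12\alpha_{2n}$ in $(g(g+1)c_2,c_3)=(2^{a}c_2,c_3)$ over $\ZZ_{(2)}$ (with $2^{a}\Vert g{+}1$) you would need $2^{a+1}\mid c_0d$. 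The observation that $n$ is even gives a single factor of $2$ in the constant term of $[W_{n,2,0}]$, not the $2^{a+1}$ you need, and nothing in your outline produces the missing powers of $2$.

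The paper avoids all of this by a different, much shorter route: the $\SLt$/$\PGLt$ comparison (the vertical maps in the diagram with $\SLt$ have kernel $(c_3)$, and over $\SLt$ one already knows ${\rm im}(\pi^{\SLt}_{2s*})\subset{\rm im}(\pi^{\SLt}_{1*})$) gives, \emph{in the $\GLt$-ring itself}, a decomposition $\xi=\pi'_{1*}\zeta+c_3\cdot\eta$. Using $2c_3=0$ one may take $\eta$ with only odd coefficients; then the hypothesis that $\alpha_{2n}$ is even forces the constant term of $\eta$ to vanish, so $\eta=h_n\gamma$ and $p^{*}(c_3\eta)=-2\tau c_3\,p^{*}\gamma=0$. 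Thus $p^{*}\xi=p^{*}\pi'_{1*}\zeta\in I$. The crucial input you are missing is precisely this $\SLt$ step: it is what replaces your unknown generator $G$ by the very explicit $c_3\cdot\eta$, after which the relation $2c_3=0$ does all the work.
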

The proof of the lemma is postponed to the end of the section. Write $\xi$ as a polynomial in $h_n$ of degree less or equal to $2n$. If we prove that $\xi$, written in this form and evaluated in $h_n=0$, is $2$-divisible, then by lemma \ref{lm:criterion for the pullback to be in I} we can conclude that $p^*\xi$ must be in $I$, thus $I$ is the whole ideal of relations.

We already know that every element in the image of $\pi'_{1*}$ is $2$-divisible, so that we only need to check that $\beta\cdot [W_{n,2,0}]$, seen as a polynomial in $h_n$ of degree less than or equal to $2n$ and evaluated in $h_n=0$, is $2$-divisible.

Clearly, it is enough to prove this claim when $\beta=h_n^d$, where $d=0,...,2n$. For matters of clarity, let us work with $\ZZ/2\ZZ$-coefficients, so that what we need to prove is that $\beta\cdot [W_{n,2,0}]$, seen as a polynomial in $h_n$ of degree less or equal to $2n$ and evaluated in $h_n=0$, is equal to $0$. 

Write $[W_{n,2,0}]$ as $h_n^4+\omega_1\cdot h_n^3+...+\omega_4$.  Observe that, with these coefficients, we have $h_n^{2n+1}=0$. This implies that the product $h_n^d\cdot [W_{n,2,0}]$ is equal to
$$ \chi_{4}h_n^{d+4}+\chi_3 \omega_1 h_n^{d+3}+...+\chi_0 \omega_4 h_n^d $$
where $\chi_j=0$ for $j+d>2n$, and equal to $1$ otherwise. In particular, for $d>0$ the evaluation of this polynomial in $h_n=0$ is zero. In other terms, we have shown that $h_n^d\cdot[W_{n,2,0}]$, written as a polynomial in $h_n$ of degree less or equal to $2n$ and evaluated in $h_n=0$, is $2$-divisible for $d>0$. 

Now we only need to prove that $[W_{n,2,0}]$ itself has this property.
Recall from lemma \ref{lm:Z_classes} that
$$ [W_{n,2,0}]=\prod (h_n-\underline{k}\cdot\underline{\lambda})+2\lambda_3\xi \text{ for }\underline{k}\text{ s.t. } |\underline{k}|=n\text{, }k_2<2\text{, }k_0<2$$
The $2$-divisibility of $[W_{n,2,0}]$ when evaluated in $h_n=0$ is then equivalent to studying the $2$-divisibility of the product $\prod (-\underline{k}\cdot\underline{\lambda})$, where $|\underline{k}|=n\text{, }k_2<2\text{, }k_0<2$.

Observe that there are only four triples $\underline{k}$ that verify the conditions above, namely $(0,n,0)$, $(0,n-1,1)$, $(1,n-1,0)$ and $(1,n-2,1)$. This implies that the product above is a multiple of $n\lambda_2$, thus it is $2$-divisible because $n=g+1$ is even. We now give a proof of the technical lemma.
\begin{proof}[Proof of lemma \ref{lm:criterion for the pullback to be in I}]
	We can assume that $\xi$ is not in ${\rm im}(\pi'_{1*})$, otherwise the conclusion is obvious. Moreover, we can also assume that $\xi$ is in ${\rm im}(\pi'_{2s*})$, because we have proved in the last section that ${\rm im}(\pi'_{2s+1*})$ is contained in ${\rm im}(\pi'_{1*})$. Consider again the commutative diagram
	$$\xymatrix {
		A^*_{\GLt}(\PP(V_s)\times_{\cl{S}}\PP(V_{n-2s})) \ar[r] \ar[d]^*[@]{\cong} & A^*_{\GLt}(\PP(V_n)) \ar[d]^*[@]{\cong} \\
		A^*_{\PGLt}(\PP(1,2s)\times\PP(1,2n-4s)) \ar[r] \ar[d] & A^*_{\PGLt}(\PP(1,2n)) \ar[d] \\
		A^*_{\SLt}(\PP(1,2s)\times\PP(1,2n-4s)) \ar[r]  & A^*_{\SLt}(\PP(1,2n)) }$$
	Then it must be true that $\xi=\pi'_{1*}\zeta + c_3\cdot\eta$: indeed we know from \cite{EdiFul} that the image of the last horizontal map is contained in the image of $\pi^{\SLt}_{1*}$, and that the kernel of the last two vertical maps, which are surjective, is generated as an ideal by $c_3$.
	
	Let us briefly comment on the surjectivity of these maps: first recall that $\PP(1,2m)$ is the projectivization of a $\PGLt$-representation, hence $A^*_{\PGLt}(\PP(1,2m))$ is generated by the hyperplane section and the Chern classes $c_2$ and $c_3$. Similarly, the ring $A^*_{\PGLt}(\PP(1,2m)\times\PP(1,2m'))$ is generated by the hyperplane sections of the two factors plus $c_2$ and $c_3$. 
	
	Moreover, it is well known that $A^*_{\SLt}$ is generated by $c_2$: therefore $A^*_{\SLt}(\PP(1,2m)\times \PP(1,2m'))$ is generated by $c_2$ and the hyperplane sections, and the surjectivity of the last two vertical maps in the diagram above follows, as all the projective spaces appearing parametrise forms of even degree.
	
	Observe that we can assume that $\eta$, seen as a polynomial in $h_n$,  has only odd coefficients: indeed, if we write $\eta=\eta'+2\eta''$ then
	$$ c_3\cdot\eta=c_3\cdot\eta'+2c_3\cdot\eta''=c_3\cdot\eta' $$
	We deduce then that $\eta$ must be equal to $h_n\cdot\gamma$, because by hypothesis when we evaluate $\xi$ in $h_n=0$ we must obtain something even, and $\eta$ has only odd coefficients. In the end, we have that $\xi=\pi'_{1*}\zeta + h_n\cdot c_3\cdot \gamma$. We now pull back $\xi$ to $V_n\setminus\sigma_0$, which we saw to be equivalent to substituting $h_n$ with $-2\tau$, so we get:
	$$ p^*\xi=p^*\pi'_{1*}\zeta - 2\tau\cdot c_3 \cdot p^*\gamma=p^*\pi'_{1*}\zeta $$
	where in the last equality we used the relation $2c_3=0$. This concludes the proof of the lemma.
\end{proof}
Putting all together, we have finally proved the
\begin{thm}\label{thm:Chow ring of Hg}
	We have $ A^*(\cl{H}_g)=\mathbb{Z}[\tau,c_2,c_3]/(4(2g+1)\tau,8\tau^2-2g(g+1)c_2,2c_3)$.
\end{thm}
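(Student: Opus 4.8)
The plan is to assemble the two main ingredients already established: the presentation $\cl{H}_g\simeq A^*_{\GLt\times\Gm}(U')$ from Theorem \ref{thm:presentation_Hg}, and the explicit generators of the image of the boundary pushforward from Corollary \ref{cor:generators im i}. By the localization sequence together with the computation \eqref{eq:chow S}, the ring $A^*_{\GLt\times\Gm}(V_n)$ is $\ZZ[\tau,c_2,c_3]/(2c_3)$, and $A^*(\cl{H}_g)$ is its quotient by the ideal obtained by pulling ${\rm im}(i_*)$ back along the $\Gm$-torsor $p:V_n\setminus\sigma_0\to\PP(V_n)$, i.e. by the substitution $h_n\mapsto -2\tau$. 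Corollary \ref{cor:chow_ring_Hg_intermediate} already exhibits a surjection onto this quotient from $\ZZ[\tau,c_2,c_3]/I$, where $I=(4(2g+1)\tau,8\tau^2-2g(g+1)c_2,2c_3)$ is precisely the pullback of ${\rm im}(\pi'_{1*})$. Hence the entire theorem reduces to a single claim: the one extra generator $\pi'_{2*}(h_1^2\times[\PP(V_{n-2})])$ of ${\rm im}(i_*)$, together with all of its $A^*_{\GLt}(\PP(V_n))$-multiples, also pulls back into $I$, so that $I$ is the full ideal of relations.

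To prove this I would pass to the $T$-equivariant picture, where Proposition \ref{pr:generators im i Z2 coefficients T-equivariant} identifies the extra generator with the class $[W_{n,2,0}]$, and then invoke the criterion of Lemma \ref{lm:criterion for the pullback to be in I}: if $\xi\in{\rm im}(i_*)$ is written as a polynomial in $h_n$ of degree at most $2n$ whose constant term is $2$-divisible, then $p^*\xi\in I$. Since the generators of ${\rm im}(\pi'_{1*})$ are already $2$-divisible, it suffices to check that every cycle $\beta\cdot[W_{n,2,0}]$, reduced modulo the monic relation $p_n(h_n)$ to degree $\le 2n$ in $h_n$ and then evaluated at $h_n=0$, is even. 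By linearity one reduces to the monomials $\beta=h_n^d$ for $d=0,\dots,2n$.

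The two cases split cleanly. For $d>0$, the relation $h_n^{2n+1}=0$ in the quotient forces the reduced form of $h_n^d\cdot[W_{n,2,0}]$ to have vanishing constant term, so its value at $h_n=0$ is $0$ and is trivially $2$-divisible. For $d=0$ I would use the explicit formula of Lemma \ref{lm:Z_classes}, namely $[W_{n,2,0}]=\prod(h_n-\underline{k}\cdot\underline{\lambda})+2\lambda_3\xi$, where $\underline{k}$ ranges over the four triples $(0,n,0),(0,n-1,1),(1,n-1,0),(1,n-2,1)$. Evaluating at $h_n=0$ gives $\prod(-\underline{k}\cdot\underline{\lambda})$ up to the manifestly even term $2\lambda_3\xi$, and the factor from $\underline{k}=(0,n,0)$ is $-n\lambda_2$, so the product is a multiple of $n\lambda_2=(g+1)\lambda_2$, which is even since $g$ is odd. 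Feeding this back through Lemma \ref{lm:criterion for the pullback to be in I} gives $p^*[W_{n,2,0}]\in I$, and combined with the case $d>0$ this shows that $I$ is the entire ideal of relations.

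The main obstacle is exactly this $2$-primary bookkeeping. Corollary \ref{cor:generators im i} shows that the inclusion ${\rm im}(\pi'_{1*})\subset{\rm im}(i_*)$ is \emph{strict}, so a priori one would expect a genuinely new relation in $A^*(\cl{H}_g)$ beyond those of $I$; this is precisely the point at which the argument of \cite{FulViv} breaks down. The subtlety — and the content of the theorem — is that although the extra generator is nonzero in $A^*_{\GLt}(\PP(V_n))$, it collapses after the torsor pullback $h_n\mapsto -2\tau$, thanks to the relation $2c_3=0$ together with the parity of $g+1$. Establishing the criterion Lemma \ref{lm:criterion for the pullback to be in I} itself, which controls the $c_3$-component of an arbitrary element of ${\rm im}(i_*)$ via the $\SLt$-equivariant comparison and the relation $2c_3=0$, is the technical heart; once it is available, the divisibility computation above closes the proof.
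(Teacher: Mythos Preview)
Your proposal is correct and follows essentially the same route as the paper: reduce via Corollary~\ref{cor:chow_ring_Hg_intermediate} to showing that the extra generator and its multiples pull back into $I$, pass to the $T$-equivariant setting where the extra generator is $[W_{n,2,0}]$, apply the criterion of Lemma~\ref{lm:criterion for the pullback to be in I}, and verify the $2$-divisibility of the constant term case-by-case via the mod~$2$ relation $h_n^{2n+1}=0$ (for $d>0$) and the explicit factor $n\lambda_2$ from Lemma~\ref{lm:Z_classes} (for $d=0$). Your summary of the proof of Lemma~\ref{lm:criterion for the pullback to be in I} via the $\SLt$-comparison and $2c_3=0$ also matches the paper's argument.
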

We want to give a geometrical interpretation of the generators of $A^*(\cl{H}_g)$. Recall that in order to do the computations of the last three sections we used the isomorphism $[U'/\GLt\times\Gm]$ obtained in theorem \ref{thm:presentation_Hg}, where $U'$ is the open subscheme of $V_{g+1}$ whose points are pairs $(q,f)$ such that $Q$ and $F$ intersect transversely. 

We also showed, at the end of section \ref{sec:new pres}, that the rank $4$ vector bundle over $\cl{H}_g$ associated to the $\GLt\times\Gm$-torsor $U'$ is the vector bundle $\cl{E}\oplus \cl{L}$, where $\cl{L}$ is the line bundle over $\cl{H}_g$ functorially defined as
$$\cl{L}((\pi:C\to S,\iota))=\pi_*\omega_{C/S}^{\otimes \frac{g+1}{2}}\left(\frac{1-g}{2}W\right) $$
and $\cl{E}$ is the rank $3$ vector bundle over $\cl{H}_g$ functorially defined as
$$\cl{E}((\pi:C\to S,\iota))=\pi_*\omega_{C/S}^{-1}\left(W\right)$$
Then by construction the generator $\tau$ coincides with $c_1(\cl{L})$ and $c_2$ and $c_3$ coincide respectively with $c_2(\cl{E})$ and $c_3(\cl{E})$, whereas $c_1(\cl{E})=0$. This analysis agrees with the one made in the last section of \cite{FulViv}.

\begin{bibdiv}
	\begin{biblist}
		
		\bib{ArsVis}{article}{			
			author={Arsie, A.},
			author={Vistoli, A.},			
			title={Stacks of cyclic covers of projective spaces},			
			journal={Compos. Math.},
			volume={140},			
			date={2004},			
			number={3}	}
		\bib{Dil}{thesis}{
			author={Di Lorenzo, A.}
			type={Ph.D. Thesis}
			organization={Scuola Normale Superiore, Pisa}
			date={2019}
		}			
		\bib{EdiFul08}{article}{			
			author={Edidin, D.},
			author={Fulghesu, D.},
			title={The integral Chow ring of the stack of at most $1$-nodal rational curves},			
			journal={Comm. Algebra},
			volume={36},			
			date={2008}
			number={2}	}
		\bib{EdiFul}{article}{			
			author={Edidin, D.},
			author={Fulghesu, D.},
			title={The integral Chow ring of the stack of hyperelliptic curves of even genus},			
			journal={Math. Res. Lett.},
			volume={16},			
			date={2009}
			number={1}	}
		\bib{EdiGra}{article}{			
			author={Edidin, D.},
			author={Graham, W.},
			title={Equivariant intersection theory},			
			journal={Invent. Math.},
			volume={131},			
			date={1998}
			number={3}	}
		\bib{FulVis}{article}{			
			author={Fulghesu, D.},
			author={Vistoli, A.},
			title={The Chow ring of the stack of smooth plane cubics},			
			journal={arXiv:1606.06052},
			}
		
		\bib{FulViv}{article}{			
			author={Fulghesu, D.},
			author={Viviani, F.},			
			title={The Chow ring of the stack of cyclic covers of the projective line},			
			journal={Ann. Inst. Fourier (Grenoble)},
			volume={61},			
			date={2011},			
			number={6}	}
		\bib{Gil}{article}{			
			author={Gillet, H.},
			title={Intersection theory on algebraic stacks and $Q$-varieties},			
			journal={J. Pure Appl. Alg.},
			volume={34},			
			date={1984}
			number={2-3}	}
		\bib{Har}{book}{
			author={Hartshorne, Robin},
			title={Algebraic geometry},
			note={Graduate Texts in Mathematics, No. 52},
			publisher={Springer-Verlag, New York-Heidelberg},
			date={1977},
			pages={xvi+496},
		}
		\bib{KnuKle}{article}{			
			author={Knud, L.},
			author={Kleiman, S. L.},
			title={Basics on families of hyperelliptic curves},			
			journal={Compositio Math.},
			volume={38},			
			date={1979}
			number={1}	}
		\bib{Mum}{article}{			
			author={Mumford, D.},
			title={Towards an enumerative geometry of moduli spaces of curves},			
			journal={Arithmetic and geometry},
			volume={II},			
			date={1983}
				}
		\bib{Tot}{article}{			
			author={Totaro, B.},
			title={The Chow ring of a classifying space},			
			journal={Algebraic K-theory (Seattle, WA)},
			date={1998}
				}
		\bib{Vis89}{article}{			
			author={Vistoli, A.},
			title={Intersection theory on algebraic stacks and on their moduli spaces},			
			journal={Invent. Math.},
			volume={97},			
			date={1989}
			number={3}	}
		\bib{Vis}{article}{			
			author={Vistoli, A.},
			title={The Chow ring of $\cl{M}_2$. Appendix to "Equivariant intersection theory"},			
			journal={Invent. Math.},
			volume={131},			
			date={1998}
			number={3}	}
	\end{biblist}
\end{bibdiv}
\end{document}